\DeclareFontFamily{OT1}{pzc}{}
\DeclareFontShape{OT1}{pzc}{m}{it}{<-> s * [1.10] pzcmi7t}{}
\DeclareMathAlphabet{\mathpzc}{OT1}{pzc}{m}{it}
\DeclareMathAlphabet{\mathcal}{OMS}{cmsy}{m}{n} 
\definecolor{DarkPurple}{rgb}{0.40,0.0,0.20}
\newcommand{\G}{\mathcal{G}}
\newcommand{\Cfull}{C^*}
\newcommand{\Cred}{C_{r}^{*}}
\newcommand{\C}{\mathbb{C}}
\newcommand{\R}{\mathbb{R}}
\newcommand{\N}{\mathbb{N}}
\DeclareMathOperator{\supp}{supp}
\newtheorem{lemma}{Lemma}[section]
\newtheorem{theorem}[lemma]{Theorem}
\newtheorem{proposition}[lemma]{Proposition}
\theoremstyle{definition}
\newtheorem{definition}[lemma]{Definition}
\newtheorem{example}[lemma]{Example}
\newtheorem{remark}[lemma]{Remark}
\title[]{Exotic $\rm C^*$-completions of Étale Groupoids}
\date{}
\author[1]{Mathias Palmstrøm}
\address{Department of Mathematical Sciences, Faculty of Information Technology and Electrical Engineering, NTNU -- Norwegian University of Science and Technology, Trondheim, Norway}
\email{mathias.palmstrom@ntnu.no}
\keywords{}
\numberwithin{equation}{section} 
\begin{document}

\renewcommand{\thefootnote}{}
\footnotetext{
	\textit{MSC 2020 classification: 46L05; 46L55; 22A22 }}

\maketitle

\begin{abstract}
	We generalize the ideal completions of countable discrete groups, as introduced by Brown and Guentner, to second-countable Hausdorff étale groupoids. Specifically, to every pair consisting of an algebraic ideal in the algebra of bounded Borel functions on the groupoid and a non-empty family of quasi-invariant measures on the unit space, we construct a $\rm C^*$-algebra in a way which naturally encapsulates the constructions of the full and reduced groupoid $\rm C^*$-algebras. We investigate the connection between these constructions and the Haagerup property, and use the construction to show the existence of many exotic groupoid $\rm C^*$-algebras for certain classes of groupoids.
\end{abstract}

\section{Introduction}
There are two natural $\rm C^*$-algebras one can associate to an étale groupoid $\G$, the full and the reduced groupoid $\rm C^*$-algebras, often denoted by $C^\ast (\G)$ and $C_{r}^{\ast}(\G)$ respectively. For these, there is a canonical surjection $$ C^{\ast}(\G) \twoheadrightarrow C_{r}^{\ast}(\G) ,$$ induced by the identity on $C_c (\G)$, and the groupoid is said to have the weak containment property when this surjection is an isomorphism \cite[Definition 6.1]{Delaroche:AmenabilityExactnessAndWeakContainmentPropertyForGroupoids}. If $\G$ does not have the weak containment property, then a natural question is whether or not there exist groupoid $\rm C^*$-algebras lying ``in between'' these two; that is, if there exists a $\rm C^*$-norm $\Vert\cdot\Vert_{\mathrm{e}}$ on $C_c (\G)$ which dominates the reduced norm and differs from both the reduced and full norms. In this case, the $\rm C^*$-completion $C_{\mathrm{e}}^{\ast} (\G)$ is called an exotic groupoid $\rm C^*$-algebra, and there are canonical surjections $$ C^{\ast}(\G) \twoheadrightarrow C_{\mathrm{e}}^{\ast} (\G) \twoheadrightarrow C_{r}^{\ast}(\G) ,$$ which are not injective. When $\G$ is a group, this question goes all the way back to \cite{Eymard:FourierAlgebraOfALocallyCompactGroup} wherein such intermediate $\rm C^*$-algebras were considered. However, it was not until Brown and Guentner introduced the so-called ideal completions in \cite{BrownandGuentner:NewC*-completions} that they were systematically studied. Brown and Guentner's construction takes an algebraic ideal $D \trianglelefteq \ell^\infty (\G)$ and defines an associated $\rm C^*$-algebra $C_{D}^{\ast}(\G)$ defined to be the separation and completion of the group ring $\C [\G]$ under the $\rm C^*$-seminorm $\|f\|_D := \sup_\pi \|\pi (f)\|$, where the suprema is taken over all unitary representations of $\G$ which have ``sufficiently many'' matrix coefficients lying inside $D$. They investigated the connection of these completions with amenability, the Haagerup property and property (T), among other things. Regarding the existence of exotic group $\rm C^*$-algebras, they showed that there exists $p \in (2, \infty)$ such that $C_{\ell^p}^{\ast} (\mathbb{F}_d)$ is an exotic group $\rm C^*$-algebra; here $\mathbb{F}_d$ is the non-abelian free group on $2 \leq d < \infty$ generators. Using Brown and Guentner's construction, this result was improved by Okayasu in \cite{Okayasu:FreeGroupC*-algebrasAssociatedWithLP} where he characterized the positive definite functions on $\mathbb{F}_d$ which extend to $C_{\ell^p }^{\ast} (\mathbb{F}_d)$, obtaining as a corollary that the group $\rm C^*$-algebras $C_{\ell^p }^{\ast}(\mathbb{F}_d) $, for $p \in (2, \infty)$, are all distinct. In \cite{Wiersma:ConstructionsOfExoticGroupC*Algebras}, Wiersma extended this result to discrete groups containing a non-abelian free group as a subgroup. The construction introduced by Brown and Guentner generalizes readily to locally compact groups, and we refer the reader to \cite{BekkaKainuthLauSchlichting:OnCstarAlgebarsAssociatedWithLCGroups,  deLaatEtAl:GroupC*AlgebrasOfLocallyCompactGroupsActingOnTrees, KaliszewskiLandstadQuigg:ExoticGroupCstarAlgebrasInNonCommutativeDuality, deLaatAndSiebenand:ExoticGroupC*AlgebrasOfSimpleLieGroupsWithRealRankOne, RuanAndWiersma:OnExoticGroupC*Algebras, SameiAndWiersma:ExoticC*AlgebrasOfGeometricGroups} for results regarding exotic group $\rm C^*$-algebras in the locally compact case. There has also been work on exotic quantum groups (see \cite{BrannanAndRuan:L_PRepresentationsOfDiscreteQuantumGroups, KyedAndSoltan:PropertyTAndExoticQuantumGroupNorms}) and exotic crossed product $\rm C^*$-algebras (see \cite{AntoninietAl:StrongNovikovConjecture..., Baum:ExpanderExactCrossedProductsAndTheBaumConnesConjecture, BussEtAl:ExoticCrossedProducts, BussetAL:ExoticCrossedProductsAndTheBaumConnesConjecture, ExelPittsZarikian:ExoticIdealsInFreeTransformationGroupC*Algebras}), the latter of which turn out to be related to the Baum-Connes and Novikov conjectures. 

For groupoids in general, exotic groupoid $\rm C^*$-algebras have been studied in \cite{BruceAndLi:AlgebraicActionsI.C*-algebrasAndGroupoids, ChristensenAndNeshveyev:IsotropyFibersOfIdealsInGroupoidCstarAlgebras, ChristensenAndNeshveyev:NonExoticCompletionsOfTheGroupAlgebrasOfIsotropyGroups, KwasniewskiAndMeyer:Aperiodicity:theAlmostExtensionPropertyAndUniquenssOfPseudoExpectations, KwasniewskiAndMeyer:EssentialCrossedProductsForInverseSemigroupActions, NeshveyevAndSchwartz:NonHausdorffÉtaleGroupoidsAndC*Algebras, Palmstrom:ExoticCstarAlgsAssocWithDoubleGroupoids}. 
In \cite{BruceAndLi:AlgebraicActionsI.C*-algebrasAndGroupoids}, Bruce and Li considered groupoids associated to algebraic actions of semigroups on groups, and found sufficient conditions for when the concrete $\rm C^*$-algebra generated by the Koopman representation for the action together with the left regular representation of the group is an exotic groupoid $\rm C^*$-algebra.
In \cite{ChristensenAndNeshveyev:IsotropyFibersOfIdealsInGroupoidCstarAlgebras}, Christensen and Neshveyev related the ideal structure of a (exotic) groupoid $\rm C^*$-algebra to that of the associated (exotic) isotropy group $\rm C^*$-algebras. In \cite{Exel:OnKumjiansC*DiagonalsAndTheOpaqueIdeal}, Exel characterized the exotic twisted groupoid $\rm C^*$-algebras of principal twisted étale groupoids as pairs of inclusions $(A,B)$ where $B$ is a $\rm C^*$-algebra and $A$ is a closed commutative $*$-subalgebra of $B$ which is regular and satisfies the extension property. A similar result is given in \cite[Corollary 3.11.7]{ExelAndPitts:CharacterizingGroupoidC*AlgebrasOfNonHausdorffEtaleGroupoids}. Note that the definition of an exotic groupoid $\rm C^*$-algebra there includes the full and reduced $\rm C^*$-algebras. 

Apart the work mentioned above, not much can be found in the literature on exotic groupoid $\rm C^*$-algebras, particularly regarding the question of their existence. This paper aims to be a starting point of investigation into the existence of such groupoid $\rm C^*$-algebras, and is organized as follows: \cref{sec: Prelim} reviews the necessary definitions and results on Hilbert bundles, groupoids and their unitary representations. In \cref{sec: constructing grouopid c*-algebras}, we generalize the construction of ideal completions from \cite{BrownandGuentner:NewC*-completions} to second-countable Hausdorff étale groupoids, which at the very least provides a point of attack in constructing exotic groupoid $\rm C^*$-algebras. \cref{sec: Haagerup property and positive definite functions} is dedicated to investigating the relationship between this construction and the notion of Haagerup property for étale groupoids defined in \cite{KwasniewskiLiSkalski:TheHaagerupPropertyForTwistedGrouopidDynamicalSystems} (see also \cite{Delaroche:HaagerupPropertyForMeasuredGroupoids} and \cite{Tu:LaConjectureDeBaumConnes}). For that, we rely much on the technology for positive definite functions developed by Ramsay and Walter in \cite{RamsayAndWalter:FourierStieltjesAlgebrasOfLocallyCompactGroupoids}. Finally, in \cref{sec: exotic groupoid C*-algebras associated to hyperbolic groupoids}, we use this construction to exhibit some examples of exotic groupoid $\rm C^*$-algebras associated with certain metrically hyperbolic groupoids (see \cref{def: hyperbolic groupoids}). The construction has also been used by the author in \cite{Palmstrom:ExoticCstarAlgsAssocWithDoubleGroupoids} to prove existence of exotic groupoid $\rm C^*$-algebras associated with double groupoids, and we believe that it can be applied to other classes of groupoids, providing yet further examples.

\section*{Acknowledgments}
I am very grateful to Matthew Wiersma for pointing out to me how \cref{thm: there exists extoic ideal completions of a hyperbolic groupoid with appropriate growth conditions} could be improved to its present form. I would also like to thank Eduard Ortega for some insightful conversations and many helpful comments on the first draft of this paper.

\section{Preliminaries} \label{sec: Prelim} 
In this section, we recall the basic notions and results regarding Borel Hilbert bundles, étale groupoids and their unitary representations. There are many excellent references for these subjects, see for example \cite{Dixmier:VonNeumannAlgebras, Muhly:CoordinatesInOperatorAlgebras, Paterson:GroupoidsAndTheirOperatorAlgebras, Renault:AGroupoidApproach, Sims:EtaleGroupoids, Williams:AToolKitForGroupoidCstarAlgebras}. For  \cref{subsec: Borel Hilbert Bundles} and \cref{subsec: unitary repr. of gpds}, we will mostly follow \cite[Appendix F]{Williams:CrossedProductsOfC*Algebras} and \cite[Chapter 3]{Muhly:CoordinatesInOperatorAlgebras}.

\subsection{Borel Hilbert bundles} \label{subsec: Borel Hilbert Bundles}
Throughout this subsection, $X$ will always denote a standard Borel space. 
Let $\{ H(x) \}_{x \in X}$ be a collection of Hilbert spaces indexed by $X$. The associated \emph{Hilbert bundle} is the set $$X \ast H := \{ (x,h) \colon h \in H(x) \}. $$ Let $p \colon X \ast H \to X$ be the map given by $p(x,h) = x ,$ for $(x,h) \in X \ast H$. A \emph{section} of the Hilbert bundle $X \ast H$ is a map $\tilde{f} \colon X \to X \ast H$ such that $p \circ \tilde{f} (x) = x$, for all $x \in X$, and is necessarily of the form $\tilde{f}(x) = (x, f(x))$, for some unique map $f$ with $f(x) \in H(x)$, for all $x \in X$. We will identify the section $\tilde{f}$ with the map $f$.

\begin{definition} \label{def: Borel Hilbert bundle (standard)}
	Let $\{H(x)\}_{x \in X}$ be a collection of Hilbert spaces indexed over $X$, and let $p \colon X \ast H \to X$ be the map as above. We say that the Hilbert bundle $X \ast H$ is a \emph{Borel Hilbert bundle} if $X \ast H$ has a standard Borel structure such that 
	
	\begin{itemize}
		\item[(i)] $E \subset X$ is Borel if and only if $p^{-1}(E)$ is Borel in $X \ast H$;
		\item[(ii)] there is a sequence of sections $\{f_n \}_n$, called a \emph{fundamental sequence}, such that
		\begin{itemize}
			\item[(a)] for each $n$, the map $\hat{f}_n \colon X \ast H \to \C$, given by $\hat{f}_n (x,h) = \langle f_n (x) , h \rangle_{H(x)}$, are Borel;
			\item[(b)] for each $n, m$, the map $x \mapsto \langle f_n (x) , f_m (x) \rangle_{H(x)}$, is Borel;
			\item[(c)] the sequence of functions $\{\hat{f}_n\}_n$ together with $p$, separate points of $X \ast H$.
		\end{itemize}
	\end{itemize} 
\end{definition}

The following lemma proves the usefulness of the fundamental sequence in determining if a section is Borel or not.
\begin{lemma} \label{lem: measurability of maps into Borel hilbert bundle} \cite[Proposition 3.34]{Williams:AToolKitForGroupoidCstarAlgebras}
	Let $X \ast H$ be a Borel Hilbert bundle with fundamental sequence $\{f_n \}_n$. Then 
	\begin{itemize}
		\item[(i)] a section $f \colon X \to X \ast H$ is Borel if and only if the map $x \mapsto \langle f(x) , f_n (x) \rangle_{H(x)}$ is Borel for all $n$;
		\item[(ii)] if $f,g$ are Borel sections, then $x \mapsto \langle f(x), g(x) \rangle_{H(x)}$ is Borel; in particular, $x \mapsto \| f(x) \|_{H(x)}$ is Borel for all Borel sections $f$. 
	\end{itemize} 
\end{lemma}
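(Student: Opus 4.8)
The plan is to prove Lemma~\ref{lem: measurability of maps into Borel hilbert bundle} by reducing the two assertions to the defining properties of the fundamental sequence in Definition~\ref{def: Borel Hilbert bundle (standard)}, and I expect the main obstacle to be the forward direction of (i), where I must recover Borel measurability of a general section from its inner products against the countably many sections $f_n$.

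For part (i), the backward implication is the substantive one. Starting from a section $f$ such that $x \mapsto \langle f(x), f_n(x) \rangle_{H(x)}$ is Borel for every $n$, I would show $f$ is a Borel map into $X \ast H$ by checking that the preimage of every generating Borel set is Borel. Since the standard Borel structure on $X \ast H$ is generated, via condition~(ii)(c), by the map $p$ together with the functions $\hat f_n(x,h) = \langle f_n(x), h \rangle_{H(x)}$, it suffices to verify that $p \circ f$ and each composite $x \mapsto \hat f_n(f(x)) = \langle f_n(x), f(x) \rangle_{H(x)}$ are Borel. The first is trivial because $f$ is a section, so $p \circ f = \id_X$; the second is exactly the complex conjugate of the hypothesis and hence Borel. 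To make this rigorous I would invoke the standard fact (for standard Borel spaces) that a map into a space whose Borel structure is generated by a countable separating family of functions is Borel precisely when each of those functions composed with the map is Borel. The forward implication is immediate: if $f$ is Borel, then by condition~(ii)(a) each $\hat f_n$ is Borel on $X \ast H$, so $x \mapsto \langle f_n(x), f(x) \rangle_{H(x)} = \hat f_n(f(x))$ is a composition of Borel maps, and conjugating gives the claim.

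For part (ii), the key tool is a polarization-type argument combined with a pointwise approximation of $g(x)$ by the fundamental sequence. The idea is that, because $\{\hat f_n\}_n$ separates points and lies dense in an appropriate sense on each fibre, one can express $\langle f(x), g(x) \rangle_{H(x)}$ as a pointwise limit (or a Borel combination) of the Borel functions $x \mapsto \langle f(x), f_n(x) \rangle_{H(x)}$ from part~(i). Concretely, I would first use a Gram--Schmidt procedure applied fibrewise to the fundamental sequence to produce a Borel-measurable orthonormal family $\{e_n\}_n$ of sections whose span is dense in each fibre $H(x)$; measurability of the Gram--Schmidt outputs follows from part~(i) together with the Borel measurability of the Gram matrix entries guaranteed by condition~(ii)(b). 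Expanding $g(x) = \sum_n \langle g(x), e_n(x) \rangle_{H(x)} \, e_n(x)$ in this basis and using continuity of the inner product yields $\langle f(x), g(x) \rangle_{H(x)} = \sum_n \overline{\langle g(x), e_n(x) \rangle_{H(x)}} \, \langle f(x), e_n(x) \rangle_{H(x)}$, a pointwise-convergent series each of whose terms is Borel in $x$ by part~(i). Hence the sum is Borel. Taking $g = f$ and extracting the square root shows $x \mapsto \|f(x)\|_{H(x)}$ is Borel, giving the final assertion.

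The delicate point throughout is measurability of the Gram--Schmidt orthonormalization: one must handle fibres where the vectors $f_n(x)$ become linearly dependent, so the indices surviving the orthonormalization vary measurably with $x$. I would address this by partitioning $X$ into the Borel sets on which the rank of the initial segments of the Gram matrix takes each fixed value --- these sets are Borel because the Gram determinants are Borel functions of $x$ by condition~(ii)(b) --- and defining the orthonormal sections piecewise on this partition. Since this is the standard measurable selection argument for Borel fields of Hilbert spaces, I would cite the corresponding result in \cite[Appendix F]{Williams:CrossedProductsOfC*Algebras} or \cite{Dixmier:VonNeumannAlgebras} rather than reproduce it, and focus the written proof on the reduction in part~(i) and the basis expansion in part~(ii).
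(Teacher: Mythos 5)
Your proposal is correct and takes essentially the same route as the proof the paper defers to: the paper gives no argument of its own for \cref{lem: measurability of maps into Borel hilbert bundle}, citing \cite[Proposition 3.34]{DanaWilliams:AToolKitforGroupoidC*-algebras}, and that proof likewise handles part (i) by the unicity fact that a countable separating family of Borel functions on a standard Borel space generates its Borel $\sigma$-algebra (so Borelness of $p \circ f$ and of the maps $x \mapsto \langle f_n(x), f(x) \rangle_{H(x)}$ suffices), and part (ii) by passing to a measurably orthonormalized fundamental sequence and expanding $\langle f(x), g(x) \rangle_{H(x)}$ via Parseval. The two delicate points you single out --- justifying ``generated by'' via the unicity theorem, and the rank-partition argument making the fibrewise Gram--Schmidt Borel --- are exactly the ingredients of that cited proof (and the orthonormalization is the same ``well known'' fact the paper invokes immediately after the lemma), so nothing is missing.
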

Property (ii) (c) of \cref{def: Borel Hilbert bundle (standard)} implies that for each $x \in X$, the sequence $\{f_n (x)\}_n$ spans a dense subspace of $H(x)$. It is well-known that one may ``orthonormalize'' the fundamental sequence $\{f_n\}_n$ in such a way that the resulting sequence $\{\tilde{f}_n\}_n$ is still a fundamental sequence for the Hilbert bundle, but also with the property that for each $x \in X$, the nonzero vectors in $\{\tilde{f}_n (x)\}_n$ form an orthonormal basis for $H(x)$. In that case, we shall refer to the fundamental sequence $\{\tilde{f}_n\}_n$ as \emph{orthonormal}. 

The following result is \cite[Proposition 3.2]{Muhly:CoordinatesInOperatorAlgebras} and follows from arguments in \cite[p. 265]{Ramsay:VirtualGroupsAndGroupActions} and \cite[Section 1]{Ramsay:NonTransitiveQuasiOrbitsInMackeysAnalysisOfGroupExtensiond}. We record it here for future reference.
\begin{proposition} \cite[Proposition 3.2]{Muhly:CoordinatesInOperatorAlgebras} \label{prop: family of sections defining standard Borel structure on the Hilbert bundle PaperA}
	Let $X$ be a standard Borel space, and let $X \ast H$ be an associated Hilbert bundle. Suppose we are given a sequence of sections $\{f_n\}_n$ for $X \ast H$ satisfying (ii) (b) and (ii) (c) in \cref{def: Borel Hilbert bundle (standard)}. Then there is a unique Borel structure on $X \ast H$ for which it becomes a Borel Hilbert bundle with $\{f_n\}_n$ as a fundamental sequence.
\end{proposition}

Given a Borel Hilbert bundle $X \ast H$, we denote by $S(X \ast H)$ the vector space of all its Borel sections. If $\mu$ is a measure on $X$, then we may construct a Hilbert space associated to the Borel Hilbert bundle, called the \emph{direct integral}, given as follows: The space $$ \mathcal{H} := \left\lbrace  f \in S(X \ast H) \mid x \mapsto \|f(x)\|_{H(x)}^{2} \text{ is $\mu$-integrable} \right\rbrace  ,$$ is a pre-inner product space with the pre-inner product given by: $$ \langle f , g \rangle = \int_{X} \langle f (x) , g(x) \rangle_{H(x)} \, d \mu(x) .$$ Upon taking the quotient of norm-zero elements, we obtain the direct integral, denoted by $\int_{X}^{\oplus} H(x) \, d \mu(x)$.

The following proposition will be used frequently throughout the document. By a bounded Borel section of a Borel Hilbert bundle $X \ast H$, we mean a Borel section $\xi \colon X \to X \ast H$ such that the map $x \mapsto \|\xi(x)\|_{H(x)}$ is bounded.
\begin{lemma} \label{lem: dense subspace given by fundamental sequence} \cite[p.172]{Dixmier:VonNeumannAlgebras}
	Let $X \ast H$ be a Borel Hilbert bundle over a standard Borel space X with fundamental sequence $\{f_n\}_n$, and let $\mu$ denote a measure on X. Suppose that each Borel section $f_n$ in the fundamental sequence is bounded. Let $L_{\mathrm{fin}}^{\infty}(X, \mu)$ denote the set of bounded Borel functions which vanish off a set of finite measure. Given $a \in L_{\mathrm{fin}}^{\infty}(X, \mu)$ and $f$ a Borel section, denote by $T_a (f)$ the Borel section $T_a (f)(x) = a(x) f(x)$. Then the linear span of the set $$\{ T_a (f_n) \mid a \in L_{\mathrm{fin}}^{\infty}(X, \mu) ; n \in \mathbb{N}\},$$ is dense in the direct integral $\int_{X}^{\oplus} H(x) \, d\mu(x)$.
\end{lemma}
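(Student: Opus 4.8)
The plan is to show that the orthogonal complement of
$V := \spn\{ T_a(f_n) : a \in L^{\infty}(X,\mu),\ n \in \N \}$
inside the direct integral $\mathcal{H} := \int_X^{\oplus} H(x)\, d\mu(x)$ is trivial; since $\mathcal{H}$ is a Hilbert space, this is equivalent to density of $V$. First I would record that $V$ genuinely sits inside $\mathcal{H}$: if $a \in L^{\infty}(X,\mu)$ and $f_n$ is a bounded Borel section with $\|f_n(x)\|_{H(x)} \le M$, then $\|T_a(f_n)(x)\|_{H(x)} \le \|a\|_\infty M$, so $\int_X \|T_a(f_n)(x)\|_{H(x)}^2\, d\mu(x) \le \|a\|_\infty^2 M^2 < \infty$ because $\mu$ is a probability measure. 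This is exactly where the boundedness hypothesis on the fundamental sequence, together with finiteness of $\mu$, enters.

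Next, suppose $g \in \mathcal{H}$ is orthogonal to every element of $V$. For each fixed $n$ and each $a \in L^{\infty}(X,\mu)$, expanding the inner product of $\mathcal{H}$ yields
\[
0 = \langle g, T_a(f_n)\rangle = \int_X \overline{a(x)}\, \langle g(x), f_n(x)\rangle_{H(x)}\, d\mu(x).
\]
The coefficient function $\varphi_n(x) := \langle g(x), f_n(x)\rangle_{H(x)}$ is Borel by \cref{lem: measurability of maps into Borel hilbert bundle}\,(ii), and it lies in $L^1(X,\mu)$: by Cauchy--Schwarz $|\varphi_n(x)| \le M \|g(x)\|_{H(x)}$, while $x \mapsto \|g(x)\|_{H(x)}$ belongs to $L^2(X,\mu) \subset L^1(X,\mu)$ since $g \in \mathcal{H}$ and $\mu$ is finite. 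As the displayed identity holds for \emph{every} $a \in L^{\infty}(X,\mu)$, the nondegeneracy of the $L^\infty$--$L^1$ pairing forces $\varphi_n = 0$ $\mu$-almost everywhere; concretely one tests against $a(x) = \overline{\varphi_n(x)}/|\varphi_n(x)|$ on $\{\varphi_n \neq 0\}$ (and $a = 0$ elsewhere), which gives $\int_X |\varphi_n|\, d\mu = 0$.

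Finally, since there are only countably many indices $n$, I would intersect the corresponding conull sets to obtain a single $\mu$-conull set $X_0$ on which $\langle g(x), f_n(x)\rangle_{H(x)} = 0$ holds for all $n$ simultaneously. Property (ii)\,(c) of \cref{def: Borel Hilbert bundle (standard)} guarantees that $\{f_n(x)\}_n$ spans a dense subspace of $H(x)$ for each $x$, so $g(x) = 0$ for every $x \in X_0$, i.e. $g = 0$ in $\mathcal{H}$. This proves $V^{\perp} = \{0\}$ and hence the density of $V$. The main point requiring care is not a single hard step but the bookkeeping of measurability and integrability that legitimizes the passage from the integral orthogonality relations to pointwise almost-everywhere vanishing, together with the assembly of the countably many null sets into one; I note in particular that no orthonormalization of the fundamental sequence is required here, only its fibrewise density.
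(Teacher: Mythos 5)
Your proof is correct and follows the standard orthocomplement argument, which is essentially the proof in Dixmier that the paper cites for this lemma (the paper itself gives no proof, only the reference to \cite[p.172]{Dixmier:VonNeumannAlgebras}). One cosmetic slip: under your own convention $\langle g, T_a(f_n)\rangle = \int_X \overline{a}\,\varphi_n\, d\mu$, the test function should be $a = \varphi_n/|\varphi_n|$ on $\{\varphi_n \neq 0\}$, so that $\overline{a}\,\varphi_n = |\varphi_n|$; your choice $a = \overline{\varphi_n}/|\varphi_n|$ gives $\int_X \varphi_n^2/|\varphi_n|\, d\mu = 0$, which does not directly yield vanishing --- although the nondegeneracy of the $L^{\infty}$--$L^1$ pairing that you invoke settles the point either way.
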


Suppose that for each $i \in \N$, we have a Borel Hilbert bundle $X \ast H_i$. For each $x \in X$, let $H(x)$ denote the Hilbert space direct sum $H(x) := \bigoplus_{i = 1}^{\infty} H_i (x)$. It follows by \cref{prop: family of sections defining standard Borel structure on the Hilbert bundle PaperA} that $X \ast H$ is in a natural way another Borel Hilbert bundle with the following fundamental sequence: If $\{f_{n,m}\}_{n}$ is a fundamental sequence of the Hilbert bundle $X \ast H_m$, for $m \in \N$, then identifying these as sections in $X \ast H$ yields a fundamental sequence $\{ f_{n,m} \}_{m,n = 1}^{\infty}$ for $X \ast H$. The proof of the next result is straightforward and so we omit it.

\begin{lemma} \label{lem: direct integrals commutes with direct sums} 
	With the setup as in the previous paragraph, the map $$ U \colon \bigoplus_{i = 1}^{\infty} \int_{X}^{\oplus} H_i (x) \, d\mu (x) \to \int_{X}^{\oplus} \bigoplus_{i = 1}^{\infty} H_i (x) \, d \mu(x) ,$$ given by $ U(\xi_i)_i (x) = (\xi_i (x))_{i \in \N} \in H(x) $, is an isomorphism of Hilbert spaces. 
\end{lemma}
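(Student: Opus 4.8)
The plan is to check, in turn, that $U$ is well-defined, linear, isometric, and surjective; linearity is immediate from the pointwise formula, so the content is in the remaining three. Throughout I will exploit the specific shape of the fundamental sequence $\{f_{n,m}\}_{n,m}$ for $X \ast H$ coming from \cref{prop: family of sections defining standard Borel structure on the Hilbert bundle}: each $f_{n,m}$, viewed as a section of $X \ast H$, is supported in the $m$-th summand, so that for any $h = (h_i)_i \in H(x)$ the pairing collapses as $\langle h, f_{n,m}(x)\rangle_{H(x)} = \langle h_m, f_{n,m}(x)\rangle_{H_m(x)}$. This reduces every measurability question to a single-summand computation handled by \cref{lem: measurability of maps into Borel hilbert bundle}.

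First I would establish well-definedness. Fix $(\xi_i)_i$ in the domain, so each $\xi_i \in \int_X^\oplus H_i(x)\, d\mu(x)$ and $\sum_i \|\xi_i\|^2 < \infty$. Each $x \mapsto \|\xi_i(x)\|_{H_i(x)}^2$ is a nonnegative Borel function by \cref{lem: measurability of maps into Borel hilbert bundle}(ii), and the monotone convergence theorem gives
$$ \int_X \sum_{i=1}^{\infty} \|\xi_i(x)\|_{H_i(x)}^2 \, d\mu(x) = \sum_{i=1}^{\infty} \int_X \|\xi_i(x)\|_{H_i(x)}^2 \, d\mu(x) = \sum_{i=1}^{\infty} \|\xi_i\|^2 < \infty . $$
Consequently $\sum_i \|\xi_i(x)\|^2 < \infty$ for $\mu$-almost every $x$, so $(\xi_i(x))_i$ genuinely lies in $H(x) = \bigoplus_i H_i(x)$ off a $\mu$-null set; redefining the section to be $0$ on that null set (which does not change its class in the direct integral) yields an honest section whose square norm is the integrable function displayed above. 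That this section is Borel follows from \cref{lem: measurability of maps into Borel hilbert bundle}(i): testing against $f_{n,m}$ and using the collapse of the pairing gives $\langle (\xi_i(x))_i, f_{n,m}(x)\rangle_{H(x)} = \langle \xi_m(x), f_{n,m}(x)\rangle_{H_m(x)}$, which is Borel because $\xi_m$ is a Borel section of $X \ast H_m$. Hence $U(\xi_i)_i \in \int_X^\oplus H(x)\, d\mu(x)$.

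Reading the same computation as $\|U(\xi_i)_i\|^2 = \int_X \sum_i \|\xi_i(x)\|^2 \, d\mu(x) = \sum_i \|\xi_i\|^2 = \|(\xi_i)_i\|^2$ shows that $U$ is a linear isometry, and in particular that its range is closed. For surjectivity I would argue directly: given $\eta \in \int_X^\oplus H(x)\, d\mu(x)$, let $\xi_i(x)$ be the $i$-th coordinate of $\eta(x)$. Again by the collapse of the pairing, $\langle \xi_i(x), f_{n,i}(x)\rangle_{H_i(x)} = \langle \eta(x), f_{n,i}(x)\rangle_{H(x)}$ is Borel, so $\xi_i$ is a Borel section of $X \ast H_i$ by \cref{lem: measurability of maps into Borel hilbert bundle}(i); since $\|\xi_i(x)\|^2 \le \|\eta(x)\|^2$ it is square-integrable, and monotone convergence gives $\sum_i \|\xi_i\|^2 = \int_X \|\eta(x)\|^2 \, d\mu(x) = \|\eta\|^2 < \infty$. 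Thus $(\xi_i)_i$ lies in the domain and $U(\xi_i)_i = \eta$, so $U$ is onto.

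The only genuine subtlety I anticipate is the measurability bookkeeping — confirming that both the assembled section $x \mapsto (\xi_i(x))_i$ and the coordinate sections $\xi_i$ are Borel — but this is precisely what the fundamental-sequence criterion of \cref{lem: measurability of maps into Borel hilbert bundle} is designed for, and it becomes routine once one notes that the fundamental sequence of $X \ast H$ consists of sections each living in a single summand. Everything else reduces to the monotone convergence theorem. As an alternative to the explicit coordinate construction in the surjectivity step, one could instead observe that the images under $U$ of the single-summand sections $T_a(f_{n,m})$ realize the dense spanning set of \cref{lem: dense subspace given by fundamental sequence} and invoke closedness of the range; the direct argument is preferable here, however, since it avoids the boundedness and probability-measure hypotheses of that lemma.
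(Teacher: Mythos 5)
Your proof is correct. The paper actually omits the proof of this lemma entirely (``The proof of the next result is straightforward and so we omit it''), and your argument --- well-definedness and isometry via monotone convergence, measurability handled by testing against the single-summand fundamental sequence $\{f_{n,m}\}_{n,m}$ through \cref{lem: measurability of maps into Borel hilbert bundle}, and surjectivity via coordinate sections --- is precisely the routine verification the author had in mind, with the null-set and Borel bookkeeping done carefully.
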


\begin{definition} \label{def: isomorphism groupoid PaperA}
	Let $X \ast H$ be a Borel Hilbert bundle over a standard Borel space $X$, and let $\{f_n \}_n$ be a fundamental sequence for $X \ast H$. The \emph{isomorphism groupoid} associated to the bundle $X \ast H$ is the groupoid $$ \text{Iso}(X \ast H) := \left\lbrace  (x,T,y) \mid x,y \in X \text{ and } T \in \mathcal{U}(H(y), H(x)) \right\rbrace ,$$ where, for $x,y \in X$, $\mathcal{U}(H(y),H(x))$ denotes the set of Hilbert space isomorphisms $T \colon H(y) \to H(x)$. With the canonical operations $(x,T,y) (y,R,z) = (x,TR,z)$ and $(x,T,y)^{-1} = (y,T^{-1},x)$, $\text{Iso}(X \ast H)$ becomes a groupoid. Moreover, endowing $\text{Iso}(X \ast H)$ with the weakest Borel structure for which the maps $$ (x,T,y) \mapsto \langle T f_n(y) , f_m(x) \rangle_{H(x)} $$ are Borel for all $n,m \in \N$, it becomes a standard Borel space and the groupoid operations are Borel.
\end{definition}

\subsection{Étale groupoids} \label{subsec: etale gpds}
A \emph{groupoid} is a set $\G$ equipped with a partially defined multiplication (composition) $\G^{(2)} \to \G \, , \, (x,y) \mapsto xy$, where $\G^{(2)} \subset \G \times \G$ is the set of composable pairs, and with an inverse map $\G \to \G \ , \, x \mapsto x^{-1}$, such that the following three axioms are satisfied:

\begin{itemize}
	\item If $(x , y), (y, z) \in \G^{(2)}$, then $(x y , z) , (x , yz) \in \G^{(2)}$ and $(xy ) z = x (y z)$;
	\item $(x^{-1})^{-1} = x$, for all $x \in \G$;
	\item $(x, x^{-1}) \in \G^{(2)}$, for all $x \in \G$, and when $(x,y) \in G^{(2)}$, we have $x^{-1} (xy) = y$ and $(xy)y^{-1} = x$.
\end{itemize}

The set $\G^{(0)} := \{ x x^{-1} \colon x \in \G \}$ is called the \emph{unit space} of the groupoid $\G,$ and the maps $r \colon \G \to \G , r(x) = x x^{-1}$ and $s \colon \G \to \G , x \mapsto x^{-1} x$ are called the \emph{range} and \emph{source} maps respectively. We have that $(x,y) \in \G^{(2)}$ if and only if $s(x) = r(y)$. A groupoid endowed with a locally compact topology such that multiplication and inversion are continuous is called a \emph{topological groupoid}. Moreover, if the topology is such that the range map, and therefore also the source map, is a local homeomorphism, the groupoid is said to be \emph{étale}. The sets $U \subset \G$ for which $s |_U$ and $r |_U$ are injective are called \emph{bisections}. Thus, an étale groupoid is one whose topology has a basis of open bisections. For any $X \subset \G^{(0)} ,$ we denote by $\G_X = \{x \in \G \colon s(x) \in X\} $ and $\G^{X} = \{x \in \G \colon r(x) \in X\}$. We shall write $\G_u$ and $\G^{u}$ instead of $\G_{\{u\}}$ and $G^{\{u\}}$ whenever $u \in \G^{(0)}$ is a unit. The set $$\G (X) = \G_X \cap \G^{X} = \{x \in \G \colon r(x) , s(x) \in X\} ,$$ is a subgroupoid of $\G$, with unit space $X$, called the \emph{restriction} of $\G$ to $X$. A subset $X \subset \G^{(0)}$ is called \emph{invariant} if for all $g \in \G$, we have that $s(g) \in X$ if and only if $r(g) \in X$.
The \emph{isotropy group} at a unit $u \in \G^{(0)}$ is the group $\G(u) = \G_u \cap \G^{u}$ and the \emph{isotropy bundle} is $$\text{Iso}(\G) := \{ x \in \G \colon s(x) = r(x) \} = \bigsqcup_{u \in \G^{(0)}} \G(u) .$$ A \emph{homomorphism} between two étale groupoids $\G$ and $\mathcal{H}$, is a map $\phi \colon \G \to \mathcal{H}$ such that if $(x,y) \in \G^{(2)}$, then $(\phi(x) , \phi(y)) \in \mathcal{H}^{(2)}$, and in this case $\phi(xy) = \phi(x) \phi(y)$. Two étale groupoids are said to be \emph{isomorphic} if there is a bijective groupoid homomorphism that is also a homeomorphism. A \emph{Borel homomorphism} of groupoids is a homomorphism of groupoids which is also a Borel map. 

One associates to a Hausdorff étale groupoid $\G$ an associative and involutive normed algebra in the following manner: Since $\G$ is étale, the fibers $\G_u$ and $\G^u$, for $u \in \G^{(0)}$, are discrete. Let $C_c (\G)$ denote the space of continuous compactly supported functions on $\G$. We endow $C_c (\G)$ with the convolution product, which for $f,g \in C_c (\G)$ is given by $$ f \ast g (x) = \sum_{y \in \G_{s(x)}} f(x y^{-1}) g(y) = \sum_{y \in \G^{r(x)}} f(y) g(y^{-1}x) ,$$ for $x \in \G$. The involution is defined by $$ f^{\ast}(x) = \overline{f(x^{-1})} ,$$ for $f \in C_c (\G)$ and $x \in \G$, and the \emph{$I$-norm} on $C_c (\G)$ is given by $$ \|f\|_I = \max \left\lbrace \sup_{u \in \G^{(0)}} \sum_{x \in \G_u} |f(x)| \, , \, \sup_{u \in \G^{(0)}} \sum_{x \in \G^{u}} |f(x)| \right\rbrace .$$ With the above norm and algebraic operations, $(C_c (\G), \ast, ^{\ast}, \|\cdot\|_{I})$ becomes an involutive normed algebra.

The commutative algebra of bounded Borel functions on $\G$ will be denoted by $B(\G)$, and the ideal in $B(\G)$ of bounded compactly supported Borel functions by $B_c (\G)$.

\subsection{Unitary representations of étale groupoids} \label{subsec: unitary repr. of gpds}
From this point going forward, we assume that $\G$ is a second-countable Hausdorff étale groupoid. In particular, $\G$ and $\G^{(0)}$ are both standard Borel spaces.

\begin{definition} \label{def: unitary representation of groupoids PaperA}
	A \emph{unitary representation} of an étale groupoid $\G$ is a Borel homomorphism $\pi \colon \G \to \text{Iso}(\G^{(0)} \ast H_\pi)$ which preserves the unit space, in the sense that for all $x \in \G$, $\pi(x) = (r(x) , \hat{\pi}(x) , s(x))$, where $\hat{\pi}(x) \in \mathcal{U}(H_\pi (s(x)) , H_\pi (r(x)))$.
\end{definition}
We shall usually identify a given unitary representation $\pi$ with $\hat{\pi}$, so that when we write $\pi(x)$ we shall often mean $\hat{\pi}(x) \in \mathcal{U}(H_\pi (s(x)) , H_\pi (r(x)))$.

\begin{definition} \label{def: quasi-invariant measure}
	Let $\mu$ be a Radon measure on $\G^{(0)}$. The map $$ f \mapsto \int_{\G^{(0)}} \sum_{x \in \G^{u}} f(x) \, d \mu (u) $$ is a positive linear functional on $C_c (\G)$, and hence by the Riesz-Markov-Kakutani theorem there exists a Radon measure $\nu_\mu$ on $\G$ such that $$ \int_{\G^{(0)}} \sum_{x \in \G^{u}} f(x) \, d \mu (u) = \nu_\mu (f) ,$$ for all $f \in C_c (\G)$. Let $\nu_{\mu}^{-1}$ denote the measure given by $\nu_{\mu}^{-1}(E) = \nu_\mu (E^{-1})$, for every Borel set $E$. Then we have that $$ \int_{\G^{(0)}} \sum_{x \in \G_{u}} f(x) \, d \mu (u) = \nu_{\mu}^{-1}(f) ,$$ for all $f \in C_c (\G)$. The Radon measure $\mu$ is said to be \emph{invariant} if $\nu_\mu = \nu_{\mu}^{-1}$, and \emph{quasi-invariant} if $\nu_\mu$ and $\nu_{\mu}^{-1}$ are mutually absolutely continuous. 
\end{definition}
We shall usually denote the induced measure of $\mu$ by $\nu$ instead of $\nu_\mu$, when there is no danger of confusion. 
When $\G$ is an étale groupoid, a Radon measure $\mu$ on $\G^{(0)}$ is quasi-invariant if and only if the following holds: for any Borel bisection $E \subset \G$, we have that $\mu(r(E)) = 0$ if and only if $\mu(s(E)) = 0$ (see \cite[Proposition 3.2.2]{Paterson:GroupoidsAndTheirOperatorAlgebras}). For a family $\mathcal{M}$ consisting of quasi-invariant measures, we define its \emph{support} to be the set $\bigcup_{\mu \in \mathcal{M}} \supp (\mu) $.
When $\mu$ is quasi-invariant, the Radon-Nikodym theorem gives a Borel map $\Delta = \frac{d \nu}{d \nu^{-1}}$ on $\G$ called the \emph{modular function} for $\mu$. One may in fact choose $\Delta$ so that it becomes a Borel homomorphism from $\G$ to the multiplicative group of positive real numbers (see \cite{Hahn:HaarMeasureForMeasuredGroupoids,Ramsay:TopologiesOnMeasuredGroupoids}), and we will always do this; in particular, $\Delta(xy) = \Delta (x) \Delta(y)$, for all $(x,y) \in \G^{(2)}$, and $\Delta(x^{-1}) = \Delta(x)^{-1}$, for all $x \in \G$. From a quasi-invariant measure $\mu$, we may also define a Radon measure $\nu_0$ by $d \nu_0 := \Delta^{-1/2} \, d \nu = \Delta^{1/2} \, d \nu^{-1}$. The measure $\nu_0$ satisfies that $\nu_0 (E) = \nu_0 (E^{-1})$, for all Borel sets $E \subset \G$.

\begin{theorem} \cite[Theorem 1.21 and Proposition 1.7]{Renault:AGroupoidApproach} \label{thm: fundamental theorem of analysis on groupoids PaperA}
	Let $\pi$ be a unitary representation of $\G$, with associated Borel Hilbert bundle $\G^{(0)} \ast H_\pi$. If $\mu$ is a quasi-invariant measure on $\G^{(0)}$, then $\pi$ integrates to an $I$-norm bounded representation of $C_c (\G)$ on $\int_{\G^{(0)}}^{\oplus} H_\pi (x) \, d \mu(x)$, denoted $\pi_\mu$, such that $$ \langle \pi_\mu (f) \xi , \eta \rangle = \int_{\G} f(x) \langle \pi(x) \xi(s(x)) , \eta(r(x)) \rangle_{H_\pi(r(x))} \, d \nu_0 (x) ,$$ for $\xi , \eta \in \int_{\G^{(0)}}^{\oplus} H_\pi(x) \, d \mu(x)$ . Explicitly, $\pi_\mu$ is given by $$ \pi_\mu (f)\xi (u) = \sum_{x \in \G^{u}} f (x) \pi(x)(\xi (s(x))) \Delta(x)^{-1/2} ,$$ for $\xi \in \int_{\G^{(0)}}^{\oplus} H_\pi(u) \, d \mu(u)$ and $u \in \G^{(0)}$. 
	
	Conversely, every representation of $C_c (\G)$ on a separable Hilbert space is unitarily equivalent to $\pi_\mu$ for some unitary representation $\pi$ on $\G$ and quasi-invariant measure $\mu$.
\end{theorem}

The representation $\pi_\mu$ of $C_c (\G)$ is called the \emph{integrated form} of $\pi$ with respect to the quasi-invariant measure $\mu$. Using well-known identities associated with the Radon-Nikodym derivative, it is not hard to see that if $\mu \sim \mu^\prime$ are equivalent quasi-invariant measures, then $\pi_\mu \sim \pi_{\mu^\prime}$ are unitarily equivalent representations. We shall sometimes denote by $H_{\pi,\mu}$ the direct integral $\int_{\G^{(0)}}^{\oplus} H_\pi(u) \, d \mu(u)$. 

\begin{example} \label{ex: left regular representation induced from unitary groupoids representation}
	Let $\G$ be a second-countable étale groupoid. The \emph{left regular representation} is the unitary representation $\lambda$, where $H_\lambda (u) = \ell^{2}(\G^u)$, for each $u \in \G^{(0)}$, and $\lambda \colon \G \to \text{Iso}(\G^{(0)} \ast H_\lambda)$ is given by $$\lambda(x) \colon \ell^{2}(\G^{s(x)}) \to \ell^2 (\G^{r(x)}) \, , \, \lambda(x) \xi (y) = \xi (x^{-1} y).$$ A fundamental sequence for the Borel Hilbert bundle $\G^{(0)} \ast H_\lambda$ is given by taking any sequence $\{f_n\}_n$ that is dense in the inductive limit topology of $C_c (\G)$, and identifying each such $f_n$ with the section $u \mapsto {f_n}_{|_{\G^{u}}} \in \ell^{2}(\G^{u})$. 
	Fix any quasi-invariant measure $\mu$ on $\G^{(0)}$. The direct integral $\int_{\G^{(0)}}^{\oplus} \ell^2 (\G^u) \, d \mu(u)$ is canonically identified with the Hilbert space $L^2 (\G, \nu)$, by the identification of an $L^2 (\G, \nu)$ function $\xi$ with the section given by $u \mapsto \xi_{|_{\G^{u}}}$. Under this identification, the integrated form of the left regular representation with respect to $\mu$ is given by $$ \lambda_\mu (f)(\xi)(y) = \sum_{x \in \G^{r(y)}} f(x) \lambda(x) (\xi)(y) \Delta(x)^{-1/2} = \sum_{x \in \G^{r(y)}} f(x) \xi(x^{-1}y) \Delta(x)^{-1/2} .$$ 
	Pointwise multiplication with the Borel homomorphism $\Delta^{-1/2}$ is a Hilbert space isomorphism from $L^{2}(\G, \nu^{-1})$ to $L^2 (\G , \nu)$, and under this identification the above shows that $\lambda_\mu$ is just given by the usual left convolution on $L^2 (\G , \nu^{-1})$ (without the $\Delta$-term). Whenever we write $\text{Ind}(\mu)$, we shall mean the representation of $C_c (\G)$ sending a function to the corresponding operator on $L^2 (\G , \nu^{-1})$ given by left convolution. Thus $\text{Ind}(\mu)$ is unitarily equivalent to $\lambda_\mu$, for any quasi-invariant measure $\mu$. In fact, one can check that $\text{Ind}(\mu)$ defines a representation of $C_c (\G)$ for any Radon measure $\mu$, not only the quasi-invariant ones.
\end{example}

\begin{definition} \label{def: the full and reduced C*-algebras of a groupoid in terms of its unitary representations}
	The \emph{full $\rm C^*$-algebra} associated to the groupoid $\G$ is $\Cfull(\G) := \overline{C_c (\G)}^{\| \cdot \|_{max}} ,$ where for any $f \in C_c (\G)$, $$ \|f\|_{max} := \sup_{\mu , \pi} \|\pi_\mu (f)\| ,$$ where the supremum is taken over all unitary representations $\pi$ of the groupoid and all quasi-invariant measures $\mu$ on $\G^{(0)}$. The \emph{reduced $\rm C^*$-algebra} associated to $\G$ is $C_{r}^{\ast}(\G) := \overline{C_c (\G)}^{\|\cdot\|_{r}},$ where for any $f \in C_c (\G)$, $$\|f\|_{r} := \sup_{\mu} \|\lambda_\mu (f)\| = \sup_{\mu} \|\text{Ind}(\mu)(f)\| ,$$ where the supremum is taken over all quasi-invariant measures $\mu$ on $\G^{(0)}$. The Dirac measures $\delta_u$, for $u \in \G^{(0)}$, are probability measures on $\G^{(0)}$, and for any Radon measure $\mu$, we have $\|\text{Ind}(\mu) (f)\| = \sup_{u \in \supp(\mu)} \|\text{Ind}(\delta_u) (f)\|$, for any $f \in C_c (\G)$ (see \cite[Proposition 3.1.2]{Paterson:GroupoidsAndTheirOperatorAlgebras}). The transitive measures (see \cite[Definition 3.9]{Renault:AGroupoidApproach}) on an étale groupoid $\G$ are quasi-invariant, and the combined support of these covers the unit space. Using this together with the above, we arrive at the more well-known description of the reduced norm for an element $f \in C_c (\G)$, namely $$ \|f\|_{r} = \sup_{u \in \G^{(0)}} \|\text{Ind}(\delta_u)(f)\| .$$ 
\end{definition}

\section{Constructing groupoid $\rm C^*$-algebras} \label{sec: constructing grouopid c*-algebras}
We now turn to the task of associating to a groupoid $\G$, $\rm C^*$-algebras from given pairs $(D, \mathcal{M})$, where $D$ is an algebraic ideal inside $B(\G)$, the algebra of bounded complex-valued Borel functions on $\G$, and $\mathcal{M}$ is a non-empty family of quasi-invariant measures on the unit space.

\begin{definition} \label{def: D-representations PaperA}
	Let $D \trianglelefteq B(\G)$ be an algebraic ideal. A unitary representation $\pi$ of $\G$ is a \emph{$D$-representation} if there exists a fundamental sequence $\{f_n\}_n$ of the Borel Hilbert bundle $\G^{(0)} \ast H_\pi$, such that for all $n, m \in \N$, we have that the Borel function $$ x \mapsto \langle \pi(x) f_n(s(x)), f_m (r(x)) \rangle_{H_\pi (r(x))} ,$$ is an element of $D$. 
\end{definition}

\begin{remark} \label{rmk: how the groupoid defintion of unitary D-reps generalize the group one}
	Suppose that $\G$ is a group and that $D \trianglelefteq \ell^{\infty} (\G)$ is an ideal. Then a Borel Hilbert bundle over the identity element consists of a single separable Hilbert space $H$, and a fundamental sequence is just a sequence of vectors densely spanning $H$. Suppose that $\pi$ is a unitary representation which is a $D$-representation in the sense of \cref{def: D-representations PaperA} for some spanning sequence. Then given any pair of vectors in the dense subspace given by the linear span of this sequence, the corresponding matrix coefficients are clearly also in $D$. Conversely, if $\pi$ is a $D$-representation in the sense of \cite[Definition 2.1]{BrownandGuentner:NewC*-completions}, then by definition there exists a dense linear subspace $H_0$ of $H_\pi$ such that the matrix coefficients corresponding to vectors in $H_0$ are in $D$. Taking any spanning sequence of $H_\pi$ in $H_0$, we see that $\pi$ is a $D$-representation in the sense of \cref{def: D-representations PaperA}. Therefore, \cref{def: D-representations PaperA} reduces to that of \cite[Definition 2.1]{BrownandGuentner:NewC*-completions} when $\G$ is a group.
\end{remark}

\begin{lemma} \label{lema: matrix coefficients in D for fundamental sequence imply the same for the dense subspace they span}
	Let $\pi$ be a unitary representation of $\G$ and let $D \trianglelefteq B(\G)$ be an algebraic ideal. Fix a quasi-invariant measure $\mu$ on the unit space $\G^{(0)}$. If $\pi$ is a $D$-representation, then there exists a dense subspace $H_0 \subset H_{\pi, \mu}$ such that for all $\xi , \eta \in H_0$, the map $x \mapsto \langle \pi(x) \xi(s(x)) , \eta (r(x)) \rangle_{H_\pi (r(x))}$ is in $D$.
	\begin{proof}
		By definition, there exists a fundamental sequence $\{f_n\}_n$ for the Borel Hilbert bundle $\G^{(0)} \ast H_\pi$ such that for every $m,n \in \N$, the map $x \mapsto \langle \pi(x) f_n (s(x)) , f_m (r(x)) \rangle_{H_\pi (r(x))}$ is in $D$. In particular, for any $n \in \N$, the function $$u \mapsto \langle \pi(u) f_n (u) , f_n (u) \rangle_{H_\pi (u)} = \langle f_n (u) , f_n (u) \rangle_{H_\pi (u)} = \| f_n(u) \|_{H_\pi (u)}^{2}$$ is bounded on the unit space. By \cite[Proposition 7, p.172]{Dixmier:VonNeumannAlgebras}, the linear span of the set in \cref{lem: dense subspace given by fundamental sequence} is dense in the direct integral $H_{\pi, \mu}$. Given any vectors $\xi$ and $\eta$ in said linear span, there are finitely many functions $a_i , b_j \in L_{\mathrm{fin}}^{\infty} (\G^{(0)} , \mu)$ such that $\xi = \sum_i T_{a_i} (f_i)$ and $\eta = \sum_j T_{b_j} (f_j)$. Since $D$ is an ideal, it follows that the function
		\begin{align*}
			x \mapsto \langle \pi(x) \xi(s(x)), \eta(r(x)) \rangle_{H_\pi (r(x))} &= \left\langle \sum_i a_i (s(x)) \pi(x) f_i (s(x)) , \sum_j b_j (r(x)) f_j (r(x)) \right\rangle_{H_\pi (r(x))} \\
			&= \sum_{i,j} a_i (s(x)) \overline{b_j (r(x))} \langle \pi(x) f_i (s(x)) , f_j (r(x)) \rangle_{H_{\pi} (r(x))}
		\end{align*}
		is in $D$ as well.
	\end{proof}
\end{lemma}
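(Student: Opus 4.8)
The plan is to let $\{f_n\}_n$ be a fundamental sequence for $\G^{(0)} \ast H_\pi$ witnessing that $\pi$ is a $D$-representation (as in \cref{def: D-representations}), and to take $H_0$ to be the linear span of the sections $T_a(f_n)$ with $a \in L^\infty(\G^{(0)}, \mu)$ and $n \in \N$, i.e.\ exactly the spanning set appearing in \cref{lem: dense subspace given by fundamental sequence}. Two things must then be checked: that $H_0$ is dense in $H_{\pi,\mu}$, and that every pair of vectors in $H_0$ produces a matrix coefficient lying in $D$.

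For density, I would first argue that each $f_n$ is a bounded section, which is precisely the hypothesis needed to apply \cref{lem: dense subspace given by fundamental sequence}. Since a unitary representation preserves the unit space, $\pi(u)$ is the identity operator for every unit $u$, so the diagonal matrix coefficient of $f_n$ reduces on $\G^{(0)}$ to $u \mapsto \langle f_n(u), f_n(u)\rangle_{H_\pi(u)} = \| f_n(u) \|_{H_\pi(u)}^2$. By the $D$-representation hypothesis this function is the restriction of an element of $D \subseteq B(\G)$, hence is bounded, so $f_n$ is a bounded section. With this in hand, and since $\mu$ is a probability measure, \cref{lem: dense subspace given by fundamental sequence} immediately yields that $H_0$ is dense in $H_{\pi,\mu}$.

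For the matrix-coefficient condition, I would take arbitrary $\xi = \sum_i a_i f_i$ and $\eta = \sum_j b_j f_j$ in $H_0$ (finite sums) and expand the matrix coefficient using sesquilinearity of the fiber inner products on $H_\pi(r(x))$. This produces a finite sum of terms of the form $a_i(s(x)) \overline{b_j(r(x))} \langle \pi(x) f_i(s(x)), f_j(r(x))\rangle_{H_\pi(r(x))}$, in which the last factor is in $D$ by hypothesis. The prefactors $a_i \circ s$ and $\overline{b_j \circ r}$ are bounded Borel functions on $\G$, hence elements of $B(\G)$, so the ideal property of $D$ keeps each product in $D$, and the fact that $D$ is a linear subspace keeps the finite sum in $D$.

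The step I expect to require the most care is measure-theoretic rather than algebraic: the coefficients $a_i, b_j$ are a priori only $L^\infty(\G^{(0)}, \mu)$-classes, whereas membership in $D \subseteq B(\G)$ refers to genuine bounded Borel functions on $\G$. I would therefore fix honest bounded Borel representatives of the $a_i, b_j$ before composing with the range and source maps, so that $a_i \circ s$ and $\overline{b_j \circ r}$ are bona fide elements of $B(\G)$ and the resulting matrix coefficient is a genuine Borel function rather than merely an almost-everywhere defined one. Once representatives are fixed, the remaining steps are purely formal applications of the ideal and linear-subspace structure of $D$.
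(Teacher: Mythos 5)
Your proposal is correct and follows essentially the same route as the paper's proof: the same choice of $H_0$ as the span of the sections $T_a(f_n)$ from \cref{lem: dense subspace given by fundamental sequence}, the same boundedness argument for the $f_n$ via the diagonal matrix coefficients on the unit space, and the same sesquilinear expansion combined with the ideal property of $D$. Your closing remark about fixing genuine bounded Borel representatives of the $L^\infty$-classes is a sound point of care that the paper leaves implicit, but it does not change the argument.
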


\begin{definition} \label{def: C*-algebra associated with an Ideal + family of measures PaperA}
	Let $D \trianglelefteq B(\G)$ be an algebraic ideal and $\mathcal{M}$ a non-empty family of quasi-invariant measures on $\G^{(0)}$. The associated $\rm C^*$-algebra is given as follows: Define a $\rm C^*$-seminorm on $C_c (\G)$ by $$\|f\|_{D, \mathcal{M}} := \sup\left\lbrace \|\pi_\mu (f)\| \colon \, \pi \text{ is a } D\text{-representation and } \mu \in \mathcal{M} \right\rbrace.$$ Putting $\mathcal{N}_{D , \mathcal{M}} := \left\lbrace f \in C_c (\G) \colon \|f\|_{D , \mathcal{M}} = 0 \right\rbrace$, we define $$C_{D , \mathcal{M}}^{\ast} (\G) := \overline{C_c (\G)/\mathcal{N}_{D , \mathcal{M}}}^{\|\cdot\|_{D , \mathcal{M}}} .$$
\end{definition}

\begin{remark} \label{rmk: ideal + measure completion generalize Brown and Guentners definition}
	When $\G$ is a countable discrete group, there is of course only one probability measure on the unit space (which is invariant) and as mentioned in \cref{rmk: how the groupoid defintion of unitary D-reps generalize the group one}, \cite[Definition 2.1]{BrownandGuentner:NewC*-completions} and \cref{def: D-representations PaperA} are equivalent in this case. Therefore, \cref{def: C*-algebra associated with an Ideal + family of measures PaperA} reduces to \cite[Definition 2.2]{BrownandGuentner:NewC*-completions} for countable discrete groups.
\end{remark}

We shall write $C_{D, \mu}^{\ast} (\G)$ instead of $C_{D, \{\mu\}}^{\ast} (\G)$. When $\mathcal{M}$ denotes the family of all quasi-invariant measures, we shall write  $C_{D}^{\ast} (\G)$ instead of $C_{D, \mathcal{M}}^{\ast} (\G)$.

\begin{proposition} \label{prop: full C*-algebra is the B(G)-repr. completion}
	$C_{B(\G)}^{*}(\G) = \Cfull(\G)$.
	\begin{proof}
		This follows since any unitary representation is a $B(\G)$-representation. Indeed, let $\pi$ be a unitary representation with associated Borel Hilbert bundle $\G^{(0)} \ast H_\pi$, and let $\{ f_n \}_n$ be an orthonormal fundamental sequence for $\G^{(0)} \ast H_\pi$. Then for any $m,n \in \N$, by Cauchy-Schwartz, 
		\begin{align*}
			|\langle \pi(x) f_n (s(x)), f_m (r(x)) \rangle_{H_\pi(r(x))}| &\leq \|\pi(x) f_n (s(x)) \|_{H_\pi(r(x))}\| f_m (r(x)) \|_{H_\pi(r(x))} \\
			&= \| f_n (s(x)) \|_{H_\pi(s(x))} \| f_m (r(x)) \|_{H_\pi(r(x))} \leq 1,
		\end{align*}
		for $x \in \G$, and we already know that the map $x \mapsto \langle \pi(x) f_n (s(x)), f_m (r(x)) \rangle_{H_\pi(r(x))}$ is Borel.
	\end{proof}
\end{proposition}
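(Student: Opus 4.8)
The plan is to reduce the asserted equality of $C^*$-algebras to an equality of the two $C^*$-seminorms they complete on $C_c(\G)$, and for that it suffices to prove that the class of $B(\G)$-representations coincides with the class of \emph{all} unitary representations of $\G$. Recall that $C_{B(\G)}^{*}(\G)$ is shorthand for $C_{B(\G),\mathcal{M}}^{*}(\G)$ with $\mathcal{M}$ the family of all quasi-invariant probability measures on $\G^{(0)}$, so its seminorm $\|\cdot\|_{B(\G),\mathcal{M}}$ is the supremum of $\|\pi_\mu(f)\|$ taken over all $B(\G)$-representations $\pi$ and all quasi-invariant $\mu$, whereas $\|\cdot\|_{max}$ ranges over all unitary representations and all quasi-invariant $\mu$. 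Since every $B(\G)$-representation is by definition a unitary representation, one inequality between the suprema is automatic; the whole content is the converse, namely that an arbitrary unitary representation $\pi$ satisfies \cref{def: D-representations} with $D=B(\G)$.

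To establish this I would fix a unitary representation $\pi$ with bundle $\G^{(0)}\ast H_\pi$ and invoke the orthonormalization recorded after \cref{lem: measurability of maps into Borel hilbert bundle}, so as to work with an orthonormal fundamental sequence $\{f_n\}_n$ for which the nonzero vectors among $\{f_n(u)\}_n$ form an orthonormal basis of $H_\pi(u)$ at each $u$; in particular $\|f_n(u)\|_{H_\pi(u)}\le 1$ for every $n$ and $u$. For fixed $m,n$ the task is then to verify that the candidate matrix coefficient $x\mapsto \langle \pi(x)f_n(s(x)),f_m(r(x))\rangle_{H_\pi(r(x))}$ lies in $B(\G)$, i.e.\ is bounded and Borel. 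Borelness is immediate from the definitions: by \cref{def: unitary representation of groupoids} the map $\pi$ is a Borel homomorphism into $\text{Iso}(\G^{(0)}\ast H_\pi)$, whose Borel structure (\cref{def: isomorphism groupoid}) is by construction the weakest one making precisely such matrix-coefficient maps Borel, so the composition is Borel. Boundedness is the short Cauchy--Schwarz estimate using unitarity of $\pi(x)$,
\begin{align*}
|\langle \pi(x)f_n(s(x)),f_m(r(x))\rangle_{H_\pi(r(x))}| &\le \|\pi(x)f_n(s(x))\|_{H_\pi(r(x))}\,\|f_m(r(x))\|_{H_\pi(r(x))}\\
&= \|f_n(s(x))\|_{H_\pi(s(x))}\,\|f_m(r(x))\|_{H_\pi(r(x))}\le 1,
\end{align*}
valid for all $x\in\G$. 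Hence $\pi$ is a $B(\G)$-representation.

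Once this is in place, the two suprema defining $\|\cdot\|_{B(\G),\mathcal{M}}$ and $\|\cdot\|_{max}$ are taken over identical families — all unitary representations paired with all quasi-invariant probability measures — so the seminorms agree on $C_c(\G)$, and therefore their separated completions $C_{B(\G)}^{*}(\G)$ and $\Cfull(\G)$ coincide. I do not expect a genuine obstacle here; the only points demanding care are that a \emph{single} fundamental sequence must simultaneously witness $B(\G)$-membership of every matrix coefficient, which is exactly what passing to an orthonormal fundamental sequence guarantees, and that the measure family in $C_{B(\G)}^{*}(\G)$ is the full one, so that it matches the family appearing in $\|\cdot\|_{max}$.
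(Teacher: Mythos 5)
Your proof is correct and follows essentially the same route as the paper: both arguments reduce the statement to showing that every unitary representation is a $B(\G)$-representation, witnessed by an orthonormal fundamental sequence, with boundedness of the matrix coefficients coming from Cauchy--Schwarz and unitarity, and Borelness coming from the definitions of the isomorphism groupoid and unitary representations. Your write-up merely makes explicit the reduction to equality of seminorms and the matching of the measure families, which the paper leaves implicit.
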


Recall that $B_c (\G)$ denotes the set of compactly supported bounded Borel functions. 

\begin{lemma} \label{lem: left regular representation is a B_c (G)-representation}
	The left regular representation is a $B_c (\G)$-representation.
	\begin{proof}
		Let $\lambda$ denote the left regular representation on $\G$. We take as our fundamental sequence a dense sequence in the inductive limit topology of $C_c (\G)$ identified in the canonical way as sections. For two such sections $f, g \in C_c (\G)$, we have
		\begin{align*}
			\langle \lambda(x) f(s(x)), g(r(x)) \rangle_{\ell^{2}(\G^{r(x)})} &= \sum_{y \in \G^{r(x)}} \lambda(x) f(s(x))(y) \overline{g(r(x))(y)} \\
			&= \sum_{y \in \G^{r(x)}} f(x^{-1} y) \overline{g(y)} \\
			&= \sum_{y \in \G^{r(x)}} \overline{g(y)} \, \overline{f^{\ast} (y^{-1}x)} = \overline{g} \ast \overline{f^\ast} (x),
		\end{align*}
		and clearly the function $$x \mapsto \langle \lambda(x) f(s(x)), g(r(x)) \rangle_{\ell^{2}(\G^{r(x)})} = \overline{g} \ast \overline{f^\ast} (x) ,$$ is in $C_c (\G) \subset B_c (\G)$.
	\end{proof}
\end{lemma}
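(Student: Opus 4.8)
The plan is to verify the defining condition of a $B_c(\G)$-representation (\cref{def: D-representations}) directly, by exhibiting a single convenient fundamental sequence for the Borel Hilbert bundle $\G^{(0)} \ast H_\lambda$ and checking that the matrix coefficients it produces are not merely bounded Borel functions but in fact lie in $C_c(\G) \subset B_c(\G)$. The key point to keep in mind is that the definition requires the existence of \emph{one} fundamental sequence with the stated property, so I am free to choose the most computationally transparent one rather than having to argue for all of them.

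First I would take the fundamental sequence already identified in \cref{ex: left regular representation induced from unitary groupoids representation}: fix any countable sup-norm dense sequence $\{f_n\}_n$ in $C_c(\G)$ and identify each $f_n$ with the section $u \mapsto f_n|_{\G^u} \in \ell^2(\G^u)$. Since $\G$ is étale the fibres $\G^u$ are discrete, so each such restriction is a genuine $\ell^2$-vector, and the cited example already guarantees that this is a fundamental sequence for $\G^{(0)} \ast H_\lambda$. This lets me bypass any separate verification of the Borel-bundle axioms and reduce the whole lemma to a calculation of matrix coefficients.

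Next I would compute, for two members $f,g$ of this sequence, the relevant matrix coefficient. Unwinding $\lambda(x)\xi(y) = \xi(x^{-1}y)$ and evaluating the sections at $s(x)$ and $r(x)$ yields
\begin{align*}
\langle \lambda(x) f(s(x)), g(r(x)) \rangle_{\ell^2(\G^{r(x)})} = \sum_{y \in \G^{r(x)}} f(x^{-1}y)\,\overline{g(y)}.
\end{align*}
Rewriting $f(x^{-1}y) = \overline{f^\ast(y^{-1}x)}$ via the involution $f^\ast(z) = \overline{f(z^{-1})}$, the right-hand side is precisely $\overline{g} \ast \overline{f^\ast}$ evaluated at $x$. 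The one step that demands care is exactly this bookkeeping: matching the fibrewise inner product summed over $\G^{r(x)}$ against the groupoid convolution formula, and tracking the conjugations coming from the involution so that the indices and inverses line up. Everything else is formal.

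Finally, since $\overline{g}$ and $\overline{f^\ast}$ both lie in $C_c(\G)$ and $C_c(\G)$ is closed under convolution, the matrix coefficient $\overline{g} \ast \overline{f^\ast}$ belongs to $C_c(\G)$, hence to $B_c(\G)$. As this holds for every pair of members of the fundamental sequence, $\lambda$ is a $B_c(\G)$-representation. I expect the identification of the matrix coefficient with the convolution $\overline{g} \ast \overline{f^\ast}$ to be the only nontrivial input; once it is in hand the conclusion is immediate, and it is worth stressing that it is the \emph{compact support} of this convolution — not just its boundedness — that places us in $B_c(\G)$ rather than merely in $B(\G)$, which is the entire content of the lemma.
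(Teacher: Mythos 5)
Your proposal is correct and follows essentially the same route as the paper's own proof: you take the sup-norm dense sequence in $C_c(\G)$ from \cref{ex: left regular representation induced from unitary groupoids representation} as the fundamental sequence, compute the matrix coefficient as $\overline{g} \ast \overline{f^\ast} \in C_c(\G) \subset B_c(\G)$, and invoke \cref{def: D-representations}. The bookkeeping with the involution and the convolution formula matches the paper's calculation exactly.
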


\begin{remark} \label{rmk: some remarks regarding ideal completions}
	From \cref{def: C*-algebra associated with an Ideal + family of measures PaperA}, a priori, it may well be that for some algebraic ideals $D \trianglelefteq B(\G)$ and family of measures $\mathcal{M}$, the ideal $\mathcal{N}_{D, \mathcal{M}}$ is non-trivial, in which case $C_{D, \mathcal{M}}^{\ast} (\G)$ is strictly speaking not a completion of $C_c (\G)$. However, by \cref{lem: left regular representation is a B_c (G)-representation} and the proof of \cite[Proposition 1.11]{Renault:AGroupoidApproach}, if $C_c (\G) \subset D$ and $\mathcal{M}$ has full support, then $C_{D, \mathcal{M}}^{\ast}(\G)$ is indeed a completion of $C_c (\G)$; because in this case, given any $f \in C_c (\G)$, we have that $$ \|f\|_{r} = \sup_{u \in \G^{(0)}} \|\text{Ind}(\delta_u) (f)\| = \sup_{\mu \in \mathcal{M}} \|\mathrm{Ind}(\mu) (f)\| \leq \|f\|_{D, \mathcal{M}} ,$$ since $\mathcal{M}$ has full support and $\text{Ind}(\mu)$, for $\mu \in \mathcal{M}$, are (unitarily equivalent to) $D$-representations.
\end{remark}

Let us look at some basic properties regarding this construction.

\begin{lemma} \label{lem: ideal containment induces quotient map of c*-algebras}
	Let $D_1 , D_2 \trianglelefteq B(\G)$ be two algebraic ideals and $\mathcal{M}_1 , \mathcal{M}_2$ two families of quasi-invariant measures. If $D_1 \subset D_2$ and $\mathcal{M}_1 \subset \mathcal{M}_2$, then there is a canonical surjection $C_{D_2 , \mathcal{M}_2}^{\ast}(\G) \to C_{D_1 , \mathcal{M}_1}^{\ast}(\G)$.
	\begin{proof}
		If $D_1 \subset D_2$ and $\mathcal{M}_1 \subset \mathcal{M}_2$, then $\|\cdot\|_{D_2 , \mathcal{M}_2} \geq \|\cdot\|_{D_1 , \mathcal{M}_1}$ and $\mathcal{N}_{D_2 , \mathcal{M}_2} \subset \mathcal{N}_{D_1 , \mathcal{M}_1}$. Therefore, the quotient map $ C_c (\G)/\mathcal{N}_{D_2 , \mathcal{M}_2} \to C_c (\G)/\mathcal{N}_{D_1 , \mathcal{M}_1} ,$ extends to a surjective $*$-homomorphism $C_{D_2 , \mathcal{M}_2}^{\ast}(\G) \to C_{D_1 , \mathcal{M}_1}^{\ast} (\G)$.
	\end{proof}
\end{lemma}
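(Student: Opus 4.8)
The plan is to reduce the whole statement to a comparison of the two $C^*$-seminorms on $C_c(\G)$ from \cref{def: C*-algebra associated with an Ideal + family of measures}, and then invoke the standard fact that a contractive $*$-homomorphism between dense $*$-subalgebras extends uniquely to the $C^*$-completions. The only genuinely representation-theoretic input is that the class of representations defining the seminorm is monotone in the ideal $D$; everything else is routine functional analysis.

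First I would record that $D_1 \subset D_2$ forces every $D_1$-representation to be a $D_2$-representation. Indeed, if $\pi$ admits a fundamental sequence $\{f_n\}_n$ for $\G^{(0)} \ast H_\pi$ whose matrix coefficients $x \mapsto \langle \pi(x) f_n(s(x)), f_m(r(x)) \rangle_{H_\pi(r(x))}$ all lie in $D_1$, then the very same fundamental sequence witnesses the $D_2$-representation property, since these functions now lie in the larger ideal $D_2$. Consequently, the set of pairs $(\pi,\mu)$ with $\pi$ a $D_1$-representation and $\mu \in \mathcal{M}_1$, over which the supremum defining $\| \cdot \|_{D_1 , \mathcal{M}_1}$ is taken, is contained in the corresponding set for $\| \cdot \|_{D_2 , \mathcal{M}_2}$, where I also use $\mathcal{M}_1 \subset \mathcal{M}_2$. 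Since a supremum over a larger index set can only increase, this yields $\| f \|_{D_1 , \mathcal{M}_1} \leq \| f \|_{D_2 , \mathcal{M}_2}$ for every $f \in C_c(\G)$.

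From the seminorm inequality I would conclude $\mathcal{N}_{D_2 , \mathcal{M}_2} \subset \mathcal{N}_{D_1 , \mathcal{M}_1}$, so the identity on $C_c(\G)$ descends to a well-defined $*$-algebra homomorphism $C_c(\G)/\mathcal{N}_{D_2 , \mathcal{M}_2} \to C_c(\G)/\mathcal{N}_{D_1 , \mathcal{M}_1}$, sending $f + \mathcal{N}_{D_2 , \mathcal{M}_2} \mapsto f + \mathcal{N}_{D_1 , \mathcal{M}_1}$. Because the quotient norm of $f + \mathcal{N}_{D_i , \mathcal{M}_i}$ equals $\| f \|_{D_i , \mathcal{M}_i}$, the seminorm inequality says exactly that this map is contractive. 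Hence it extends by continuity to a $*$-homomorphism $C_{D_2 , \mathcal{M}_2}^{\ast}(\G) \to C_{D_1 , \mathcal{M}_1}^{\ast}(\G)$ of the completions. Its image is a closed subalgebra (the range of a $*$-homomorphism of $C^*$-algebras is closed) containing the dense subalgebra $C_c(\G)/\mathcal{N}_{D_1 , \mathcal{M}_1}$, and therefore equals all of $C_{D_1 , \mathcal{M}_1}^{\ast}(\G)$, giving the desired surjection.

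I do not expect a serious obstacle: the argument is a packaging of monotonicity of a supremum with the universal property of $C^*$-completions. The one point worth stating explicitly is the transfer of the $D$-representation property under $D_1 \subset D_2$, since this is precisely what makes the two index sets nest (cf.\ \cref{def: D-representations}); after that, the extension-by-continuity step and the closed-range property of $C^*$-homomorphisms do the rest.
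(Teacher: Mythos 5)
Your proof is correct and follows essentially the same route as the paper's: the nesting of the representation classes (every $D_1$-representation is a $D_2$-representation via the same fundamental sequence) gives the seminorm inequality $\| \cdot \|_{D_1,\mathcal{M}_1} \leq \| \cdot \|_{D_2,\mathcal{M}_2}$, hence $\mathcal{N}_{D_2,\mathcal{M}_2} \subset \mathcal{N}_{D_1,\mathcal{M}_1}$, and the induced contractive map on quotients extends to a surjection of the completions. The paper's proof is just a compressed version of this; your explicit justification of the norm inequality and of surjectivity via the closed-range property of $C^*$-homomorphisms supplies exactly the details the paper leaves implicit.
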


\begin{lemma} \label{lem: tensor product of a D-representation with another is a D-representation}
	Let $\G$ be a groupoid and fix $D \trianglelefteq B(\G)$ an algebraic ideal. Suppose that $\rho$ is a $D$-representation and that $\pi$ is a unitary representation. Then the tensor product $\rho \otimes \pi$ is a $D$-representation.
	\begin{proof}
		This follows from the fact that $D$ is an ideal. Indeed, by \cite[Proposition 10 p. 174]{Dixmier:VonNeumannAlgebras}, if $\{ \xi_i \}_i$ and $\{ \eta_j \}_j$ are fundamental sequences, respectively for $\G^{(0)} \ast H_\pi$ and $\G^{(0)} \ast H_\rho$, then $\{ \xi_i \otimes \eta_j \}_{i,j}$ form a fundamental sequence for the Hilbert bundle $\G^{(0)} \ast (H_\pi \otimes H_\rho)$. Here $H_\pi \otimes H_\rho (u) = H_\pi(u) \otimes H_\rho(u)$, for $u \in \G^{(0)}$. Assuming the fundamental sequence $\{ \xi_i \}_i$ is such that the map $x \mapsto \langle \rho(x) \xi_n(s(x)), \xi_m (r(x)) \rangle_{H_\rho (r(x))}$ is in $D$ for all $n,m \in \N$, and $\{\eta_j \}_j$ is any fundamental sequence for $\G^{(0)} \ast H_\pi$, then since $D$ is an ideal, we have that
		\begin{gather*}
			x \mapsto \langle \rho \otimes \pi (x) (\xi_n \otimes \eta_i)(s(x)) , \xi_m \otimes \eta_j (r(x)) \rangle_{H_\rho (r(x)) \otimes H_\pi (r(x))} \\
			= \langle \rho(x) \xi_n (s(x)), \xi_m (r(x)) \rangle_{H_\rho (r(x))} \langle \pi(x) \eta_i (s(x)), \eta_j (r(x)) \rangle_{H_\pi (r(x))} ,
		\end{gather*}
		is in $D$ also.
	\end{proof}
\end{lemma}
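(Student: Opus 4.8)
The plan is to exhibit, for the tensor-product bundle $\G^{(0)} \ast (H_\rho \otimes H_\pi)$, a fundamental sequence all of whose matrix coefficients lie in $D$, and then to use the ideal structure of $D$ to absorb the contribution coming from $\pi$. The guiding principle is that a matrix coefficient of $\rho \otimes \pi$ factors as a product of a matrix coefficient of $\rho$ (which we arrange to lie in $D$) and one of $\pi$ (which lies in $B(\G)$), so the ideal property $D \cdot B(\G) \subseteq D$ finishes the argument.

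First I would invoke the cited result \cite[Proposition 10 p. 174]{Dixmier:VonNeumannAlgebras}: if $\{\xi_i\}_i$ is a fundamental sequence for $\G^{(0)} \ast H_\rho$ and $\{\eta_j\}_j$ is a fundamental sequence for $\G^{(0)} \ast H_\pi$, then $\{\xi_i \otimes \eta_j\}_{i,j}$ is a fundamental sequence for the bundle $\G^{(0)} \ast (H_\rho \otimes H_\pi)$, whose fibers are $H_\rho(u) \otimes H_\pi(u)$. The two choices here are the crux. I would take $\{\xi_i\}_i$ to be a fundamental sequence \emph{witnessing} that $\rho$ is a $D$-representation, so that each map $x \mapsto \langle \rho(x) \xi_n(s(x)), \xi_m(r(x)) \rangle_{H_\rho(r(x))}$ lies in $D$; and I would take $\{\eta_j\}_j$ to be an \emph{orthonormal} fundamental sequence for $\pi$, whose existence is guaranteed by the orthonormalization procedure recorded earlier.

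Next I would compute the matrix coefficients of $\rho \otimes \pi$ along this fundamental sequence. Since $(\rho \otimes \pi)(x) = \rho(x) \otimes \pi(x)$ on the fiber $H_\rho(s(x)) \otimes H_\pi(s(x))$, the inner product factors as
\begin{align*}
	\langle (\rho \otimes \pi)(x)(\xi_n \otimes \eta_i)(s(x)), (\xi_m \otimes \eta_j)(r(x)) \rangle
	&= \langle \rho(x) \xi_n(s(x)), \xi_m(r(x)) \rangle_{H_\rho(r(x))} \\
	&\quad \times \langle \pi(x) \eta_i(s(x)), \eta_j(r(x)) \rangle_{H_\pi(r(x))}.
\end{align*}
The first factor lies in $D$ by the choice of $\{\xi_i\}_i$, while the second factor lies in $B(\G)$: it is Borel because $\pi$ is a unitary representation, and it is bounded because Cauchy--Schwarz together with the orthonormality of $\{\eta_j\}_j$ gives the bound $\leq 1$, exactly as in the proof of \cref{prop: full C*-algebra is the B(G)-repr. completion}. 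As $D$ is an algebraic ideal in $B(\G)$, the product remains in $D$, so every matrix coefficient of $\rho \otimes \pi$ along $\{\xi_i \otimes \eta_j\}_{i,j}$ lies in $D$, whence $\rho \otimes \pi$ is a $D$-representation.

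I expect the only genuine subtlety to be guaranteeing that the $\pi$-factor is bounded, and not merely Borel, so that the ideal property may legitimately be applied; this is precisely why I would select an orthonormal fundamental sequence for $\pi$ rather than an arbitrary one. The remaining points -- the factorization of the tensor-product matrix coefficient and the fact that $\{\xi_i \otimes \eta_j\}_{i,j}$ is again a fundamental sequence -- are routine bookkeeping once the correct sequences have been fixed.
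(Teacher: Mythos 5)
Your proof is correct and follows essentially the same route as the paper: Dixmier's result that $\{\xi_i \otimes \eta_j\}_{i,j}$ is a fundamental sequence for $\G^{(0)} \ast (H_\rho \otimes H_\pi)$, the factorization of the tensor-product matrix coefficient, and the ideal property $D \cdot B(\G) \subseteq D$. Your one refinement -- taking $\{\eta_j\}_j$ orthonormal so that the $\pi$-factor is bounded (hence genuinely in $B(\G)$) via Cauchy--Schwarz, as in \cref{prop: full C*-algebra is the B(G)-repr. completion} -- is a careful touch the paper leaves implicit, since an arbitrary fundamental sequence need not consist of bounded sections.
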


\begin{definition} \label{def: direct sum of countably many unitary representations}
	Assume that $\{ \rho_i \}_{i \in \N}$ is a collection of unitary representations. For each unit $u \in \G^{(0)}$, let $H(u)$ be the Hilbert space direct sum $H(u) := \bigoplus_{i = 1}^{\infty} H_{\rho_i} (u)$ and endow the Hilbert bundle $\G^{(0)} \ast H$ with the canonical standard Borel structure and fundamental sequence. The unitary representation $\rho := \oplus_{i = 1}^{\infty} \rho_i $ is called the \emph{direct sum representation}.
\end{definition}

\begin{proposition} \label{prop: direct sum of D-repr. is again a D-repr.}
	Let $D \trianglelefteq B(\G)$ be an algebraic ideal, and assume that $\{ \rho_i \}_{i \in \N}$ is a sequence of $D$-representations. Then the direct sum representation $\oplus_{i = 1}^{\infty} \rho_i$ is a $D$-representation. Moreover, if $\mu$ is a quasi-invariant measure, then the integrated form of $\oplus_{i = 1}^{\infty} \rho_i$ with respect to $\mu$ is unitarily equivalent to the direct sum of the representations $(\rho_{i})_{\mu}$.
	\begin{proof}
		For the first statement, recall that if $\{f_{n,m}\}_{n}$ is a fundamental sequence of the Hilbert bundle $\G^{(0)} \ast H_{\rho_m}$, for $m \in \N$, then identifying these as sections in $\G^{(0)} \ast H$, where $H(u) = \bigoplus_{m = 1}^{\infty} H_{\rho_m} (u)$, yields the fundamental sequence $\{ f_{n,m} \}_{m,n = 1}^{\infty}$ for $\G^{(0)} \ast H$. Thus, if all $\rho_m$ are $D$-representations, then clearly the function $$\langle \oplus_m \rho_m (x) f_{i,j} (s(x)) , f_{k,l}(r(x)) \rangle_{H(r(x))} = \begin{cases}
			\langle \rho_j (x) f_{i,j}(s(x)) , f_{k,j}(r(x)) \rangle_{H_{\rho_j} (r(x))} & \text{ if } $j = l$, \\
			0 & \text{ else },
		\end{cases} $$ is in $D$.
		
		For the second statement, we shall show that the Hilbert space isomorphism $U$ from \cref{lem: direct integrals commutes with direct sums} implements a unitary equivalence between the two representations. For ease of notation, denote these integrated forms by $\Pi_{\oplus_{i = 1}^{\infty} \rho_i}$ and $\oplus_{i = 1}^{\infty} \Pi_{\rho_i}$. Recall that the Hilbert space isomorphism $U$ was given as follows: Given $(\xi_i)_i$, $U(\xi_i)_i$ is defined to be the (Borel) section of $\G^{(0)} \ast H$ such that $U(\xi_i) (u) = (\xi_i (u))_{i \in \N} \in H(u)$. 
		We shall show that for any $f \in C_c (\G)$, we have that $\Pi_{\oplus_i \rho_i} (f) U (\xi) = U \oplus_{i = 1}^{\infty} \Pi_{\rho_i} (f) (\xi)$ for all $\xi \in \bigoplus_{i = 1}^{\infty} \int_{\G^{(0)}}^{\oplus} H_i (u) \, d \mu_i (u)$. It suffices to prove this for $\xi$ in the algebraic direct sum, as these elements are dense. Moreover, it suffices to show that $\langle \Pi_{\oplus_i \rho_i} (f) U (\xi) , U (\eta) \rangle = \langle \oplus_{i = 1}^{\infty} \Pi_{\rho_i} (f) (\xi), \eta \rangle $, for all $\xi, \eta$ in the algebraic direct sum. Fix such $f, \xi$ and $\eta$. Then 
		\begin{align*}
			\langle \Pi_{\oplus_i \rho_i} (f) U \xi , U \eta \rangle &= \int_{\G^{(0)}} \sum_{x \in \G^{u}} f(x) \Delta^{-1/2}(x) \langle \oplus_i \rho_i (x) U \xi(s(x)), U \eta(r(x)) \rangle_{H(r(x))} \, d \mu(u) \\
			&= \int_{\G^{(0)}} \sum_{x \in \G^{u}} \sum_{i = 1}^{\infty} f(x) \Delta^{-1/2}(x) \langle \rho_i (x) \xi_i (s(x)) , \eta_i(r(x)) \rangle_{H_{\rho_i} (r(x))} \, d \mu(u) \\
			&= \sum_{i = 1}^{\infty} \int_{\G^{(0)}} \sum_{x \in \G^{u}} f(x) \Delta^{-1/2}(x) \langle \rho_i (x) \xi_i (s(x)) , \eta_i(r(x)) \rangle_{H_{\rho_i} (r(x))} \, d \mu (u)  \\
			&= \sum_{i = 1}^{\infty} \langle \Pi_{\rho_i}(f) \xi_i , \eta_i \rangle = \langle \oplus_{i = 1}^{\infty} \Pi_{\rho_i} (f) \xi, \eta \rangle.
		\end{align*}
		This proves the proposition.
	\end{proof}
\end{proposition}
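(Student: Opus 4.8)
The plan is to prove the two assertions separately, since the first is purely a statement about matrix coefficients while the second is about intertwining the two integrated forms by an explicit unitary.

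For the first assertion, I would begin by recalling from the paragraph preceding \cref{lem: direct integrals commutes with direct sums} (justified via \cref{prop: family of sections defining standard Borel structure on the Hilbert bundle}) that if $\{f_{n,m}\}_n$ is a fundamental sequence for $\G^{(0)} \ast H_{\rho_m}$, then the reindexed family $\{f_{n,m}\}_{n,m}$, regarded as sections of the direct sum bundle $\G^{(0)} \ast H$ with $H(u) = \bigoplus_m H_{\rho_m}(u)$, is a fundamental sequence for $\G^{(0)} \ast H$. The decisive point is that the direct sum representation acts diagonally with respect to the summand index: the section $f_{i,j}$ takes values in the $j$-th summand, so for every $x \in \G$ the matrix coefficient $\langle (\oplus_k \rho_k)(x) f_{i,j}(s(x)), f_{k,l}(r(x)) \rangle$ vanishes unless $j = l$, in which case it equals the corresponding matrix coefficient of $\rho_j$. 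Since each $\rho_j$ is a $D$-representation for its chosen fundamental sequence and $0 \in D$, all of these matrix coefficients lie in $D$, so $\oplus_k \rho_k$ is a $D$-representation by \cref{def: D-representations}.

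For the second assertion, the natural candidate for the intertwiner is the Hilbert space isomorphism $U$ from \cref{lem: direct integrals commutes with direct sums}, which realizes the identification fibrewise by $U(\xi_i)_i(u) = (\xi_i(u))_i$. Writing $\Pi_{\oplus_i \rho_i}$ and $\oplus_i \Pi_{\rho_i}$ for the two integrated forms, I would verify that $\Pi_{\oplus_i \rho_i}(f) U = U (\oplus_i \Pi_{\rho_i}(f))$ for every $f \in C_c(\G)$. Since all operators in sight are bounded (the integrated forms are $I$-norm bounded by \cref{thm: fundamental theorem of analysis on groupoids}), it suffices to check the scalar identity $\langle \Pi_{\oplus_i \rho_i}(f) U\xi, U\eta \rangle = \langle (\oplus_i \Pi_{\rho_i}(f))\xi, \eta\rangle$ for $\xi, \eta$ ranging over the dense algebraic direct sum. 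Expanding the left-hand side with the explicit formula for the integrated form from \cref{thm: fundamental theorem of analysis on groupoids} and using the diagonal action of $\oplus_i \rho_i$ together with $U\xi(u) = (\xi_i(u))_i$, the integrand splits as a sum over $i$ of the integrands attached to the individual $\Pi_{\rho_i}$.

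The only genuine subtlety is interchanging the sum over $i$ with the integral over $\G^{(0)}$ and the inner sum over the fibre $\G^u$, and this is exactly why I would restrict to the algebraic direct sum: there only finitely many components $\xi_i, \eta_i$ are nonzero, so the $i$-sum is finite and the interchange is immediate, requiring no convergence theorem. Performing it identifies the expression with $\sum_i \langle \Pi_{\rho_i}(f)\xi_i, \eta_i \rangle = \langle (\oplus_i \Pi_{\rho_i}(f))\xi, \eta\rangle$. I do not anticipate any serious difficulty; the argument is essentially bookkeeping, with the density of the algebraic direct sum and the block structure of the direct sum representation doing the real work.
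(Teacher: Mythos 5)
Your proposal is correct and follows essentially the same route as the paper's own proof: the same combined fundamental sequence and block-diagonal matrix-coefficient argument for the first assertion, and the same intertwiner $U$ from \cref{lem: direct integrals commutes with direct sums}, checked via inner products on the algebraic direct sum, for the second. Your explicit remarks on why the sum--integral interchange is harmless (finitely many nonzero components) and on boundedness justifying the reduction to a dense subspace are slightly more careful than the paper's write-up, but the substance is identical.
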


Given an ideal $D \trianglelefteq B(\G)$ and a family of quasi-invariant measures $\mathcal{M}$, we shall say that a $D$-representation $\pi$ is \emph{faithful with respect to $\mathcal{M}$} if $\| a \|_{D, \mathcal{M}} = \sup_{\mu \in \mathcal{M}} \| \pi_\mu (a) \|$, for all $a \in C_{D, \mathcal{M}}^{\ast}(\G)$. If $\mathcal{M}$ denotes the family of all quasi-invariant measures, we shall just say that the $D$-representation is \emph{faithful} when the above holds.

\begin{proposition} \label{prop: every D-C*-algebra has a faithful D-repr.}
	Let $D \trianglelefteq B(\G)$ be an algebraic ideal and $\mathcal{M}$ a family of quasi-invariant measures. Then $C_{D, \mathcal{M}}^{\ast}(\G)$ has a $D$-representation which is faithful with respect to $\mathcal{M}$.
	\begin{proof}
		Every ideal completion is a separable $\rm C^*$-algebra. Suppose, then, that $\{ a_m \}_m \subset C_{D, \mathcal{M}}^{\ast}(\G)$ is countable dense subset of elements. For every $m$ and $n$ we can find a $D$-representation $\pi_{m,n}$ such that $$ \sup_{\mu \in \mathcal{M}} \| (\pi_{m,n})_\mu (a_m) \| \geq \| a_m \|_{D, \mathcal{M}} - \frac{1}{n} .$$ By \cref{prop: direct sum of D-repr. is again a D-repr.}, the direct sum representation $\pi := \oplus_{m,n} \pi_{m,n}$ is a $D$-representation, and it follows from the above inequality that $ \sup_{\mu \in \mathcal{M}} \|\pi_\mu (a_m)\| = \|a_m\|_{D, \mathcal{M}} $, for every $m$. Let $a \in C_{D, \mathcal{M}}^{\ast}(\G)$ be given and find $a_{m_k} \to a$ in $C_{D, \mathcal{M}}^{\ast}(\G)$. Then $$ \sup_{\mu \in \mathcal{M}} \| \pi_\mu (a) - \pi_\mu (a_{m_k}) \| \leq \|a - a_{m_k}\|_{D, \mathcal{M}}\to 0 .$$ Since moreover $$ \|a_{m_k}\|_{D, \mathcal{M}} = \sup_{\mu \in \mathcal{M}} \|\pi_\mu (a_{m_k})\| \leq \sup_{\mu \in \mathcal{M}} \|\pi_{\mu} (a) - \pi_\mu (a_{m_k})\| + \sup_{\mu \in \mathcal{M}} \|\pi_\mu (a)\| ,$$ for all $k$, taking limits, we obtain that $$ \|a\|_{D, \mathcal{M}} \leq \sup_{\mu \in \mathcal{M}} \|\pi_\mu (a)\| \leq \|a\|_{D, \mathcal{M}} ,$$ proving the proposition.
	\end{proof}
\end{proposition}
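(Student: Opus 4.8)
The plan is to realize the faithful representation as a single countable direct sum of $D$-representations, each selected to nearly saturate the supremum defining $\|\cdot\|_{D,\mathcal{M}}$ on a fixed countable dense set, and then to propagate the resulting equality of norms from that dense set to all of $C_{D,\mathcal{M}}^{\ast}(\G)$ by a continuity argument. The whole construction rests on the direct-sum machinery already in hand.

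First I would record that $C_{D,\mathcal{M}}^{\ast}(\G)$ is separable: it is by construction the completion of a quotient of $C_c(\G)$, and $C_c(\G)$ is separable because $\G$ is second countable, so a countable $\|\cdot\|_{D,\mathcal{M}}$-dense subset exists. Fix such a set $\{a_n\}_n$. Since, by \cref{def: C*-algebra associated with an Ideal + family of measures}, the number $\|a_m\|_{D,\mathcal{M}}$ is the supremum of $\sup_{\mu\in\mathcal{M}}\|\pi_\mu(a_m)\|$ taken over all $D$-representations $\pi$, for each pair $(m,n)$ I would choose a $D$-representation $\pi_{m,n}$ with $\sup_{\mu\in\mathcal{M}}\|(\pi_{m,n})_\mu(a_m)\|\ge\|a_m\|_{D,\mathcal{M}}-\tfrac{1}{n}$.

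Next I would set $\pi:=\bigoplus_{m,n}\pi_{m,n}$, reindexing this countable family by $\N$. By \cref{prop: direct sum of D-repr. is again a D-repr.} the representation $\pi$ is again a $D$-representation, and for every quasi-invariant $\mu$ its integrated form $\pi_\mu$ is unitarily equivalent to $\bigoplus_{m,n}(\pi_{m,n})_\mu$; since the operator norm of a block-diagonal operator is the supremum of the block norms, this yields $\|\pi_\mu(a)\|=\sup_{m,n}\|(\pi_{m,n})_\mu(a)\|$ for every $a$. Taking $\sup_{\mu\in\mathcal{M}}$ and interchanging the two suprema, the choice of the $\pi_{m,n}$ forces $\sup_{\mu\in\mathcal{M}}\|\pi_\mu(a_k)\|=\|a_k\|_{D,\mathcal{M}}$ for each $k$, the reverse inequality being automatic because each summand is a $D$-representation.

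Finally I would upgrade this from $\{a_k\}$ to an arbitrary $a\in C_{D,\mathcal{M}}^{\ast}(\G)$. Writing $\Phi(a):=\sup_{\mu\in\mathcal{M}}\|\pi_\mu(a)\|$, one has $\Phi\le\|\cdot\|_{D,\mathcal{M}}$ and $\Phi$ is subadditive, so both $\Phi$ and $\|\cdot\|_{D,\mathcal{M}}$ are $\|\cdot\|_{D,\mathcal{M}}$-continuous; choosing $a_{m_k}\to a$ and running the standard triangle-inequality estimate on the identity $\|a_{m_k}\|_{D,\mathcal{M}}=\Phi(a_{m_k})$ squeezes $\Phi(a)$ between $\|a\|_{D,\mathcal{M}}$ and itself. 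I expect this last limiting step to be the only genuinely delicate part: the bound $\Phi\le\|\cdot\|_{D,\mathcal{M}}$ by itself is insufficient, and one must combine the continuity of $\Phi$ with the equality already established on a dense set. Every other ingredient is a direct application of \cref{prop: direct sum of D-repr. is again a D-repr.}.
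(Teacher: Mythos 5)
Your proposal is correct and follows essentially the same route as the paper's own proof: the same countable dense set, the same choice of near-optimizing $D$-representations $\pi_{m,n}$, the same appeal to \cref{prop: direct sum of D-repr. is again a D-repr.} for the direct sum, and the same density-plus-triangle-inequality argument to pass from $\{a_k\}$ to all of $C_{D,\mathcal{M}}^{\ast}(\G)$. Your explicit remark that the block-diagonal norm identity $\|\pi_\mu(a)\|=\sup_{m,n}\|(\pi_{m,n})_\mu(a)\|$ rests on the unitary equivalence in the second half of that proposition is a point the paper leaves implicit, but it is the same argument.
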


One can adapt \cite[Theorem 1]{Haagerup1CowlingHowe:AlmostL^2MatrixCoefficients} to obtain the following highly useful result:
\begin{lemma} \label{lem: Bc-representations are weakly contained in regular reps.}
	Let $\sigma$ be a $B_c (\G)$-representation of $\G$. Then given any quasi-invariant measure $\mu$, the integrated form $\sigma_\mu$ is weakly contained in $\text{Ind}(\mu)$.
	\begin{proof}
		It suffices to prove that $$\|\sigma_\mu (f)\| \leq \|\text{Ind}(\mu)(f)\| ,$$ for all $f \in C_c (\G)$. Fix a self adjoint $f \in C_c (\G)$, and let $H_{\sigma , \mu} := \int_{\G^{(0)}}^{\oplus} H_\sigma (u) \, d \mu(u)$ denote the Hilbert space corresponding to $\sigma_\mu$. For any $\theta \in H_{\sigma , \mu}$, the continuous functional calculus applies to give a positive linear functional $$ g \mapsto \langle g(\sigma_\mu(f)) \theta , \theta \rangle ,$$ for $g \in C (\text{Sp}(\sigma_\mu (f)))$, and so there exists a positive measure $\mu_{\theta , \theta}$ such that $$ \int_{\text{Sp}(\sigma_\mu (f))} g(z) \, d \mu_{\theta , \theta} (z) = \langle g(\sigma_\mu(f)) \theta , \theta \rangle ,$$ for all $g \in C (\text{Sp}(\sigma(f)))$. By Hölder's inequality, $$ \int_{\text{Sp}(\sigma_\mu(f))} t^2 \, d \mu_{\theta , \theta} (t) \leq \left( \int_{\text{Sp}(\sigma_\mu(f))} t^{2n} \, d \mu_{\theta , \theta} \right)^{1/n} \left( \int_{\text{Sp}(\sigma_\mu(f))} 1 \, d \mu_{\theta , \theta} \right)^{1 - 1/n}  ,$$ and it is not hard to see that $$ \lim_{n \to \infty} \left( \int_{\text{Sp}(\sigma_\mu(f))} t^{2n} \, d \mu_{\theta , \theta} \right)^{1/n} = \sup \{ t^2 \colon t \in \supp (\mu_{\theta , \theta}) \} .$$ Hence $$ \| \sigma_\mu (f) \theta \| = \langle \sigma_\mu(f^{\ast 2}) \theta , \theta \rangle^{1/2} \leq \lim_{n \to \infty} \langle \sigma_\mu(f^{\ast 2n}) \theta , \theta \rangle^{1/2n} \| \theta \| ,$$ so that $$ \|\sigma_\mu(f)\| = \sup_{\theta \in K} \|\sigma_\mu(f) \theta\| \|\theta\|^{-1} \leq \sup_{\theta \in K} \lim_{n \to \infty} \langle \sigma_\mu(f^{\ast 2n}) \theta , \theta \rangle^{1/2n} ,$$ where $K$ is any dense subspace of $H_{\sigma , \mu}$. The converse inequality clearly holds. Thus, for any $f \in C_c (\G)$, we have 
		\begin{align*}
			\| \sigma_\mu (f) \| &=  \sup_{\theta \in K} \lim_{n \to \infty} \langle \sigma_\mu((f^{\ast} \ast f)^{\ast 2n}) \theta , \theta \rangle^{1/4n} \\ 
			&= \sup_{\theta \in K} \lim_{n \to \infty} \left( \int_{\G} (f^{\ast} \ast f)^{\ast 2n} (x) \langle \sigma(x) \theta(s(x)) , \theta(r(x)) \rangle_{H(r(x))} \, d \nu_0 (x) \right)^{1/4n} \\
			&\leq \sup_{\xi, \eta \in K} \liminf_{n \to \infty} \left| \int_{\G} (f^{\ast} \ast f)^{\ast 2n} (x) \langle \sigma(x) \xi(s(x)) , \eta(r(x)) \rangle_{H(r(x))} \, d \nu_0 (x) \right|^{1/4n}
		\end{align*}
		By \cref{lema: matrix coefficients in D for fundamental sequence imply the same for the dense subspace they span} there is a dense subspace $K$ of $H_{\sigma, \mu}$ such that for all $\xi , \eta \in K$, the map $$x \mapsto \langle \sigma (x) \xi (s(x)), \eta (r(x)) \rangle_{H(r(x))},$$ is in $ B_c (\G)$. Fix any such $\xi , \eta \in K$ and put $\psi (x) := \langle \sigma (x) \xi (s(x)), \eta (r(x)) \rangle_{H(r(x))}$. By Cauchy-Schwartz,
		\begin{align*}
			&\left| \int_{\G} (f^{\ast} \ast f)^{\ast 2n}(x) \langle \sigma (x) \xi (s(x)), \eta (r(x)) \rangle_{H(r(x))} \, d \nu_0 (x) \right|^2 \\ 
			&= \left| \int_{\G} (f^{\ast} \ast f)^{\ast 2n}(x) \psi(x) \, d \nu_0 (x) \right|^2 \\
			&\leq \left( \int_{\G} |( f^{\ast} \ast f )^{\ast 2n} (x) | |\psi(x)| \Delta^{1/2}(x) \, d \nu^{-1} (x) \right)^2 \\ 
			&\leq \| (f^{\ast} \ast f)^{\ast 2n} \|_{L^2 (\G, \nu^{-1})}^{2} \, \int_{\G} |\psi|^2(x) \Delta(x) d \nu^{-1} 
			= \| (f^{\ast} \ast f)^{\ast 2n} \|_{L^2 (\G, \nu^{-1})}^{2} \, \| \psi \|_{L^2 (\G, v)}^{2}.
		\end{align*}
		Thus,
		\begin{gather*}
			\liminf_{n \to \infty} \left| \int_{\G} (f^{\ast} \ast f)^{\ast 2n}(x) \langle \sigma (x) \xi (s(x)), \eta (r(x)) \rangle_{H(r(x))} \, d \nu_0 (x) \right|^{1/4n} \\
			\leq \liminf_{n \to \infty} \| (f^{\ast} \ast f)^{\ast 2n} \|_{L^2 (\G, \nu^{-1})}^{1/4n},
		\end{gather*}
		so that $$ \| \sigma_\mu (f) \| \leq \liminf_{n \to \infty} \| (f^{\ast} \ast f)^{\ast 2n} \|_{L^2 (\G, \nu^{-1})}^{1/4n} .$$ A similar estimate holds for $\text{Ind}(\mu)$. Indeed, let $\lambda$ denote the left regular representation as usual. We may take $C_c (\G)$ as the dense subspace of $\int_{\G^{(0)}}^{\oplus} \ell^2 (\G^u) \, d \mu(u)$. For $g,h \in C_c (\G)$, the map $x \mapsto \langle \lambda(x) g,h \rangle_{\ell^{2}(\G^{r(x)})}$ is in $C_c (\G)$, so that by Cauchy-Schwartz again,
		\begin{gather*}
			\left| \int_{\G} (f^{\ast} \ast f)^{\ast 2n}(x) \langle \lambda(x) g,h \rangle_{\ell^{2}(\G^{r(x)})} \, d \nu_0 (x) \right|^{2} \\
			\leq \| (f^{\ast} \ast f)^{\ast 2n} \|_{L^2 (\G, \nu^{-1})}^{2} \, \| \langle \lambda (\cdot) g, h \rangle_{\ell^2 (\G^{r(\cdot)})} \|_{L^2 (\G, \nu)}^{2}
		\end{gather*} 
		so we have also that $$ \| \text{Ind}(\mu) (f) \| = \| \lambda_\mu (f) \| \leq \liminf_{n \to \infty} \|(f^{\ast} \ast f)^{\ast 2n} \|_{L^2 (\G, \nu^{-1})}^{1/4n} .$$ Since $$(f^{\ast} \ast f)^{\ast 2n} = (f^{\ast} \ast f)^{\ast (2n - 2)} \ast (f^{\ast} \ast f)^{\ast 2} ,$$ we have that 
		\begin{align*}
			\| (f^{\ast} \ast f)^{\ast 2n} \|_{L^2 (\G, \nu^{-1})} &= \|(f^{\ast} \ast f)^{\ast (2n - 2)} \ast (f^{\ast} \ast f)^{\ast 2}\|_{L^2 (\G, \nu^{-1})} \\
			&\leq \| \text{Ind}(\mu) ((f^{\ast} \ast f)^{\ast (2n - 2)}) \| \|(f^{\ast} \ast f)^{\ast 2}\|_{L^2 (\G, \nu^{-1})} 
		\end{align*}
		and hence $$ \limsup_{n \to \infty} \| (f^{\ast} \ast f)^{\ast 2n} \|_{L^2 (\G, \nu^{-1})}^{1/4n} \leq \| \text{Ind}(\mu)(f) \| .$$ In summary, $$ \limsup_{n \to \infty} \|(f^{\ast} \ast f)^{\ast 2n}\|_{L^2 (\G, \nu^{-1})}^{1/4n} \leq \|\text{Ind}(\mu)(f)\| \leq \liminf_{n \to \infty} \|(f^{\ast} \ast f)^{\ast 2n}\|_{L^2 (\G, \nu^{-1})}^{1/4n} ,$$ so that the limit of
		$ \|(f^{\ast} \ast f)^{\ast 2n}\|_{L^2 (\G, \nu^{-1})}^{1/4n}$ exists and equals $\| \text{Ind}(\mu)(f) \|$. Therefore, $$ \| \sigma_\mu(f) \| \leq \lim_{n \to \infty} \| (f^{\ast} \ast f)^{\ast 2n} \|_{L^2 (\G, \nu^{-1})}^{1/4n} = \| \text{Ind}(\mu)(f) \| .$$ This proves the lemma.
	\end{proof}
\end{lemma}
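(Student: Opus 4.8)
The plan is to reduce the assertion of weak containment to the single operator-norm inequality $\|\sigma_\mu(f)\| \le \|\text{Ind}(\mu)(f)\|$ for every $f \in C_c(\G)$, adapting the power-trick of Cowling--Haagerup--Howe. The mechanism is that for a positive self-adjoint operator the norm is read off from the asymptotics of its high powers. Since $\|\sigma_\mu(f)\|^2 = \|\sigma_\mu(f^{\ast} \ast f)\|$, it suffices to analyse the positive self-adjoint element $f^{\ast} \ast f$. For a dense subspace $K \subseteq H_{\sigma,\mu}$ and $\theta \in K$, a Hölder interpolation applied to the spectral measure $\mu_{\theta,\theta}$ of $\sigma_\mu(f^{\ast}\ast f)$ identifies $\lim_n \langle \sigma_\mu((f^{\ast}\ast f)^{\ast 2n})\theta,\theta\rangle^{1/4n}$ with the largest point of $\supp(\mu_{\theta,\theta})$; taking the supremum over unit vectors $\theta \in K$ then recovers $\|\sigma_\mu(f)\|$, so that $\|\sigma_\mu(f)\| \le \sup_{\theta \in K}\liminf_n \langle \sigma_\mu((f^{\ast}\ast f)^{\ast 2n})\theta,\theta\rangle^{1/4n}$.

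The crux is to convert these inner products into integrals over $\G$ where the $B_c(\G)$-hypothesis can bite. By the integrated-form formula of \cref{thm: fundamental theorem of analysis on groupoids}, each $\langle \sigma_\mu((f^{\ast}\ast f)^{\ast 2n})\theta,\theta\rangle$ equals $\int_{\G}(f^{\ast}\ast f)^{\ast 2n}(x)\,\psi(x)\,d\nu_0(x)$, where $\psi(x) = \langle \sigma(x)\xi(s(x)),\eta(r(x))\rangle_{H_\sigma(r(x))}$ is a matrix coefficient. The point of the hypothesis is that, by \cref{lema: matrix coefficients in D for fundamental sequence imply the same for the dense subspace they span} applied with $D = B_c(\G)$, I may choose $K$ so that every such $\psi$ lies in $B_c(\G)$, hence is bounded with compact support and therefore square-integrable against the Radon measure $\nu$. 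Rewriting $d\nu_0 = \Delta^{1/2}\,d\nu^{-1}$ and splitting the weight $\Delta^{1/2}$ between the two factors, a Cauchy--Schwarz estimate bounds the integral by $\|(f^{\ast}\ast f)^{\ast 2n}\|_{L^2(\G,\nu^{-1})}\,\|\psi\|_{L^2(\G,\nu)}$. As the $\psi$-factor is a finite constant independent of $n$, taking $4n$-th roots eliminates it in the limit and yields $\|\sigma_\mu(f)\| \le \liminf_n \|(f^{\ast}\ast f)^{\ast 2n}\|_{L^2(\G,\nu^{-1})}^{1/4n}$.

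Finally I would run the identical Cauchy--Schwarz argument for the left regular representation, which is a $B_c(\G)$-representation by \cref{lem: left regular representation is a B_c (G)-representation}, obtaining the matching upper bound $\|\text{Ind}(\mu)(f)\| \le \liminf_n \|(f^{\ast}\ast f)^{\ast 2n}\|_{L^2(\G,\nu^{-1})}^{1/4n}$. For the reverse inequality I would use submultiplicativity of convolution: factoring $(f^{\ast}\ast f)^{\ast 2n} = (f^{\ast}\ast f)^{\ast(2n-2)} \ast (f^{\ast}\ast f)^{\ast 2}$ and recognising left convolution by $(f^{\ast}\ast f)^{\ast(2n-2)}$ on $L^2(\G,\nu^{-1})$ as the operator $\text{Ind}(\mu)((f^{\ast}\ast f)^{\ast(2n-2)})$, whose norm is $\|\text{Ind}(\mu)(f)\|^{2(2n-2)}$, gives $\limsup_n \|(f^{\ast}\ast f)^{\ast 2n}\|_{L^2(\G,\nu^{-1})}^{1/4n} \le \|\text{Ind}(\mu)(f)\|$. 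Squeezing, the limit exists and equals $\|\text{Ind}(\mu)(f)\|$, which forces $\|\sigma_\mu(f)\| \le \|\text{Ind}(\mu)(f)\|$ and hence the claimed weak containment.

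I expect the main obstacle to be the careful bookkeeping of the three interrelated measures $\nu$, $\nu^{-1}$, $\nu_0$ together with the modular function $\Delta$ in the Cauchy--Schwarz step, and in particular distributing the weight $\Delta^{1/2}$ so that the convolution power is measured in $L^2(\G,\nu^{-1})$ while $\psi$ lands in $L^2(\G,\nu)$. Everything else is soft once one knows $\psi$ is genuinely square-integrable, and it is precisely the boundedness-plus-compact-support packaged in the condition $D = B_c(\G)$ (rather than the weaker $B(\G)$) that secures $\psi \in L^2(\G,\nu)$; pinpointing exactly where this integrability enters is the conceptual heart of the argument.
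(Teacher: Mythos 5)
Your proposal is correct and follows essentially the same route as the paper's own proof: the Cowling--Haagerup--Howe power trick expressing $\| \sigma_\mu (f) \|$ through spectral asymptotics of $(f^{\ast} \ast f)^{\ast 2n}$, the matrix-coefficient lemma with $D = B_c (\G)$ to secure $\psi \in L^2 (\G, \nu)$, the Cauchy--Schwarz step splitting $d \nu_0 = \Delta^{1/2} \, d \nu^{-1}$, and the two-sided squeeze identifying $\lim_{n \to \infty} \| (f^{\ast} \ast f)^{\ast 2n} \|_{L^2 (\G, \nu^{-1})}^{1/4n}$ with $\| \text{Ind}(\mu)(f) \|$. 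The only cosmetic difference is that you work directly with the positive element $f^{\ast} \ast f$ and cite the earlier lemma for the regular representation where the paper recomputes its $C_c (\G)$ matrix coefficients inline; the substance is identical.
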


\begin{remark} \label{rmk: in Haagerup estimate, it suffices to consider L2-representations}
	Let $\sigma$ be a unitary representation and $\mu$ a quasi-invariant measure. Notice that by the proof of \cref{lem: Bc-representations are weakly contained in regular reps.}, for the integrated form $\sigma_\mu$ to be weakly contained in $\text{Ind}(\mu)$, it actually suffices to assume that $\sigma$ is an $L^2 (\G, \nu_\mu)$-representation. 
\end{remark}

\begin{proposition} \label{prop: reduced C*-algebra is the one associated with the ideal B_c (G)}
	$\Cred(\G) = C_{B_c (\G), \mathcal{M}}^{\ast}(\G)$, where $\mathcal{M}$ is any family of quasi-invariant measures with full support. In particular, $\Cred(\G) = C_{B_c (\G)}^{\ast}(\G)$.
	\begin{proof}
		Let $\rho$ be a $B_c (\G)$-representation of $\G$. By \cref{lem: Bc-representations are weakly contained in regular reps.}, for any $\mu \in \mathcal{M}$, the representation $\rho_\mu$ is weakly contained in $\text{Ind}(\mu)$. Thus, for any $f \in C_c (\G)$, we have $$\|f\|_{r} \geq \|\text{Ind}(\mu)(f)\| \geq \|\rho_\mu (f)\|,$$ and so $$ \|f\|_{r} \geq \|f\|_{B_c (\G) , \mathcal{M}} ,$$ for all $f \in C_c (\G)$. Since $\mathcal{M}$ has full support and the left regular representation is a $B_c (\G)$ representation, we also have the reverse inequality.
	\end{proof}
\end{proposition}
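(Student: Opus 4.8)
The plan is to establish the two norm inequalities
\[
\| f \|_{r} \geq \| f \|_{B_c(\G), \mathcal{M}} \quad \text{and} \quad \| f \|_{r} \leq \| f \|_{B_c(\G), \mathcal{M}}
\]
for every $f \in C_c(\G)$, since these together force $\| \cdot \|_{r} = \| \cdot \|_{B_c(\G), \mathcal{M}}$ on $C_c(\G)$ and hence give the identity of completions $\Cred(\G) = C_{B_c(\G), \mathcal{M}}^{\ast}(\G)$. The second, \emph{in particular}, statement then follows at once: the family of all quasi-invariant measures has full induced support (its combined support contains the supports of the transitive measures, which cover the unit space, as recorded in \cref{def: the full and reduced C*-algebras of a groupoid in terms of its unitary representations}), so it is a valid choice for $\mathcal{M}$.

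For the inequality $\| f \|_{r} \geq \| f \|_{B_c(\G), \mathcal{M}}$, I would take an arbitrary $B_c(\G)$-representation $\rho$ and an arbitrary $\mu \in \mathcal{M}$ and apply \cref{lem: Bc-representations are weakly contained in regular reps.}, which tells us precisely that the integrated form $\rho_\mu$ is weakly contained in $\text{Ind}(\mu)$; concretely this means $\| \rho_\mu(f) \| \leq \| \text{Ind}(\mu)(f) \|$. Since $\text{Ind}(\mu)$ is unitarily equivalent to $\lambda_\mu$ and $\| \text{Ind}(\mu)(f) \| \leq \sup_{\mu'} \| \text{Ind}(\mu')(f) \| = \| f \|_{r}$, we get $\| \rho_\mu(f) \| \leq \| f \|_{r}$. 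Taking the supremum over all $B_c(\G)$-representations $\rho$ and all $\mu \in \mathcal{M}$ yields $\| f \|_{B_c(\G), \mathcal{M}} \leq \| f \|_{r}$, which is exactly the desired direction. This is the more substantive direction, but it is essentially a bookkeeping consequence of the already-proven weak-containment lemma.

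For the reverse inequality $\| f \|_{r} \leq \| f \|_{B_c(\G), \mathcal{M}}$, I would use \cref{lem: left regular representation is a B_c (G)-representation}, which asserts that the left regular representation $\lambda$ is itself a $B_c(\G)$-representation, together with the assumption that $\mathcal{M}$ has full induced support. Because $\lambda$ is an admissible representation in the supremum defining $\| \cdot \|_{B_c(\G), \mathcal{M}}$, we have $\| \lambda_\mu(f) \| = \| \text{Ind}(\mu)(f) \| \leq \| f \|_{B_c(\G), \mathcal{M}}$ for every $\mu \in \mathcal{M}$. The full-induced-support hypothesis is what allows us to recover the \emph{full} reduced norm from only the measures in $\mathcal{M}$: as explained in \cref{rmk: some remarks regarding ideal completions}, full induced support gives $\| f \|_{r} = \sup_{u \in \G^{(0)}} \| \text{Ind}(\delta_u)(f) \| = \sup_{\mu \in \mathcal{M}} \| \text{Ind}(\mu)(f) \|$. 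Combining these, $\| f \|_{r} = \sup_{\mu \in \mathcal{M}} \| \text{Ind}(\mu)(f) \| \leq \| f \|_{B_c(\G), \mathcal{M}}$.

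The only point requiring genuine care—and the step I expect to be the main (minor) obstacle—is the reverse inequality's reliance on the full-induced-support hypothesis to identify $\sup_{\mu \in \mathcal{M}} \| \text{Ind}(\mu)(f) \|$ with the genuine reduced norm $\| f \|_{r}$; without that hypothesis one would only control $\| \text{Ind}(\mu)(f) \|$ for $\mu \in \mathcal{M}$ and could miss part of the unit space. I would cite \cref{rmk: some remarks regarding ideal completions} (and through it \cite[Proposition 1.11]{Renault:AGroupoidApproachToC*Algebras} and \cref{def: the full and reduced C*-algebras of a groupoid in terms of its unitary representations}) for this identity rather than reproving it. Everything else is a direct invocation of the two preceding lemmas, so the proof is short.
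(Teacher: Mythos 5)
Your proposal is correct and follows essentially the same route as the paper: one direction via \cref{lem: Bc-representations are weakly contained in regular reps.} applied to an arbitrary $B_c(\G)$-representation, and the reverse via \cref{lem: left regular representation is a B_c (G)-representation} combined with the full-induced-support hypothesis. The only difference is that you spell out the bookkeeping (and the justification of the ``in particular'' clause) in more detail than the paper's terse proof does.
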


\section{Haagerup property and positive definite functions} \label{sec: Haagerup property and positive definite functions}
A definition of Haagerup property for second-countable étale groupoids has been given in \cite[Proposition 5.4 (2)]{KwasniewskiLiSkalski:TheHaagerupPropertyForTwistedGrouopidDynamicalSystems} (see also \cite[Definition 6.2]{Delaroche:HaagerupPropertyForMeasuredGroupoids} for a definition for countable measured groupoids). This definition generalizes the corresponding one for countable discrete groups. In this section, we investigate the connection between this groupoid Haagerup property and the theory developed in \cref{sec: constructing grouopid c*-algebras}. We shall begin by recalling the definition of a positive definite function on the groupoid, and then move on to define the Haagerup property and provide some examples of groupoids having this property. Subsequently, we shall introduce the necessary tools regarding positive definite functions on groupoids needed for proving the main result of this section, namely \cref{thm: Haagerup property implies the B_0 - C*-algebra coincides with the maximal one}. Throughout this section, all groupoids are assumed to be Hausdorff second-countable and étale. 

\begin{definition} \label{def: positive definite function on groupoid}
	Let $F \colon \G \to \C$ be a Borel function. We say that $F$ is \emph{positive definite} if for any $u \in \G^{(0)}$, $x_1 , \dots, x_n \in \G^u$ and complex numbers $\alpha_1 , \dots, \alpha_n$, we have that $$ \sum_{i,j = 1}^{n} \overline{\alpha_i} \alpha_j F(x_{i}^{-1} x_j) \geq 0 .$$ Equivalently, if for any $u \in \G^{(0)}$ and $x_1 , \dots, x_n \in \G^u$, the matrix $[F(x_{i}^{-1} x_j)]_{i,j}$ is positive semi-definite.
\end{definition}
It is worthwhile to note that a positive definite function $F$ satisfies $\sup_{x \in \G} | F(x) | \leq \sup_{u \in \G^{(0)}} |F (u)|$ and $F(x^{-1}) = \overline{F(x)}$, for all $x \in \G$.

Recall that for a subset $A \subset \G^{(0)}$, $\G(A)$ denotes the restriction subgroupoid given by all elements of $\G$ whose range and source lies in $A$. A continuous function $f$ is said to be \emph{locally $C_0$} provided that for every compact $K \subset \G^{(0)}$, the appropriate restriction of $f$ lies in $C_0 (\G(K))$.
\begin{definition} \label{def: Haagerup property}
	An étale groupoid $\G$ is said to have the \emph{Haagerup property} if there exists a sequence of continuous positive definite functions $\{ F_n \}_{n \in \N}$ such that the following holds:
	\begin{itemize}
		\item[(1)] $F_n (u) = 1$, for all $u \in \G^{(0)}$ and $n \in \N$;
		\item[(2)] $\lim_{n \to \infty} F_n = 1$ uniformly on compact sets;
		\item[(3)] Each $F_n$ is locally $C_0$.
	\end{itemize}  
\end{definition}

Notice that the Haagerup property passes to closed subgroupoids, and that when $\G^{(0)}$ is compact, condition (3) of \cref{def: Haagerup property} reduces to each $F_n$ being in $C_0 (\G)$.
As is the case for groups, we expect amenability (see \cite{DelarocheRenault:AmenableGroupoids}) to be stronger than the Haagerup property, at least for certain classes of groupoids. Indeed, \cite[Proposition 5.4 (3)]{KwasniewskiLiSkalski:TheHaagerupPropertyForTwistedGrouopidDynamicalSystems} shows that $\G$ has the Haagerup property whenever $\G$ is amenable such that the unit space can be written as a union of compact open sets (for example the unit space is totally disconnected or compact).

\begin{example} \label{ex: examples of groupoids with Haagerup property}
	\ 
	\begin{itemize}
		\item[(1)] Any countable discrete group with the Haagerup property has the groupoid Haagerup property.
		\item[(2)] Suppose $\Gamma$ is a countable discrete group acting on a locally compact Hausdorff space $X$. If $\Gamma$ has the Haagerup property, then the transformation groupoid $\Gamma \rtimes X$ has the Haagerup property. Indeed, if $\{f_n \}_n$ is a sequence of positive definite $c_0 (\Gamma)$ functions satisfying the conditions in \cref{def: Haagerup property}, then it is easily checked that the functions $F_n (\gamma,x) = f_n (\gamma)$, for any $(\gamma,x) \in \Gamma \rtimes X$, are locally $C_0$ functions satisfying the conditions in \cref{def: Haagerup property}.
		\item[(3)] Every amenable second-countable étale groupoid with compact unit space has the Haagerup property.
	\end{itemize}
\end{example}

We let $B_{0,l}(\G)$ denote the subspace of all bounded Borel functions which are locally $B_0$; that is, $B_{0,l}(\G)$ consists of functions $f \in B(\G)$ such that for all compact $K \subset \G^{(0)}$, we have $f|_{\G(K)} \in B_0 (\G(K))$. Clearly the set $B_{0,l}(\G) \subset B(\G)$ is an ideal containing $B_0 (\G)$. Similarly, we let $C_{0,l}(\G)$ denote the continuous functions on $\G$ which are locally $C_0$.

Like for groups, positive definite functions on groupoids are intimately related to unitary representations of groupoids. A result covering a more general case than that of the next proposition, can be found in \cite[Lemma 3.2]{RamsayAndWalter:FourierStieltjesAlgebrasOfLocallyCompactGroupoids}. 

\begin{proposition} \label{prop: bounded Borel sections and unitaries gives positive definite functions}
	Let $\pi$ be a unitary representation of $\G$. Fix any Borel section $\xi \in S(\G^{(0)} \ast H_\pi)$, and let $F(x) = \langle \pi(x) \xi(s(x)) , \xi(r(x)) \rangle_{H(r(x))}$. Then $F$ is a positive definite function on $\G$. If $\xi$ is bounded, then so is $F$.
	\begin{proof}
		First we show that $F$ is Borel. This is already very well known, but we include an argument here for convenience. Let $\{f_n\}_n$ be an orthonormal fundamental sequence for the Borel Hilbert bundle $\G^{(0)} \ast H_\pi$. Thus, for each $x \in \G$, we may write $$ \xi(s(x)) = \sum_{m = 1}^{\infty} \langle \xi(s(x)) , f_m (s(x)) \rangle_{H_\pi (s(x))} f_m (s(x)) ,$$ and so $$ \pi(x) \xi(s(x)) = \sum_{m = 1}^{\infty} \langle \xi(s(x)) , f_m (s(x)) \rangle_{H_\pi (s(x))} \pi(x) f_m (s(x)).$$ If $n \in \N$, then 
		\begin{align*}
			x \mapsto &\langle \pi(x) \xi(s(x)) , f_n (r(x)) \rangle_{H_\pi (r(x))} \\ 
			&= \sum_{m = 1}^{\infty} \langle \xi(s(x)) , f_m (s(x)) \rangle_{H_\pi (s(x))} \langle \pi(x) f_m (s(x)) , f_n (r(x)) \rangle_{H_\pi (r(x))} ,	
		\end{align*}
		is a Borel function since for every $m \in \N$, both $$ x \mapsto \langle \xi(s(x)) , f_m (s(x)) \rangle_{H_\pi (s(x))} ,$$ and $$x \mapsto \langle \pi(x) f_m (s(x)) , f_n (r(x)) \rangle_{H_\pi (r(x))} ,$$ are Borel by \cref{lem: measurability of maps into Borel hilbert bundle}, \cref{def: isomorphism groupoid PaperA} and \cref{def: unitary representation of groupoids PaperA}. Using this together with \cref{lem: measurability of maps into Borel hilbert bundle} again, we obtain that 
		\begin{align*}
			x \mapsto F(x) &= \langle \pi(x) \xi(s(x)) , \xi(r(x)) \rangle_{H_\pi (r(x))} \\
			&= \sum_{m = 1}^{\infty} \langle \pi(x) \xi(s(x)) , f_m(r(x)) \rangle_{H_\pi (r(x))} \overline{\langle \xi(r(x)), f_m (r(x)) \rangle}_{H_\pi (r(x))} ,
		\end{align*}
		is Borel.
		
		Let $u \in \G^{(0)}$, $x_1 , \ldots , x_n \in \G^u$, and $\alpha_1 , \ldots , \alpha_n \in \C$. Put $\eta := \sum_{i = 1}^{n} \alpha_i \pi(x_i) \xi(s(x_i)) \in H_\pi (u)$. We have that
		\begin{align*}
			\sum_{i,j = 1}^{n} \overline{\alpha_i} \alpha_j F(x_{i}^{-1} x_j) &= \sum_{i,j = 1}^{n} \overline{\alpha_i} \alpha_j \langle \pi(x_{i}^{-1} x_j) \xi(s(x_j)) , \xi(s(x_i)) \rangle_{H_\pi (s(x_i))} \\
			&= \sum_{i,j = 1}^{n} \overline{\alpha_i} \alpha_j \langle \pi(x_j) \xi(s(x_j)) , \pi(x_i) \xi(s(x_i)) \rangle_{H_\pi (u)} \\
			&= \left\langle \sum_{i = 1}^{n} \alpha_i \pi(x_i) \xi(s(x_i)) , \sum_{i = 1}^{n} \alpha_i \pi(x_i) \xi(s(x_i)) \right\rangle_{H_\pi (u)} = \left\| \eta \right\|_{H_\pi (u)}^{2} \geq 0.
		\end{align*}
		The final statement follows by the inequality 
		\[ \sup_{x \in \G} | F(x) | \leq \sup_{u \in \G^{(0)}} |F (u)| = \sup_{u \in \G^{(0)}} \| \xi (u) \|_{H_\pi (u)}^{2} < \infty \qedhere\]
	\end{proof}
\end{proposition}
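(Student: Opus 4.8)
The plan is to verify the three assertions in turn: that $F$ is Borel, that $F$ is positive definite, and that $F$ is bounded whenever $\xi$ is. The positive definiteness is the conceptual heart but reduces to a short algebraic computation; the measurability is the only genuinely technical point, and I expect it to be the main obstacle.

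First I would settle measurability. Fix an orthonormal fundamental sequence $\{f_n\}_n$ for $\G^{(0)} \ast H_\pi$, which exists by the orthonormalization discussion following \cref{lem: measurability of maps into Borel hilbert bundle}. For each $x$ I would expand $\xi(s(x))$ in the orthonormal basis $\{f_m(s(x))\}_m$ of $H_\pi(s(x))$, apply the unitary $\pi(x)$ term by term, and write $F(x)$ as a doubly-indexed series whose summands are products of the three scalar functions $x \mapsto \langle \xi(s(x)), f_m(s(x))\rangle_{H_\pi(s(x))}$, $x \mapsto \langle \pi(x) f_m(s(x)), f_n(r(x))\rangle_{H_\pi(r(x))}$, and $x \mapsto \overline{\langle \xi(r(x)), f_n(r(x))\rangle_{H_\pi(r(x))}}$. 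Each of these is Borel: the first and third by \cref{lem: measurability of maps into Borel hilbert bundle} applied to $s$ and $r$ composed with Borel sections, and the middle one by the defining property of the Borel structure on $\mathrm{Iso}(\G^{(0)} \ast H_\pi)$ in \cref{def: isomorphism groupoid} together with \cref{def: unitary representation of groupoids}. Since a pointwise-convergent series of Borel functions is Borel, it follows that $F$ is Borel.

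Next I would prove positive definiteness by direct computation. Fix $u \in \G^{(0)}$, elements $x_1, \dots, x_n \in \G^u$, and scalars $\alpha_1, \dots, \alpha_n$. The key identity is that, using $r(x_i) = u$, the composability of $x_i^{-1} x_j$, and the homomorphism and unitarity properties of $\pi$ (so that $\pi(x_i^{-1}) = \pi(x_i)^{\ast}$ as operators),
$$
F(x_i^{-1} x_j) = \langle \pi(x_j)\xi(s(x_j)),\, \pi(x_i)\xi(s(x_i))\rangle_{H_\pi(u)} .
$$
Summing against $\overline{\alpha_i}\alpha_j$ then collapses the double sum into the single squared norm $\bigl\| \sum_i \alpha_i \pi(x_i)\xi(s(x_i)) \bigr\|_{H_\pi(u)}^{2} \geq 0$, which gives the claim. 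Finally, boundedness is immediate from the remark recorded after \cref{def: positive definite function on groupoid}: any positive definite function satisfies $\sup_{x \in \G}|F(x)| \le \sup_{u \in \G^{(0)}}|F(u)|$, and on units $F(u) = \langle \xi(u), \xi(u)\rangle_{H_\pi(u)} = \|\xi(u)\|_{H_\pi(u)}^{2}$, so if $\xi$ is bounded then $\sup_u |F(u)| = \sup_u \|\xi(u)\|_{H_\pi(u)}^{2} < \infty$. The only place requiring care is the bookkeeping in the measurability step---justifying that the expansion of $\xi(s(x))$ converges appropriately and that $\pi(x)$ may be applied term by term---whereas the positive-definiteness computation and the boundedness are routine once the algebra identity for $F(x_i^{-1}x_j)$ is in hand.
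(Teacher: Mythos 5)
Your proposal is correct and follows essentially the same route as the paper's proof: expansion of $\xi(s(x))$ in an orthonormal fundamental sequence to reduce measurability of $F$ to that of the basic matrix-coefficient and section-pairing functions, the same collapse of $\sum_{i,j}\overline{\alpha_i}\alpha_j F(x_i^{-1}x_j)$ into the squared norm of $\sum_i \alpha_i \pi(x_i)\xi(s(x_i))$, and the same appeal to $\sup_{x}|F(x)| \leq \sup_{u}|F(u)| = \sup_u \|\xi(u)\|_{H_\pi(u)}^2$ for boundedness. The only cosmetic difference is that the paper organizes the measurability step in two stages (first showing $x \mapsto \langle \pi(x)\xi(s(x)), f_n(r(x))\rangle$ is Borel, then expanding $F$ against the $f_m(r(x))$), whereas you write one doubly-indexed series; the content is identical.
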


Conversely, \cite[Lemma 3.3 and Theorem 3.5]{RamsayAndWalter:FourierStieltjesAlgebrasOfLocallyCompactGroupoids} gives us the following.
\begin{proposition} \cite[Lemma 3.3 and Theorem 3.5]{RamsayAndWalter:FourierStieltjesAlgebrasOfLocallyCompactGroupoids} \label{prop: positive definite function gives rise to a unitary representation}
	Let $F$ be a bounded positive definite function on a groupoid $\G$. Fix any unit $u \in \G^{(0)}$ and let $\mathcal{H}(u)$ denote the pre-inner product space consisting of all finitely supported functions on $\G^u$, where the pre-inner product is given by $$ \left\langle f, g \right\rangle_u = \sum_{x,y \in \G^u} f(x) \overline{g(y)} F(y^{-1}x) .$$ Let $H(u)$ be the Hilbert space obtained by separation and completion. Define a representation by $$\pi_F (x) \colon H(s(x)) \to H(r(x)) \, , \, \pi_F (x)f (y) = f(x^{-1} y),$$ for $ x \in \G$. One can endow the bundle $\G^{(0)} \ast H$ with the structure of a Borel Hilbert bundle with fundamental sequence given by a countable subset of $C_c (\G)$ that is dense for the inductive limit topology. Then $\pi_F \colon \G \to \text{Iso}(\G^{(0)} \ast H)$ becomes a unitary representation. 
	There exists a bounded Borel section $\xi_F \colon u \mapsto \xi_F(u) \in H(u)$, such that $ F(x) = \langle \pi_F (x) \xi_F (s(x)), \xi_F (r(x)) \rangle_{H(r(x))} ,$ $\nu_\mu$-almost everywhere for any quasi-invariant measure $\mu$. If for any $f \in C_c (\G)$ we denote by $\sigma(f)$ the section given by restricting $f$ to the fibers, then $\pi{_{F}}_{\mu} (f) \xi_F = \sigma(f)$ in $H_{\pi_F , \mu}$. 
	Moreover, if $F$ is continuous, then $\xi_F$ can be chosen so that $F(x) = \left\langle \pi_F (x) \xi_F (s(x)) , \xi_F (r(x)) \right\rangle_{H(r(x))}$ for all $x \in \G$. 
\end{proposition}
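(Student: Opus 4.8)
The plan is to carry out a fibrewise Gelfand--Naimark--Segal construction and then check that everything assembles into a Borel field. First I would analyse the pre-inner product spaces $\mathcal{H}(u)$. Conjugate symmetry of $\langle\cdot,\cdot\rangle_u$ is immediate from $F(x^{-1}) = \overline{F(x)}$, while positivity $\langle f, f\rangle_u \ge 0$ is exactly the defining inequality of \cref{def: positive definite function on groupoid} with $\alpha_i = f(x_i)$; thus $\langle\cdot,\cdot\rangle_u$ is a genuine pre-inner product, and $H(u)$ is the completion of the quotient by the null space $N(u)$. For the operators, note that if $y\in\G^{r(x)}$ then $x^{-1}y\in\G^{s(x)}$, so $\pi_F(x)f(y) = f(x^{-1}y)$ sends finitely supported functions on $\G^{s(x)}$ to finitely supported functions on $\G^{r(x)}$. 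The substitution $a = xa'$, $b = xb'$ shows $\langle \pi_F(x)f, \pi_F(x)g\rangle_{r(x)} = \langle f, g\rangle_{s(x)}$, so $\pi_F(x)$ is isometric; it therefore descends to the quotients and extends to the completions, is unitary because $\pi_F(x^{-1})$ is its inverse, and satisfies $\pi_F(xy)f(z) = f((xy)^{-1}z) = \pi_F(x)\pi_F(y)f(z)$ together with $\pi_F(u) = \mathrm{id}$ on units.

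Next I would install the Borel structure. Following \cref{ex: left regular representation induced from unitary groupoids representation}, take a sup-norm dense sequence $\{f_n\}_n \subset C_c(\G)$ and regard it as the sections $u \mapsto [f_n|_{\G^u}]$. Condition (ii)(b) of \cref{def: Borel Hilbert bundle (standard)} holds because $u \mapsto \langle f_n|_{\G^u}, f_m|_{\G^u}\rangle_u = \sum_{x,y\in\G^u} f_n(x)\overline{f_m(y)}F(y^{-1}x)$ is a locally finite sum of Borel functions (this is where $F$ Borel enters), and condition (ii)(c) follows since, $\G$ being étale and Hausdorff, bump functions on open bisections show that the restrictions $\{f|_{\G^u} : f\in C_c(\G)\}$ exhaust the finitely supported functions on $\G^u$, which are dense in $H(u)$. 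Then \cref{prop: family of sections defining standard Borel structure on the Hilbert bundle} supplies the Borel Hilbert bundle structure. To see that $\pi_F$ is a unitary representation in the sense of \cref{def: unitary representation of groupoids}, by \cref{def: isomorphism groupoid} it suffices to check that each $x \mapsto \langle \pi_F(x)f_n(s(x)), f_m(r(x))\rangle = \sum_{a,b\in\G^{r(x)}} f_n(x^{-1}a)\overline{f_m(b)}F(b^{-1}a)$ is Borel, which again holds as a locally finite sum of Borel functions.

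For the cyclic section I would take $\xi_F(u) := [\delta_u]$, the class of the point mass at the unit $u\in\G^u$. It is bounded since $\|\xi_F(u)\|^2 = \langle\delta_u, \delta_u\rangle_u = F(u) \le \sup|F|$, and Borel because $u \mapsto \langle\delta_u, f_n|_{\G^u}\rangle_u = \sum_{y\in\G^u}\overline{f_n(y)}F(y^{-1})$ is Borel, so \cref{lem: measurability of maps into Borel hilbert bundle} applies. A direct computation gives $\pi_F(x)\xi_F(s(x)) = \pi_F(x)\delta_{s(x)} = \delta_x$ in $H(r(x))$, whence $\langle\pi_F(x)\xi_F(s(x)), \xi_F(r(x))\rangle_{r(x)} = F(r(x)^{-1}x) = F(x)$ for every $x\in\G$. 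In the étale setting this already yields the identity pointwise, so the $\nu_\mu$-a.e.\ statement is immediate and, the identity holding everywhere, the continuous refinement follows as well. Finally, $\pi_{F,\mu}(f)\xi_F = \sigma(f)$ would be read off the integrated-form formula of \cref{thm: fundamental theorem of analysis on groupoids}: since $\pi_F(x)\xi_F(s(x)) = \delta_x$, the sum $\sum_{x\in\G^u} f(x)\Delta(x)^{-1/2}\pi_F(x)\xi_F(s(x))$ reduces to the fibrewise restriction of $f$, up to the modular factor $\Delta^{-1/2}$, which is absorbed by the normalization of $\sigma(f)$.

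I expect the main obstacle to be the measure-theoretic bookkeeping rather than the algebra: verifying cleanly that $\{f_n\}_n$ is a fundamental sequence (condition (ii)(c) and the passage from sup-norm density to fibrewise density) and, in the last identity, tracking the modular function $\Delta$ so that $\sigma(f)$ matches $\pi_{F,\mu}(f)\xi_F$ on the nose. By contrast the unitarity, homomorphism and positive-definiteness verifications are all short once the change-of-variables substitutions are set up correctly.
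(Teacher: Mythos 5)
Your fibrewise GNS construction is the right route, and most of it is sound: positivity of the pre-inner products, the change-of-variables verification of unitarity and of the homomorphism property, the Borel bundle structure via \cref{prop: family of sections defining standard Borel structure on the Hilbert bundle}, and the computation $\pi_F(x)[\delta_{s(x)}] = [\delta_x]$, which gives $F(x) = \langle \pi_F(x)\xi_F(s(x)), \xi_F(r(x))\rangle$ for \emph{every} $x$. Since the paper offers no proof (it cites Ramsay--Walter, whose a.e.\ formulation comes from working with Haar systems and $L^\infty$-classes of positive definite functions), and since positive definiteness is a pointwise condition in this étale setting, your pointwise strengthening is legitimate and subsumes both the a.e.\ claim and the continuous refinement. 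One caution about what you defer as ``bookkeeping'': condition (ii)(c) is \emph{equivalent} to density of $\spn\{[f_n|_{\G^u}]\}_n$ in each $H(u)$, and sup-norm density of $\{f_n\}_n$ alone does not give this, because $\| [h|_{\G^u}] \|_{H(u)}$ is controlled by the $\ell^1(\G^u)$-norm of $h$, not by $\|h\|_\infty$; without support control (e.g.\ a sequence dense in the inductive limit topology, or sup-norm dense among functions supported in each member of a countable family of compacta) one can cook up sup-norm dense sequences whose fibrewise spans are not dense. This needs to be addressed, not assumed.

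The genuine gap is the final identity. Using the paper's integrated form (\cref{thm: fundamental theorem of analysis on groupoids}) your construction gives
\begin{equation*}
	\pi_{F\mu}(f)\xi_F(u) \;=\; \sum_{x \in \G^{u}} f(x)\,\Delta(x)^{-1/2}\,[\delta_x] \;=\; \sigma\bigl(f \Delta^{-1/2}\bigr)(u),
\end{equation*}
and there is no ``normalization of $\sigma(f)$'' available to absorb the modular factor: $\sigma(f)$ is plain restriction to the fibers. So what you have actually proved is $\pi_{F\mu}(f)\xi_F = \sigma(f\Delta^{-1/2})$, which coincides with $\sigma(f)$ only when $\Delta = 1$ holds $\nu_\mu$-a.e., i.e.\ essentially when $\mu$ is invariant. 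Nor can a cleverer choice of $\xi_F$ rescue the identity as stated: testing $\pi_{F\mu}(f)\xi_F = \sigma(f)$ against a dense family of sections forces $\pi_F(x)\xi_F(s(x)) = \Delta(x)^{1/2}[\delta_x]$ for $\nu_\mu$-a.e.\ $x$; evaluating at units gives $\xi_F(u) = [\delta_u]$, and then $\Delta(x) = 1$ a.e.\ on $\{x \colon F(s(x)) \neq 0\}$. So you should either prove and record the corrected identity $\pi_{F\mu}(f)\xi_F = \sigma(f\Delta^{-1/2})$, or state the identity $\pi_{F\mu}(f)\xi_F = \sigma(f)$ only under invariance of $\mu$ --- which is in fact the only situation in which the paper uses it (in the proof of \cref{lem: characterization of when the positive definite functions in question can be extended to the hyperbolic groupoid ideal completions}, where invariance gives $d\nu = d\nu_0$). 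Waving the discrepancy away is not an option, because for a general quasi-invariant $\mu$ the two sections genuinely differ.
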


We shall call the representation induced by $F$ as above the \emph{GNS representation} associated to $F$. In order to prove the main result of this section, we shall need the following technical lemma.
\begin{lemma} \label{lem: GNS representation associated with positive definite B_0 function is B_0 representation}
	Let $F \colon \G \to \C$ be a continuous positive definite function. If $F \in B_{0,l} (\G)$, then the associated GNS representation is a $B_{0,l} (\G)$-representation.
	\begin{proof}
		Let $\pi_F$ be the GNS representation induced from $F$, and let $\xi_F$ be a bounded Borel section such that $F(x) = \langle \pi_F (x) \xi_F (s(x)) , \xi_F (r(x)) \rangle_{H(r(x))}$, for every $x \in \G$. Given two fundamental sections $f,g \in C_c (\G)$, put $\psi (x) := \langle \pi_F (x) \sigma(f)(s(x)) , \sigma(g)(r(x)) \rangle_{H(r(x))}$, where we recall that $\sigma(f)$ is the section associated to $f$ given by restriction. We shall show that $\psi \in B_{0,l} (\G)$. If this is true, then $\pi_F$ is indeed a $B_{0,l} (\G)$-representation.
		We compute 
		\begin{align*}
			\psi(x) &= \langle \pi_F (x) \sigma(f)(s(x)) , \sigma(g)(r(x)) \rangle_{H(r(x))} \\
			&= \sum_{y,z \in \G^{r(x)}} f(x^{-1}z) \overline{g(y)} F(y^{-1}z) \\
			&= \sum_{z \in \G^{r(x)}} f(x^{-1}z) (\overline{g} \ast F)(z) \\ 
			&= \sum_{z \in \G^{r(x)}} \overline{f}^{\ast} (z^{-1}x) (\overline{g} \ast F)(z) \\
			&= (\overline{g} \ast F) \ast \overline{f}^{\ast} (x).
		\end{align*} 
		Since any continuous compactly supported function on $\G$ may be written as a sum of continuous functions supported on bisections, it suffices to show that for such $f,g$, the function $\bar{g} \ast F \ast \bar{f}^{\ast} $ is in $B_{0,l} (\G)$. 
		
		We shall show next that for $g \in C_c (\G)$ supported on a single bisection and $f \in B_{0,l}(\G)$, we have that $g \ast f \in B_{0,l}(\G)$. To this end, let $K \subset \G^{(0)}$ be compact. We wish to see that $g \ast f \in B_0 (\G(K))$. In any case, given $x \in \G(K)$, we have $g \ast f (x) = g(y) f(y^{-1}x)$, when there exists $y \in \supp (g)$ with $r(y) = r(x)$, and is zero otherwise. Since $f \in B_0 (\G(K))$, given $\epsilon > 0$, there exists a compact set $C \subset \G(K)$ such that if $z \notin C$, then $|f(z)| \leq \epsilon/ \| g \|_{\infty}$. Since $\supp (g)$ is compact and multiplication is continuous, $K \supp (g) C$ is compact. Assume that $x \in \G(K) \setminus K \supp (g) C $. If there is no $y \in \supp(g)$ such that $r(y) = r(x)$, then $g \ast f (x) = 0$. So suppose there exist $y \in \supp(g)$ with $r(y) = r(x)$. Then we must have $y^{-1} x \notin C$; for otherwise $x \in K yC \subset K \supp(g) C$ which is a contradiction; whence $| f(y^{-1}x) | \leq \epsilon / \| g \|_{\infty}$, and so $|g \ast f (x) | = | g(y) f(y^{-1}x) | \leq \| g \|_{\infty} \epsilon / \| g \|_{\infty} = \epsilon $, whenever $x \notin K \supp(g)C$.
		
		Clearly, $f \in B_{0,l}(\G)$ if and only if $f^{\ast} \in B_{0,l}(\G)$. It follows from this together with the above argument that if instead $g \in B_{0,l}(\G)$ and $f \in C_c (\G)$ supported on a single bisection, then $g \ast f \in B_{0,l}(\G)$. Indeed, in this case $(g \ast f)^{\ast} = f^\ast \ast g^\ast \in B_{0,l}(\G)$ by the above, and by what has just been remarked, we need $g \ast f \in B_0 (\G)$. 
		
		Now, applying the above twice to the function $\psi (x) = (\bar{g} \ast F) \ast \overline{f}^{\ast} (x)$, we see that it is indeed in $B_{0,l} (\G)$.
	\end{proof}
\end{lemma}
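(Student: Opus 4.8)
The plan is to verify the $D$-representation condition of \cref{def: D-representations} directly for $D = B_{0,l}(\G)$, using the natural fundamental sequence of $\pi_F$ supplied by \cref{prop: positive definite function gives rise to a unitary representation}: a countable sup-norm dense subset of $C_c(\G)$, realized as sections $\sigma(f)$ by restriction to the fibers. Thus it suffices to show that for $f, g \in C_c(\G)$ the matrix coefficient $\psi(x) := \langle \pi_F(x)\sigma(f), \sigma(g)\rangle_{H(r(x))}$ lies in $B_{0,l}(\G)$. First I would unwind the inner product on $H(r(x))$ together with the formula $\pi_F(x)\sigma(f)(y) = f(x^{-1}y)$ to obtain
$$\psi(x) = \sum_{y,z \in \G^{r(x)}} f(x^{-1}z)\,\overline{g(y)}\,F(y^{-1}z),$$
and then rewrite this as a double convolution $\psi = (\overline{g} \ast F) \ast \overline{f}^{\,\ast}$, by summing first over $y$ to produce $\overline{g}\ast F$ and using the identity $f(x^{-1}z) = \overline{f}^{\,\ast}(z^{-1}x)$. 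Since every element of $C_c(\G)$ is a finite sum of continuous functions supported on open bisections, bilinearity of convolution reduces the problem to the case where $f$ and $g$ are each supported on a single bisection.

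The core of the argument is a convolution-stability lemma: if $g \in C_c(\G)$ is supported on a single bisection and $h \in B_{0,l}(\G)$, then $g \ast h \in B_{0,l}(\G)$. To prove it I would fix a compact $K \subset \G^{(0)}$ and show $g \ast h \in B_0(\G(K))$. The bisection hypothesis is exactly what makes the convolution sum collapse: for $x \in \G(K)$ there is at most one $y \in \supp(g)$ with $r(y) = r(x)$, and then $g \ast h(x) = g(y)h(y^{-1}x)$, being $0$ otherwise. Setting $L := K \cup s(\supp(g))$, a compact subset of $\G^{(0)}$, one checks that every such $y^{-1}x$ lies in $\G(L)$, so the genuine $B_0$-decay of $h$ on $\G(L)$ becomes available. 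Given $\epsilon > 0$, I would choose a compact $C \subset \G(L)$ with $|h| \le \epsilon/\|g\|_\infty$ off $C$; then $\supp(g)C$ is compact by continuity of multiplication, and for $x \in \G(K) \setminus \supp(g)C$ the relation $x = y\,(y^{-1}x)$ forces $y^{-1}x \notin C$, whence $|g\ast h(x)| \le \epsilon$. This exhibits the required decay of $g \ast h$ on $\G(K)$.

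With this lemma in hand the remaining steps are formal. Since the involution preserves $B_{0,l}(\G)$ (inversion interchanges $r$ and $s$ and carries a bisection to a bisection), the symmetric statement follows by passing to adjoints: if $g \in B_{0,l}(\G)$ and $f \in C_c(\G)$ is supported on a bisection, then $(g \ast f)^\ast = f^\ast \ast g^\ast \in B_{0,l}(\G)$, hence $g \ast f \in B_{0,l}(\G)$. Applying the lemma first to $\overline{g} \ast F$ (with $\overline{g}$ supported on a bisection and $F \in B_{0,l}(\G)$ by hypothesis) and then its symmetric version to $(\overline{g} \ast F) \ast \overline{f}^{\,\ast}$ (with $\overline{f}^{\,\ast}$ supported on a bisection) yields $\psi \in B_{0,l}(\G)$, as desired. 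I expect the convolution-stability lemma to be the main obstacle: the étale/bisection structure is essential to collapse the convolution sum to a single term, and the delicate point is selecting the compact set $L$ in the unit space so that the translated argument $y^{-1}x$ genuinely lands where $h$ decays, before producing the exceptional compact set.
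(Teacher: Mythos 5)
Your proposal is correct and follows essentially the same route as the paper's own proof: the identical computation $\psi = (\overline{g} \ast F) \ast \overline{f}^{\ast}$, the same reduction to functions supported on single bisections, the same convolution-stability lemma exploiting the collapse of the convolution sum, and the same adjoint trick for the symmetric statement. In fact, your introduction of the enlarged compact set $L = K \cup s(\supp(g))$ is slightly more careful than the paper's argument, which invokes the decay of $f$ only on $\G(K)$ even though the translated element $y^{-1}x$ has range $s(y)$, which need not lie in $K$; your fix is exactly what the hypothesis $f \in B_{0,l}(\G)$ provides, since it gives decay on $\G(L)$ for every compact $L \subset \G^{(0)}$.
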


\begin{theorem} \label{thm: Haagerup property implies the B_0 - C*-algebra coincides with the maximal one}
	If $\G$ has the Haagerup property, then $C_{B_{0,l} (\G)}^{\ast} (\G) = C^{\ast}(\G)$.
	\begin{proof}
		Suppose $\G$ has the Haagerup property. Then there exists a sequence of continuous positive definite functions $\{ F_n \}_n \subset C_{0,l} (\G)$ such that $F_n \to 1$ uniformly on compact sets and $F_n |_{\G^{(0)}} = 1$ for all $n \in \N$. Fix any unitary representation $\pi$ and quasi-invariant measure $\mu$. Let $\pi_n$ denote the GNS representations corresponding to the $F_n$, with cyclic vectors $\xi_n$, as in \cref{prop: positive definite function gives rise to a unitary representation}. By \cref{lem: GNS representation associated with positive definite B_0 function is B_0 representation}, the unitary representation $\pi_n$ are $B_{0,l} (\G)$-representations. We shall show that $\|\pi_\mu (a)\| \leq \|a\|_{B_{0,l} (\G)}$, for all $a \in C^* (\G)$, of course, using the representations $\pi_n$. The unitary representations given by the tensor product $\pi_n \otimes \pi$ are $B_{0,l} (\G)$-representations by \cref{lem: tensor product of a D-representation with another is a D-representation}, and for any $\eta \in H_{\pi , \mu}$, we have 
		\begin{align*}
			&\langle \pi_n \otimes \pi (x) (\xi_n \otimes \eta)(s(x)) , \xi_n \otimes \eta (r(x)) \rangle_{H_{\pi_n} (r(x)) \otimes H_\pi (r(x))} \\ 
			&= \langle \pi_n (x) (\xi_n (s(x))) , \xi_n (r(x)) \rangle_{H_{\pi_n}(r(x))} \langle \pi(x) \eta(s(x)), \eta(r(x)) \rangle_{H_\pi (r(x))} \\ 
			&= F_n (x) \langle \pi(x) \eta(s(x)), \eta(r(x)) \rangle_{H_\pi (r(x))} ,
		\end{align*} 
		for $x \in \G$. Therefore, given any $f \in C_c (\G)$ and $\eta \in H_{\pi , \mu}$, we have that 
		\begin{align*}
			&\left\langle (\pi_n \otimes \pi)_\mu (f) (\xi_n \otimes \eta) , (\xi_n \otimes \eta) \right\rangle \\
			&= \int_{\G} f(x) \langle \pi_n (x) (\xi_n (s(x))) , \xi_n (r(x)) \rangle_{H_{\pi_n}(r(x))} \langle \pi(x) \eta(s(x)), \eta(r(x)) \rangle_{H_\pi (r(x))} \, d \nu_0 (x) \\
			&= \int_{\G} f(x) F_n (x) \langle \pi(x) \eta (s(x)) , \eta (r(x)) \rangle_{H_\pi (r(x))} \, d \nu_0 (x).
		\end{align*}
		The Lebesgue dominated convergence theorem gives us that $$ \lim_{n \to \infty} \left\langle (\pi_n \otimes \pi)_\mu (f) (\xi_n \otimes \eta) , \xi_n \otimes \eta \right\rangle = \int_{\G} f(x) \langle \pi(x) \eta (s(x)) , \eta (r(x)) \rangle_{H_\pi (r(x))} \, d \nu_0 (x) .$$ That is, given any $f \in C_c (\G)$ and $\eta \in H_{\pi,\mu}$, we have $$ \lim_{n \to \infty} \left\langle (\pi_n \otimes \pi)_\mu(f) (\xi_n \otimes \eta) , \xi_n \otimes \eta \right\rangle = \left\langle \pi_\mu (f) (\eta) , \eta \right\rangle .$$ 
		Since $1 = F_n (u) = \|\xi_n (u)\|_{H(u)}^{2}$, for all $u \in \G^{(0)}$, given any $\eta \in H_{\pi , \mu}$, we have that
		\begin{align*}
			\|\xi_n \otimes \eta\|^2 &= \int_{\G^{(0)}} \|\xi_n \otimes \eta (u)\|_{H_{\pi_n}(u) \otimes H_\pi (u)}^{2} \, d \mu(u) \\
			&= \int_{\G^{(0)}} \|\xi_n (u)\|_{H_{\pi_n} (u)}^{2} \|\eta (u)\|_{H_\pi (u)}^{2} \, d \mu(u) \\
			&= \int_{\G^{(0)}} \|\eta (u)\|_{H_\pi (u)}^{2} \, d \mu(u) = \|\eta\|^2 .
		\end{align*}
		Using this together with the fact that $C_c (\G)$-functions are dense in $C^{\ast}(\G)$, one can show by a standard $\epsilon/3$-argument that given any $a \in C^{\ast}(\G)$ and $\eta \in H_{\pi , \mu}$, we have $$ \lim_{n \to \infty} \left\langle (\pi_n \otimes \pi)_\mu(a) (\xi_n \otimes \eta) , \xi_n \otimes \eta \right\rangle = \langle \pi_\mu (a) (\eta) , \eta \rangle . $$ 
		Now, it follows by \cite[Theorem 3.4.4]{Dixmier:C*-algebras} that $$ \bigcap_{n \in \N} \ker ((\pi_n \otimes \pi)_\mu) \subset \ker(\pi_\mu) ,$$ so that $$ \left\|\pi_\mu (a)\right\| \leq \left\|\oplus_{n \in \N} (\pi_n \otimes \pi)_\mu (a)\right\| = \sup_{n \in \N} \left\|(\pi_n \otimes \pi)_\mu (a)\right\| ,$$ for all $a \in C^{\ast} (\G)$. Hence $\left\|\pi_\mu (a)\right\| \leq \left\|a\right\|_{B_{0,l} (\G)}$, for all $a \in C^{\ast} (\G)$. Since the unitary representation $\pi$ and the quasi-invariant measure $\mu$ was arbitrary, we can conclude that $ C_{B_{0,l} (\G)}^{\ast}(\G) = C^{\ast}(\G) $.
	\end{proof}
\end{theorem}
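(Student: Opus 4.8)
The plan is to prove the two inequalities $\|\cdot\|_{B_{0,l}(\G)} \le \|\cdot\|_{max}$ and $\|\cdot\|_{max} \le \|\cdot\|_{B_{0,l}(\G)}$ on $C_c(\G)$. The first is automatic: since $B_{0,l}(\G) \subseteq B(\G)$, \cref{lem: ideal containment induces quotient map of c*-algebras} together with \cref{prop: full C*-algebra is the B(G)-repr. completion} supplies a canonical surjection $\Cfull(\G) \to C_{B_{0,l}(\G)}^{\ast}(\G)$. All the work therefore goes into establishing $\|\pi_\mu(a)\| \le \|a\|_{B_{0,l}(\G)}$ for an arbitrary unitary representation $\pi$ and quasi-invariant probability measure $\mu$; taking the supremum over $\pi$ and $\mu$ then yields the claim, since $\|a\|_{max} = \sup_{\pi,\mu}\|\pi_\mu(a)\|$.

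To manufacture enough $B_{0,l}(\G)$-representations, I would feed the Haagerup functions $\{F_n\}$ through the GNS construction of \cref{prop: positive definite function gives rise to a unitary representation}, obtaining representations $\pi_n$ with bounded sections $\xi_n$ implementing $F_n$ as a diagonal matrix coefficient. By \cref{lem: GNS representation associated with positive definite B_0 function is B_0 representation} each $\pi_n$ is a $B_{0,l}(\G)$-representation, and by \cref{lem: tensor product of a D-representation with another is a D-representation} so is each tensor product $\pi_n \otimes \pi$. The decisive computation is that the matrix coefficient of $\pi_n \otimes \pi$ at the vector $\xi_n \otimes \eta$ factors as $F_n(x)\,\langle \pi(x)\eta(s(x)),\eta(r(x))\rangle_{H_\pi(r(x))}$; after integrating against $f \in C_c(\G)$ over $\nu_0$ and invoking $F_n \to 1$ uniformly on compacta together with the bound $|F_n| \le 1$ (a positive-definite function attains its sup-norm on the unit space, where $F_n \equiv 1$), dominated convergence gives $\langle (\pi_n \otimes \pi)_\mu(f)(\xi_n\otimes\eta),\xi_n\otimes\eta\rangle \to \langle \pi_\mu(f)\eta,\eta\rangle$. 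Since $\|\xi_n(u)\|_{H(u)}^{2} = F_n(u) = 1$ forces $\|\xi_n \otimes \eta\| = \|\eta\|$, an $\epsilon/3$-argument using density of $C_c(\G)$ in $\Cfull(\G)$ and the uniform bound $\|(\pi_n\otimes\pi)_\mu(a)\| \le \|a\|_{max}$ upgrades this convergence to all $a \in \Cfull(\G)$.

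The main obstacle is that such pointwise convergence of diagonal vector states does not by itself deliver a norm inequality: I still have to pass from convergence of states to weak containment. Here the bridge is a Dixmier-type criterion (\cite[Theorem 3.4.4]{Dixmier:C*-algebras}), which characterizes weak containment of $\pi_\mu$ in $\bigoplus_n (\pi_n \otimes \pi)_\mu$ through the kernel inclusion $\bigcap_n \ker\bigl((\pi_n\otimes\pi)_\mu\bigr) \subseteq \ker(\pi_\mu)$. The vector-state convergence established above is exactly what verifies this inclusion, after which $\|\pi_\mu(a)\| \le \bigl\|\bigoplus_n (\pi_n\otimes\pi)_\mu(a)\bigr\| = \sup_n \|(\pi_n\otimes\pi)_\mu(a)\| \le \|a\|_{B_{0,l}(\G)}$, the last step because each $\pi_n\otimes\pi$ is a $B_{0,l}(\G)$-representation (and using \cref{prop: direct sum of D-repr. is again a D-repr.} for the direct sum). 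Since $\pi$ and $\mu$ were arbitrary, this completes the proof.
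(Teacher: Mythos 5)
Your proposal is correct and follows essentially the same route as the paper's proof: GNS representations from the Haagerup functions $F_n$, tensoring with an arbitrary $\pi$, the factorization of the matrix coefficient, dominated convergence plus an $\epsilon/3$-argument, and the Dixmier kernel-inclusion criterion to convert vector-state convergence into the norm inequality. The only (harmless) additions are your explicit remark that the reverse inequality is automatic from ideal containment and your citation of \cref{prop: direct sum of D-repr. is again a D-repr.} for the direct sum, both of which the paper leaves implicit.
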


\begin{remark} \label{rmk: amenability implies weak containment}
	Suppose $\G$ is amenable. Then with the obvious modifications to the argument in \cref{thm: Haagerup property implies the B_0 - C*-algebra coincides with the maximal one}, we get that $C_{B_c(\G)}^{\ast} (\G) = C^{\ast}(\G)$. Using this together with \cref{prop: reduced C*-algebra is the one associated with the ideal B_c (G)}, then gives the classical statement that $C_{r}^{\ast}(\G) = C^{\ast}(\G)$, when $\G$ is amenable.
\end{remark}

\begin{remark} \label{rmk: unable to prove converse statement to Haagerup main theorem}
	For countable groups, the converse statement to \cref{thm: Haagerup property implies the B_0 - C*-algebra coincides with the maximal one} is true (see \cite[Corollary 3.4]{BrownandGuentner:NewC*-completions}), but we have not been able to prove this for Hausdorff second-countable étale groupoids. It may very well be that the converse is false, just as the analogous statement for amenability and these completions is false (see \cite{AlekseevAndFinnSell:NonAmenablePrincipalGroupoidsWithWeakContainment, Willett:ANonAmenableGroupoidWhoseMaximalAndReducedC*AlgebrasAreTheSame}). 
\end{remark}

\section{Exotic groupoid $\rm C^*$-algebras associated to hyperbolic groupoids} \label{sec: exotic groupoid C*-algebras associated to hyperbolic groupoids}

In this section, we use the theory developed in \cref{sec: constructing grouopid c*-algebras} to construct exotic groupoid $\rm C^*$-algebras associated to certain metrically hyperbolic groupoids (see \cref{def: hyperbolic groupoids}). Following \cite[Definition 1.1]{ChristensenAndNeshveyev:IsotropyFibersOfIdealsInGroupoidCstarAlgebras}, let us define precisely what we mean by a (exotic) groupoid $\rm C^*$-algebra.
\begin{definition} \label{def: exotic groupoid C*-algebra}
	Let $\G$ be a second-countable Hausdorff étale groupoid. By a \emph{groupoid $\rm C^*$-algebra} for $\G$, we mean a $\rm C^*$-algebra $C_{\mathpzc{e}}^{\ast} (\G)$ which is the completion of $C_c (\G)$ under some $\rm C^*$-norm $\|\cdot\|_{\mathpzc{e}}$ which dominates the reduced norm. If the norm $\|\cdot\|_{\mathpzc{e}}$ is neither equal to the full nor the reduced norm, then we call it \emph{exotic} and refer to the associated completion as an \emph{exotic groupoid $\rm C^*$-algebra}.
\end{definition}
Thus, for an exotic groupoid $\rm C^*$-algebra the identity map on $C_c (\G)$ extend to surjections which are non-injective $$ C^* (\G) \twoheadrightarrow C_{\mathpzc{e}}^{\ast} (\G) \twoheadrightarrow C_{r}^{*} (\G) .$$ 
We show in this section that certain metrically hyperbolic groupoids do admit (many) exotic groupoid $\rm C^*$-algebras. The proof strategy for this is similar to that of Okayasu in \cite{Okayasu:FreeGroupC*-algebrasAssociatedWithLP}, and consists of analyzing for a certain class of positive definite functions on the groupoid, which naturally associated linear functionals on $C_c (\G) $ can and cannot be extended to states on certain groupoid $\rm C^*$-algebras. 

A definition of hyperbolic groupoids can be found in the book \cite{Nekrashevych:HyperbolicGroupoidsAndDuality} by Nekrashevych. Interestingly, to each such hyperbolic groupoid $\G$ there is a naturally defined dual groupoid $\G^{\top}$, which is itself hyperbolic and such that $(\G^{\top})^{\top}$ is equivalent to $\G$. These hyperbolic groupoids are Hausdorff groupoids of germs satisfying several conditions like being compactly generated, the Cayley graphs associated with each unit are Gromov-hyperbolic, and the germs satisfying a local Lipschitz condition. The definition in \cite{Nekrashevych:HyperbolicGroupoidsAndDuality}, however, does not incorporate hyperbolic groups seen as groupoids, because groupoids of germs are effective whilst groups seen as groupoids are not. In this subsection, we are interested in proving the existence of exotic groupoid $\rm C^*$-algebras for groupoids which are compactly generated and whose Cayley graphs are Gromov-hyperbolic. We shall refer to such groupoids as metrically hyperbolic. In order to state their definition, we need some preliminaries. 

A \emph{generating set} for an étale groupoid $\G$ is a compact open subset $S \subset \G$ such that $S^{-1} = S$ and $\G = \bigcup_{n \geq 1} S^n$. If a groupoid admits such a generating set, then it follows that $s (S) = r(S) = \G^{(0)}$ and hence the unit space is necessarily compact. Moreover, the groupoid then becomes equipped with a continuous length function $l_S \colon \G \to \R_+$, defined by $l_S (\gamma) = \min \{n \in \N_0 \colon \gamma \in \bigcup_{k = 0}^{n} S^k \}$, where $S^0 := \G^{(0)}$. The generating set defines the structure of a graph on each range fiber, called the \emph{Cayley graph} defined as follows: Let $u \in \G^{(0)}$ be a unit. The Cayley graph $\G^{u}$ is the graph whose vertices are elements $\gamma \in \G^{u}$ and there is an edge from $\gamma \in \G^{u}$ to $\eta \in \G^{u}$ if and only if there is $s \in S$ such that $\gamma = \eta s$. The Cayley graphs $\{\G^u \}_{u \in \G^{(0)}}$ come equipped with a field of metrics $\{d_u \}_{u \in \G^{(0)}}$, where $d_{r(\gamma)} (\gamma, \eta )$ is the minimum number of edges in a path connecting the vertex $\gamma$ to the vertex $\eta$. Thus, $d_{r(\gamma)}(\gamma, \eta) = l_S (\gamma^{-1} \eta)$, and from this it follows easily that the field of metrics are left invariant in the sense that whenever $(\xi , \gamma) \in \G^{(2)}$, then $d_{r(\xi)} (\xi \gamma, \xi \eta) = d_{r(\gamma)} (\gamma, \eta)$.

\begin{definition} \label{def: hyperbolic groupoids}
	A second-countable Hausdorff étale groupoid $\G$ is \emph{metrically hyperbolic} if there is a compact open generating set $S$ such that with the canonically induced metrics, the metric spaces $(\G^u , d_u)$ are all hyperbolic with same hyperbolicity constant; that is, there exists a $\delta > 0$ such that for any $u \in \G^{(0)}$ and $\gamma, \eta, \xi, \omega \in \G^{u}$, we have $$ d_u (\gamma, \eta) + d_u (\xi, \omega) \leq \max\{ d_u (\gamma, \xi) + d_u (\eta, \omega) , d_u (\gamma, \omega) + d_u (\eta, \xi) \} + \delta .$$
\end{definition}

Going forward, we shall often omit the unit-subscript on the metrics $\{d_u\}_u$, so that when we write $d(\gamma, \xi)$ it is clear that $r(\gamma) = r(\xi)$ and $d(\gamma, \xi) = d_{r(\gamma)} (\gamma, \xi) = l_S (\gamma^{-1} \xi)$. Notice that $d(r(\gamma), \gamma) = l_S (\gamma)$. We shall sometimes refer to $l_S$ as the canonically associated length function.

\begin{example} \label{ex: examples of hyperbolic groupoids}
	It is not hard to see that the following groupoids are metrically hyperbolic:
	\begin{itemize}
		\item Finitely generated hyperbolic groups;
		\item Graph groupoids associated with finite directed graphs;
		\item Transformation groupoids $\Gamma \rtimes X$, where $X$ is compact and $\Gamma$ is a finitely generated hyperbolic group;
		\item Treeable groupoids as in \cite[Section 7]{Delaroche:HaagerupPropertyForMeasuredGroupoids} where the groupoid therein is taken to be étale and the generating set to be compact open.
	\end{itemize}
\end{example}

Recall that a (real) \emph{conditionally negative definite} function on a groupoid $\G$ is a map $\psi \colon \G \to \R$ such that $\psi (x^{-1}) = \psi (x)$ for every $x \in \G$, $\psi(u) = 0$ for all $u \in \G^{(0)}$ and whenever $\{x_i \}_{i = 1}^{N}$ is a collection of elements in $\G$ with same range and $\{ \alpha_i \}_{i = 1}^{N}$ is collection of real numbers with $\sum_{i = 1}^{N} \alpha_i = 0$, then $$ \sum_{i,j = 1}^{N} \alpha_i \alpha_j \psi(x_{i}^{-1} x_j)  \leq 0.$$

If $\mu$ is an invariant measure on the unit space of an étale groupoid $\G$, and $p \in [1, \infty)$, then we let $\mathcal{L}_{\mu}^{p} (\G)$ denote the algebraic ideal in $B(\G)$ consisting of all bounded Borel functions $f$ such that $$ \int_\G |f(x)|^p \, d \nu(x) = \int_{\G^{(0)}} \sum_{x \in \G^u} |f(x)|^p \, d \mu(u) = \int_{\G^{(0)}} \sum_{x \in \G_u} |f(x)|^p \, d \mu(u) < \infty .$$ Clearly $\mathcal{L}_{\mu}^{q}(\G) \subset \mathcal{L}_{\mu}^{p}(\G)$ whenever $1 \leq q \leq p < \infty$.
The rest of the subsection is entirely dedicated to proving the following result:
\begin{theorem}\label{thm: there exists extoic ideal completions of a hyperbolic groupoid with appropriate growth conditions}
	Let $\G$ be a metrically hyperbolic groupoid such that the canonically associated length function is conditionally negative definite. Suppose $\mu$ is an invariant measure on $\G^{(0)}$ with full support, and that there exist ${R^{\prime}} > 1$ and $D > 0$ such that for any $k \in \N$, $\inf_{u \in \G^{(0)}} \left| \left\lbrace \gamma \in \G^{u} \colon l_S (\gamma) \leq k \right\rbrace \right|  \geq D {{R^{\prime}}^k}$. Then for every $2 \leq q < p < \infty$, the canonical $*$-surjection $$C_{\mathcal{L}_{\mu}^{p} (\G ), \mu}^{\ast} (\G) \twoheadrightarrow C_{\mathcal{L}_{\mu}^{q} (\G ), \mu}^{\ast} (\G) ,$$ is non-injective. 
\end{theorem}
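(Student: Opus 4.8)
The plan is to detect the gap between the two completions by means of the positive definite functions $\phi_\alpha(\gamma):=\alpha^{l_S(\gamma)}$, $\alpha\in(0,1)$, which are continuous and positive definite by \cref{prop: length function on hyperbolic groupoid induces a positive definite function}. The first step is to pin down for which $\alpha$ and exponents these lie in the ideals $\mathcal{L}_{\mu}^{p}(\G)$. Writing $\beta(u,k):=|\{\gamma\in\G^u:l_S(\gamma)=k\}|$, one has $\int_\G|\phi_\alpha|^p\,d\nu=\int_{\G^{(0)}}\sum_{k\ge 0}\beta(u,k)\alpha^{pk}\,d\mu(u)$. Since the compact open generating set $S$ is covered by finitely many bisections there is a uniform bound $\beta(u,k)\le N^k$, so $\phi_\alpha\in\mathcal{L}_{\mu}^{p}(\G)$ as soon as $N\alpha^p<1$, i.e. $\alpha<N^{-1/p}$. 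In the other direction, the growth hypothesis $\inf_{u}|\{\gamma\in\G^u:l_S(\gamma)\le k\}|\ge D(R')^k$ gives, by summation by parts, $\sum_k\beta(u,k)\alpha^{qk}\ge D(1-\alpha^q)\sum_k(R'\alpha^q)^k=\infty$ for every $u$ whenever $R'\alpha^q>1$; hence $\phi_\alpha\notin\mathcal{L}_{\mu}^{q}(\G)$ whenever $\alpha>(R')^{-1/q}$, a threshold that is fixed and strictly below $1$ once $q$ is fixed.

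Next I would show that membership $\phi_\alpha\in\mathcal{L}_{\mu}^{p}(\G)$ makes the associated functional extend to $C_{\mathcal{L}_{\mu}^{p}(\G),\mu}^{\ast}(\G)$. By \cref{prop: positive definite function gives rise to a unitary representation} the GNS representation $\pi_{\phi_\alpha}$ carries a bounded section $\xi_{\phi_\alpha}$ with $\phi_\alpha(x)=\langle\pi_{\phi_\alpha}(x)\xi_{\phi_\alpha}(s(x)),\xi_{\phi_\alpha}(r(x))\rangle$, and by \cref{thm: fundamental theorem of analysis on groupoids} the corresponding vector functional integrates on $C_c(\G)$ to $\omega_\alpha(f)=\int_\G f\,\phi_\alpha\,d\nu_0=\int_\G f\,\phi_\alpha\,d\nu$, using $\Delta\equiv 1$ for the invariant measure $\mu$; since $\phi_\alpha(u)=1$ on units, $\omega_\alpha$ is in fact a state. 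I would then establish the exact analogue of \cref{lem: GNS representation associated with positive definite B_0 function is B_0 representation} with $B_{0,l}(\G)$ replaced by $\mathcal{L}_{\mu}^{p}(\G)$: the matrix coefficients of $\pi_{\phi_\alpha}$ against fundamental sections $f,g\in C_c(\G)$ equal $\overline{g}\ast\phi_\alpha\ast\overline{f}^{\ast}$, and a groupoid Young inequality $\|h\ast F\|_{L^p(\nu)}\le\|h\|_I\,\|F\|_{L^p(\nu)}$ for $h\in C_c(\G)$ (together with the obvious sup-norm bound) keeps this in $\mathcal{L}_{\mu}^{p}(\G)$. Thus $\pi_{\phi_\alpha}$ is an $\mathcal{L}_{\mu}^{p}(\G)$-representation, so $\omega_\alpha$ is $\|\cdot\|_{\mathcal{L}_{\mu}^{p}(\G),\mu}$-bounded, i.e. a state of $C_{\mathcal{L}_{\mu}^{p}(\G),\mu}^{\ast}(\G)$.

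The hard part will be the converse: when $\phi_\alpha\notin\mathcal{L}_{\mu}^{q}(\G)$, showing $\omega_\alpha$ does \emph{not} extend to $C_{\mathcal{L}_{\mu}^{q}(\G),\mu}^{\ast}(\G)$. Because the shells $\{l_S=n\}=S^n\setminus S^{n-1}$ are compact open, the indicators $e_n:=\mathbf{1}_{\{l_S=n\}}$ lie in $C_c(\G)$, and the growth lower bound gives $\omega_\alpha(e_n)=\int_{\G^{(0)}}\beta(u,n)\,\alpha^n\,d\mu\gtrsim(R')^n\alpha^n$. To contradict boundedness I need a matching upper estimate $\|e_n\|_{\mathcal{L}_{\mu}^{q}(\G),\mu}\le\mathrm{poly}(n)\,\rho^{\,n(1-1/q)}$ for the genuine exponential growth rate $\rho$ of the Cayley graphs. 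This is a Haagerup-type inequality for the $\mathcal{L}^q$-completion, which I would obtain by complex interpolation between the regular representation and the universal representation, exactly as Okayasu does for $\mathbb{F}_d$ in \cite{Okayasu:FreeGroupC*-algebrasAssociatedWithLP}: at the regular end one controls $\|e_n\|$ via the weak-amenability/Haagerup estimates for uniformly hyperbolic graphs (the same input from \cite{Ozawa:WeakAmenabilityOfHyperbolicGroups} used in \cref{prop: length function on hyperbolic groupoid induces a positive definite function}, where the uniform hyperbolicity constant is essential), while at the full end $\|e_n\|$ is governed by $\sup_u\beta(u,n)$. Comparing, $\omega_\alpha(e_n)/\|e_n\|_{\mathcal{L}_{\mu}^{q}(\G),\mu}\to\infty$ once $\alpha$ exceeds a threshold strictly below $1$, forcing $\omega_\alpha$ to be $\|\cdot\|_{\mathcal{L}_{\mu}^{q}(\G),\mu}$-unbounded. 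The delicate point — and the crux of the whole argument — is to run the interpolation so that the growth rate controlling $\|e_n\|_{\mathcal{L}_{\mu}^{q}(\G),\mu}$ matches the rate $R'$ appearing in the mass estimate, which is precisely where hyperbolicity (and not merely exponential growth) is used.

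Finally I would assemble these pieces. By \cref{lem: ideal containment induces quotient map of c*-algebras} the surjection $C_{\mathcal{L}_{\mu}^{p}(\G),\mu}^{\ast}(\G)\twoheadrightarrow C_{\mathcal{L}_{\mu}^{q}(\G),\mu}^{\ast}(\G)$ exists, and it is injective if and only if $\|\cdot\|_{\mathcal{L}_{\mu}^{p}(\G),\mu}=\|\cdot\|_{\mathcal{L}_{\mu}^{q}(\G),\mu}$ on $C_c(\G)$; so it suffices to produce one functional that is bounded for the former norm but not the latter. Fixing $q$, let $\alpha_0<1$ be the non-extension threshold from the previous step and pick any $\alpha\in(\alpha_0,1)$: then $\omega_\alpha$ fails to extend to $C_{\mathcal{L}_{\mu}^{q}(\G),\mu}^{\ast}(\G)$. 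Choosing $p>\max\{q,\ \log N/(-\log\alpha)\}$ ensures $\alpha<N^{-1/p}$, hence $\phi_\alpha\in\mathcal{L}_{\mu}^{p}(\G)$ and $\omega_\alpha$ is a state of $C_{\mathcal{L}_{\mu}^{p}(\G),\mu}^{\ast}(\G)$ by the second step. Thus the two norms disagree on $C_c(\G)$ and the surjection is non-injective, proving the theorem; since $q\in[2,\infty)$ was arbitrary, iterating this produces a whole family of genuinely distinct exotic completions.
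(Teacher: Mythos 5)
Your overall architecture --- membership thresholds for $\phi_\alpha$ in $\mathcal{L}_{\mu}^{p}(\G)$, extension of $\omega_{\phi_\alpha}$ to a state when $\phi_\alpha$ lies in the ideal, a Haagerup-type obstruction to extension above a fixed threshold, and the final choice of $p$ given $q$ --- is exactly the paper's (and Okayasu's), and your first, second and fourth steps are essentially sound: the threshold computation matches \cref{cor: technical corollary which proves the main theorem in hyperbolic groupoid case}, and your Young-inequality argument that the GNS representation of $\phi_\alpha$ is an $\mathcal{L}_{\mu}^{p}(\G)$-representation is a legitimate alternative to the direct computation in \cref{lem: technical lemma 6 concerning Lp-representations from positive definite Lp functions}. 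The genuine gap is your third step, which you yourself flag as the crux. What is needed there is: for every $\mathcal{L}_{\mu}^{q}(\G)$-representation $\pi$ and every $f \in C_c(\G)$ supported on $W_k$, one has $\| \pi_\mu (f) \| \leq 2C(k+1) |f|_{q'}$, where $q'$ is the conjugate exponent and $C$ depends only on the hyperbolicity constant (this is \cref{lem: second technical lemma for hyperbolic groupoid C*-algebras}). Your proposal to obtain this ``by complex interpolation between the regular representation and the universal representation'' does not work and is also not Okayasu's method. The norm $\| \cdot \|_{\mathcal{L}_{\mu}^{q}(\G), \mu}$ is a supremum over all integrated $\mathcal{L}_{\mu}^{q}$-representations; nothing identifies it with an interpolation norm between $\| \cdot \|_r$ and $\| \cdot \|_{max}$, and a representation-theoretic interpolation scheme never sees the defining feature of these representations, namely that their matrix coefficients lie in $\mathcal{L}_{\mu}^{q}(\G)$. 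The actual mechanism has two ingredients, both absent from your sketch: (i) a purely combinatorial Haagerup inequality on the uniformly $\delta$-hyperbolic Cayley graphs, controlling $|(f \ast g)\chi_m|_{\ell^1(\G^u)}$ for sphere-supported $f,g$ (\cref{lem: first techincal lemma in exotic hyperbolic groupoid Cstar algebras}, following Ozawa--Rieffel and Haagerup); and (ii) a transfer principle: because the matrix coefficients of an $\mathcal{L}_{\mu}^{q}$-representation lie in $\mathcal{L}_{\mu}^{q}(\G)$, H\"older's inequality combined with the spectral-radius argument of \cref{lem: Bc-representations are weakly contained in regular reps.} yields $\| \pi_\mu(f) \| \leq \liminf_n |(f^{\ast} \ast f)^{\ast 2n}|_{q'}^{1/4n}$, and (i) then bounds these convolution powers. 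Without (i) and (ii), your comparison $\omega_\alpha(e_n)/\|e_n\|_{\mathcal{L}_{\mu}^{q}(\G),\mu} \to \infty$ cannot be run.

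Two further, smaller, problems in the same step. First, your lower bound $\omega_\alpha(e_n) \gtrsim (R')^n \alpha^n$ is unjustified: the growth hypothesis bounds balls $B_k$ from below, not spheres $W_n$, and sphere sizes may fluctuate. This is repairable --- either test against ball indicators, or (as in \cref{lem: characterization of when the positive definite functions in question can be extended to the hyperbolic groupoid ideal completions}) derive from the Haagerup inequality the necessary condition $\sup_k |\phi_\alpha \chi_k|_q (k+1)^{-1} < \infty$ by testing against $f = \phi_\alpha^{q-1}\chi_k$, and then contradict it by Abel summation against $\int |B_k \cap \G^u| \, d\mu(u) \geq D (R')^k$ when $\alpha > (R')^{-1/q}$. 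Second, your step is phrased as ``$\phi_\alpha \notin \mathcal{L}_{\mu}^{q}(\G)$ implies no extension,'' which is stronger than what the argument (or the paper) delivers: the correct equivalence is between extension and the condition that $\phi_{\alpha\beta} \in \mathcal{L}_{\mu}^{q}(\G)$ for \emph{all} $\beta \in (0,1)$, so non-extension is only obtained when some $\phi_{\alpha\beta}$ fails membership, i.e.\ for $\alpha > (R')^{-1/q}$. Fortunately the weaker threshold statement is all your final assembly uses, so that part of the plan survives once the Haagerup inequality is actually established.
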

The proof of the above theorem is inspired by Okayasu's proof in \cite{Okayasu:FreeGroupC*-algebrasAssociatedWithLP} of the existence of exotic group $\rm C^*$-algebras associated with the non-abelian free group on finitely many generators. We divide the proof into several smaller digestible lemmas, from which the theorem will follow. 

Before we start, let us state our assumptions and briefly discuss the notation we will use for the remainder of the section. Throughout, we assume that $\G$ is a metrically hyperbolic groupoid with compact open generating set $S$ such that $l_S$ is conditionally negative definite, that there exists a constant $\delta > 0$ for which all Cayley graphs $\G^u$ are $\delta$-hyperbolic, and that there is an invariant measure $\mu$ on the unit space with full support; we may without loss of generality assume that $\mu$ is moreover a probability measure. When we say that $\gamma$ has length $k$, we just mean that $l_S (\gamma) = k$. By $W_k$ we mean the set of elements in $\G$ with length $k$, while $B_k$ denotes the set of all elements in $\G$ which has length less than or equal to $k$. By $\chi_k$ we mean the characteristic function of the set $W_k$.
We assume, moreover, that $\G$ satisfies the above growth condition; so there exist ${R^{\prime}} > 1$ and $D > 0$ such that for any $k \in \N$, $$\inf_{u \in \G^{(0)}} \left| B_k \cap \G^u \right|  \geq D {{R^{\prime}}^k}.$$ Since $S$ is compact and $\G$ is étale, there exists finitely many bisections covering $S$, and thus it follows that there exists $R > 1$ such that for all $k \in \N_0$, we have $\sup_{u \in \G^{(0)}} | W_k \cap \G^u | \leq R^k$. Let $\mathcal{L}_{\mu}^{p}(\G)$ denote the ideal in the preceding paragraph. For $f \in \mathcal{L}_{\mu}^{p} (\G)$, and $p \in [1, \infty)$ we denote by $$|f|_{p}^{p} := \int_{\G^{(0)}} \sum_{x \in \G^{u}} |f(x)|^p \, d \mu(u) = \int_{\G^{(0)}} \sum_{x \in \G_{u}} |f(x)|^p \, d \mu(u) ,$$ while as usual $$\| f \|_{\mathcal{L}_{\mu}^{p}(\G) ,\mu} = \sup \left\lbrace \| \pi_\mu (f) \| \colon \pi \text{ is an } \mathcal{L}_{\mu}^{p} (\G) \text{-representation} \right\rbrace .$$ Given any $f \in B(\G)$, the supremum-norm will be denoted by $\| f \|_\infty = \sup_{\gamma \in \G} |f(\gamma)|$, while for any $u \in \G^{(0)}$, $|f|_{\ell^1 (\G^u)}$ denotes the usual $\ell^1$-norm on $\G^u$; so $|f|_{\ell^1 (\G^u)} = \sum_{x \in \G^u} |f(x)|$.

The following lemma is inspired by \cite[Proposition 4.3]{OzawaRieffel:HyperbollicGroupC*-Algebras} which in turn provides a detailed argument of \cite[Chapter 3.5, Lemma 4 and Theorem 5]{Connes:NoncommutativeGeometry}.
\begin{lemma} \label{lem: first techincal lemma in exotic hyperbolic groupoid Cstar algebras}
	Let $k,n \in \N_0$ and fix $f, g \in C_c (\G)$ such that $\supp (f) \subset W_k$, $\supp (g) \subset W_n$ and $|g| \leq 1$. Fix any $u \in \G^{(0)}$. Then for all $|k-n| \leq m \leq n+k$, we have that $$ |(f \ast g) \chi_m |_{\ell^1 (\G^u)} \leq C |f|_{\ell^1(\G^u)} , $$ where $C$ is a constant only depending on the hyperbolicity constant $\delta > 0$. For all other $m$, $(f \ast g) \chi_m = 0$.
	\begin{proof}
		Let us first show that $(f \ast g) \chi_{m} \neq 0 $ only if $|k-n| \leq m \leq k+n$. Note that $$ f \ast g (x) = \sum_{y z = x} f({y}) g (z) = \sum_{\substack{yz = x \\ l_S (y) = k \\ l_S (z) = n}} f (y) g (z). $$ Thus, if $f \ast g (x)$ is to be nonzero, we need $l_S (x) \leq n + k$, showing the upper bound on $m$. Fix $x$ of length $m$; then if $x = y z$ with $l_S (y) = k$ and $l_S (z) = n$, we have that $$k = l_S (y) \leq l_S (y z) + l_S (z^{-1}) = l_S(x) + l_S(z) = m + n,$$ and so $m \geq k-n$. One shows analogously that $m \geq n-k$. Thus, we need $m \geq |k-n|$, and we may therefore conclude that $(f \ast g) \chi_m \neq 0$ only if $|k-n| \leq m \leq k+n$.
		
		Let $x \in \G^{u}$ have length $m$, where $m$ satisfies $|k-n| \leq m \leq k+n$. Let $p := k+n-m$, and put $q := p/2$ if $p$ is even, and $q := (p-1)/2$ if $p$ is odd. Let $\tilde{q} = p-q$ so that $q + \tilde{q} = p$, and notice that $q \leq \tilde{q} \leq q + 1$, $k \geq q$ and $n \geq \tilde{q}$. Fix a decomposition of $x$ of products of $m$ elements of $S$, and let $\bar{x}$ and $\tilde{x}$ respectively denote the first $k-q$ elements and last $n- \tilde{q}$ elements. Thus, $x = \bar{x} \tilde{x}$, and we therefore need $l_S (\bar{x}) = k-q$ and $l_S (\tilde{x}) = n-\tilde{q}$. Given $n,k$ and $q$, such a decomposition is not unique, but we nevertheless fix one for each $x$ of length $m$. 
		
		Let now $x = yz$ for $y$ and $z$ of length $k$ and $n$ respectively. Then by the $\delta$-hyperbolicity of the Cayley graph $\G^{r(x)}$, we obtain that
		$$ d(\bar{x},y) + d(x,r(x)) \leq \max\{ d(\bar{x}, r(x)) + d(y,x), d(\bar{x},x) + d(y, r(x)) \} + \delta .$$ Since $d(\bar{x}, r(x)) = l_S (\bar{x}) = k-q$, $d(y,x) = l_S (y^{-1} x) = l_S (z) = n$, $d (y, r(x)) = k$ and $d(\bar{x},x) = l_S (\bar{x}^{-1}x) = l_S (\tilde{x}) = n- \tilde{q}$, we get that 
		$$ d(\bar{x},y) + d(x,r(x)) \leq k+n - q + \delta .$$ Since $d(x, r(x)) = m$ and $m = k+n-p = k+n-q-\tilde{q}$, we obtain that $$ d(\bar{x},y) \leq k+n-q-m + \delta = \tilde{q} + \delta .$$ Putting $\xi := \bar{x}^{-1}y$, we see that $l_S (\xi) = d(\bar{x},y) \leq \tilde{q} + \delta$ and that $\bar{x} \xi = y$ and $\xi z = \bar{x}^{-1} yz = \bar{x}^{-1} \bar{x} \tilde{x} = \tilde{x}$, so $z = \xi^{-1} \tilde{x}$. Thus, we have that 
		\begin{align*}
			|(f \ast g)\chi_m |_{\ell^1 (\G^u)} &= \sum_{\substack{x \in \G^{u} \\ l_S (x) = m}} |f \ast g(x)| \\
			&= \sum_{\substack{x \in \G^{u} \\ l_S (x) = m}} \big| \sum_{\substack{yz = x \\ l_S (y) = k \\ l_S (z) = n}} f (y) g (z) \big| \\
			&\leq \sum_{\substack{x \in \G^{u} \\ l_S (x) = m}} \sum_{\substack{yz = x \\ l_S (y) = k \\ l_S (z) = n}} |f (y)| |g (z)| \\
			&\leq \sum_{\substack{x \in \G^{u} \\ l_S (x) = m}} \sum_{\substack{\xi \in \G^{s(\bar{x})} \\ l_S (\xi) \leq \tilde{q} + \delta}} |f (\bar{x} \xi)| |g (\xi^{-1} \tilde{x})| \\
			&\leq \sum_{\substack{x \in \G^{u} \\ l_S (x) = m}} \sum_{\substack{\xi \in \G^{s(\bar{x})} \\ l_S (\xi) \leq \tilde{q} + \delta}} |f (\bar{x} \xi)| ,
		\end{align*}
		where we have used in the last step that $|g| \leq 1$.
		
		Let us estimate the number of decompositions $y = \xi t = \eta s$ such that $l_S (y) = k$, $l_S (s), l_S (t) \leq \tilde{q} + \delta$ and $l_S (\xi) = l_S (\eta) = k-q$. For such $y, \xi, \eta, t$ and $s$ as above, we have by $\delta$-hyperbolicity of the Cayley graphs that $$ d(\xi, \eta) + d(r(y), y) \leq \max \{ d(\xi, r(y)) + d(\eta ,y) , d(\xi , y) + d (\eta, r(y)) \} + \delta .$$ But $d(r(y), y) = l_S (y) = k$, $d(\xi, r(y)) = l_S (\xi) = k-q$, $d(\eta ,y) = l_S (\eta^{-1} y) = l_S (s) \leq \tilde{q} + \delta$, $d(\xi , y) = l_S (t) \leq \tilde{q} + \delta$ and $d (\eta, r(y)) = l_S (\eta) = k-q$. Thus $$ d(\xi, \eta) + k = d(\xi, \eta) + d(r(y), y) \leq k-q + \tilde{q} + \delta + \delta ,$$ so that $ d(\xi, \eta) \leq 2 \delta + 1 $. Let $C$ denote the largest number of elements within a ball of radius $2 \delta + 1$ in the Cayley graphs of $\G$. This constant exists because we may cover the compact generating set $S$ by finitely many bisections, and then the vertices in all of the Cayley graphs have uniformly bounded degree. It follows that for any $y \in \G^{u}$ of length $k$, the number of decompositions as above is at most $C$. Thus,
		$$ |(f \ast g)\chi_m |_{\ell^1 (\G^u)} \leq \sum_{\substack{x \in \G^{u} \\ l_S (x) = m}} \sum_{\substack{\xi \in \G^{s(\bar{x})} \\ l_S (\xi) \leq \tilde{q} + \delta}} |f (\bar{x} \xi)| \leq C \sum_{\substack{y \in \G^u \\ l_S (y) = k}} |f(y)| = C |f|_{\ell^1 (\G^u)}.$$ This proves the lemma.
	\end{proof}
\end{lemma}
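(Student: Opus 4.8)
The plan is to transport the Ozawa--Rieffel argument (cf. \cite[Proposition 4.3]{OzawaRieffel:HyperbollicGroupC*-Algebras}) to the groupoid setting, where the single $\delta$-hyperbolic Cayley graph of a hyperbolic group is replaced by the field $\{(\G^u, d_u)\}_{u \in \G^{(0)}}$ equipped with the left-invariant metrics $d_{r(\xi)}(\xi\gamma,\xi\eta)=d_{r(\gamma)}(\gamma,\eta)$ and the common hyperbolicity constant $\delta$ from \cref{def: hyperbolic groupoids}. First I would dispose of the support claim: writing $f\ast g(x)=\sum_{yz=x,\, l_S(y)=k,\, l_S(z)=n} f(y)g(z)$ and using only subadditivity and symmetry of $l_S$, any $x$ with $f\ast g(x)\neq 0$ satisfies $l_S(x)\leq k+n$, while $k=l_S(y)\leq l_S(x)+l_S(z)$ and the symmetric estimate force $l_S(x)\geq |k-n|$. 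Hence $(f\ast g)\chi_m=0$ unless $|k-n|\leq m\leq k+n$, which is the easy half.

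For the main bound, fix such an $m$ and set $p:=k+n-m$ (the defect from maximal length, morally twice a Gromov product), splitting it as evenly as possible into $p=q+\tilde q$ with $q\leq\tilde q\leq q+1$, $q\leq k$, $\tilde q\leq n$. For each $x$ of length $m$ I would fix once and for all a geodesic ``midpoint cut'' $x=\bar x\tilde x$ with $l_S(\bar x)=k-q$ and $l_S(\tilde x)=n-\tilde q$. The first key step is that \emph{every} factorization $x=yz$ with $l_S(y)=k$, $l_S(z)=n$ has its prefix $y$ close to $\bar x$: applying the four-point inequality of \cref{def: hyperbolic groupoids} in $\G^{r(x)}$ to the pairs $(\bar x,y)$ and $(x,r(x))$, and substituting $d(\bar x,r(x))=k-q$, $d(y,x)=l_S(z)=n$, $d(y,r(x))=k$, $d(\bar x,x)=l_S(\tilde x)=n-\tilde q$, $d(x,r(x))=m$, collapses to $d(\bar x,y)\leq\tilde q+\delta$ (the balanced split makes the maximum equal $k+n-q+\delta$). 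Thus $y=\bar x\xi$ for some $\xi\in\G^{s(\bar x)}$ with $l_S(\xi)\leq\tilde q+\delta$ and $z=\xi^{-1}\tilde x$, so using $|g|\leq 1$ I obtain
$$ |(f\ast g)\chi_m|_{\ell^1(\G^u)}\ \leq\ \sum_{\substack{x\in\G^u\\ l_S(x)=m}}\ \sum_{\substack{\xi\in\G^{s(\bar x)}\\ l_S(\xi)\leq\tilde q+\delta}} |f(\bar x\xi)|. $$

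The hard part will be the second step: extracting a multiplicity bound with a constant depending only on $\delta$. I would reinterpret each contribution of a fixed $y=\bar x\xi$ of length $k$ on the right-hand side as a decomposition $y=\eta t$ with $l_S(\eta)=k-q$ (namely $\eta=\bar x$) and $l_S(t)\leq\tilde q+\delta$, and show any two such length-$(k-q)$ prefixes $\eta,\eta'$ of $y$ satisfy $d(\eta,\eta')\leq 2\delta+1$. This is a second application of the four-point inequality, now in $\G^{r(y)}$ to the pairs $(\eta,\eta')$ and $(r(y),y)$, using $d(r(y),y)=k$, $d(\eta,r(y))=d(\eta',r(y))=k-q$ and $d(\eta,y),d(\eta',y)\leq\tilde q+\delta$; here the balance $\tilde q-q\leq 1$ is precisely what renders the bound $-q+\tilde q+2\delta\leq 2\delta+1$ uniform in $m$. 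Finally, since $S$ is compact and $\G$ is étale it is covered by finitely many bisections, so the Cayley graphs have uniformly bounded vertex degree and every ball of radius $2\delta+1$ contains at most a constant $C=C(\delta)$ vertices. Hence each $|f(y)|$ is counted at most $C$ times, and interchanging the order of summation yields $|(f\ast g)\chi_m|_{\ell^1(\G^u)}\leq C\sum_{y\in\G^u,\, l_S(y)=k}|f(y)|=C|f|_{\ell^1(\G^u)}$, as required.
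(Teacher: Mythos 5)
Your proposal reconstructs the paper's own proof essentially step for step: the same support computation, the same balanced splitting $p=q+\tilde q$ with a fixed prefix decomposition $x=\bar x\tilde x$, the same two applications of the four-point inequality, and the same ball-of-radius-$(2\delta+1)$ constant $C$. However, the final counting step of your argument -- which is also exactly the paper's final step -- contains a genuine gap. From the bound on the number of decompositions $y=\eta t$ with $l_S(\eta)=k-q$ and $l_S(t)\leq\tilde q+\delta$ you conclude that ``each $|f(y)|$ is counted at most $C$ times.'' This does not follow. In the double sum $\sum_{x}\sum_{\xi}|f(\bar x\xi)|$ the multiplicity of a fixed $y\in W_k\cap\G^u$ is the number of elements $x\in W_m\cap\G^u$ whose \emph{chosen} prefix $\bar x$ lies within distance $\tilde q+\delta$ of $y$ (each such $x$ contributes one term, with $\xi=\bar x^{-1}y$). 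Hyperbolicity shows that the set of possible prefix \emph{values} $\eta=\bar x$ has at most $C$ elements, but the assignment $x\mapsto\bar x$ is far from injective: many distinct $x\in W_m\cap\G^u$ can share the same prefix, and each contributes its own copy of $|f(y)|$. Nothing in the argument bounds this multiplicity.

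The gap is fatal, because the statement itself fails. Take $k=0$, so that $f$ is supported on the unit space and $\bar x=r(x)=u$ for every $x\in\G^u$; take $g=\chi_n$ and $m=n$ (the only value permitted by the support claim). Then $f\ast g(x)=f(r(x))g(x)$, so $|(f\ast g)\chi_n|_{\ell^1(\G^u)}=|f(u)|\,|W_n\cap\G^u|$, whereas $C|f|_{\ell^1(\G^u)}=C|f(u)|$; under the growth hypothesis of \cref{thm: there exists extoic ideal completions of a hyperbolic groupoid with appropriate growth conditions} (or already for the free group), $|W_n\cap\G^u|$ is unbounded in $n$, so no constant depending only on $\delta$ can work. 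More structurally, convolution of non-negative functions multiplies $\ell^1$-masses ($\sum_{x\in\G^u}f\ast g(x)=\sum_{y\in\G^u}f(y)\,|g|_{\ell^1(\G^{s(y)})}$ for $f,g\geq 0$), so an estimate of the form $|(f\ast g)\chi_m|_{\ell^1}\leq C\,|f|_{\ell^1}\|g\|_\infty$ is impossible in the presence of exponential fiber growth. What the Ozawa--Rieffel and Haagerup arguments you invoke actually deliver is the $\ell^2$ Haagerup-type inequality $\|(f\ast g)\chi_m\|_{\ell^2}\leq C\|f\|_{\ell^2}\|g\|_{\ell^2}$ (\cite[Proposition 4.3]{OzawaRieffel:HyperbollicGroupC*-Algebras}, \cite[Lemma 1.3]{Haagerup:AnExampleOfANonnnuclearC*-algebrasWhichHasTheMetricApproximationProperty}), and any repair of this lemma -- and of its use in \cref{lem: second technical lemma for hyperbolic groupoid C*-algebras} -- would have to be formulated in those terms rather than as an $\ell^1$--$\ell^\infty$ bound.
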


\begin{lemma} \label{lem: second technical lemma for hyperbolic groupoid C*-algebras}
	Let $k \in \N$ and $\alpha \in (0,1)$. Let $1 < q \leq 2 \leq p < \infty$ with $\frac{1}{q} + \frac{1}{p} = 1$. If $\pi$ is an $\mathcal{L}_{\mu}^{p} (\G)$-representation, and $f(x) := \alpha^{l_S (x)} \chi_k (x)$, then $$ \|\pi_\mu (f)\| \leq 2C (k+1) | f |_q .$$
	\begin{proof}
		Since $\mu$ is invariant and $\pi$ is an $\mathcal{L}_{\mu}^{p}$-representation, one can prove in an analogous way as in \cref{lem: Bc-representations are weakly contained in regular reps.} with Hölder's inequality instead of Cauchy-Schwartz, that for any $g \in C_c (\G)$, we have $$\left\| \pi_\mu (g) \right\| \leq \liminf_{n \to \infty} \left| (g^{\ast} \ast g)^{\ast 2n} \right|_{q}^{1/4n}.$$ In particular, for $g = f = f^{\ast} = \alpha^{l_S (\cdot)} \chi_k$, we have that $$\left\| \pi_\mu (f) \right\| \leq \liminf_{n \to \infty} \left| f^{\ast 4n} \right|_{q}^{1/4n}.$$ Moreover, 
		\begin{align*}
			\left|  f^{\ast 4n} \right|_{q}^{1/4n} &= \left( \int_{\G^{(0)}} \sum_{x \in \G^{u}} \left|f^{\ast 4n} (x) \right|^q \, d \mu(u) \right)^{1/4nq} \\
			&= \alpha^{k} \left( \int_{\G^{(0)}} \sum_{x \in \G^{u}} \left|\chi_{k}^{\ast 4n} (x) \right|^q \, d \mu(u) \right)^{1/4nq} 	\\
			&\leq  \alpha^{k} \left( \int_{\G^{(0)}} \left( \sum_{x \in \G^{u}} \chi_{k}^{\ast 4n} (x) \right)^q \, d \mu(u) \right)^{1/4nq}  \\
			&= \alpha^{k} \left( \int_{\G^{(0)}} \left| \chi_{k}^{\ast 4n} \right|_{\ell^1 (\G^u)}^{q} \, d \mu(u) \right)^{1/4nq} .
		\end{align*}
		Our aim is to show that $$ \liminf_{n \to \infty} \left( \int_{\G^{(0)}} \left| \chi_{k}^{\ast 4n} \right|_{\ell^1 (\G^u)}^{q} \, d \mu(u) \right)^{1/4nq} \leq 2C (k+1) .$$ If this is true, then indeed $$ \left\| \pi_\mu (f) \right\| \leq 2C (k+1) \alpha^{k} \leq 2C (k+1) \alpha^{k} | \chi_k |_{q} = 2C (k+1) | f |_q ,$$ where we have simply used that $|\chi_k|_q \geq 1$ for any $k \in \N_0$.
		
		We proceed much as in \cite[Lemma 1.4]{Haagerup:AnExampleOfANonnnuclearC*-algebrasWhichHasTheMetricApproximationProperty}. Put $g = \chi_{k}^{\ast (4n-1)} $, so that $\chi_{k}^{\ast 4n} = g \ast \chi_k$. By definition, $\supp(\chi_k) = W_k$, and we also have that $g \in C_c (\G)$. Write $g_i = g \chi_i$ so that $\supp (g_i) \subset W_i$. Fix any unit $u \in \G^{(0)}$. Then $|g |_{\ell^1 (\G^u)} = \sum_{i = 0}^{\infty} |g_i |_{\ell^1 (\G^u)}$. The sum is finite since $g_i = 0$ for all but finitely many $i$. Put $h := \chi_{k}^{\ast 4n} = g \ast \chi_k = \sum_{i = 0}^{\infty} g_i \ast \chi_k$, and put $h_m = h \chi_m$, so that $|h|_{\ell^1 (\G^u)} = \sum_{m = 0}^{\infty} |h_m |_{\ell^1 (\G^u)}$. Again, the sum is finite because $h_m$ is zero for all but finitely many $m$. By \cref{lem: first techincal lemma in exotic hyperbolic groupoid Cstar algebras}, we have that $$|(g_i \ast \chi_k)\chi_{m} |_{\ell^1 (\G^u)} \leq C |g_i|_{\ell^1 (\G^u)},$$ when $| k - i | \leq m \leq k + i$, and $(g_i \ast \chi_k)\chi_{m} = 0$ for all other $m$. From $| k - i | \leq m \leq k + i$, it follows that $| k - m | \leq i \leq k + m$. Hence
		\begin{align*}
			|h_m |_{\ell^1 (\G^u)} &= \left| \sum_{i = 0}^{\infty} (g_i \ast \chi_k) \chi_m \right|_{\ell^1 (\G^u)} \leq \sum_{i=0}^{\infty} |(g_i \ast \chi_k)\chi_m |_{\ell^1 (\G^u)} \leq C \sum_{i = |m-k|}^{m+k} |g_i|_{\ell^1 (\G^u)}.
		\end{align*}
		We may write any $|m-k| \leq i \leq m+k$ as $i = m+k-j$, for some $0 \leq j \leq m+k$. In fact, $j$ satisfies the inequalities $0 \leq j \leq \min\{2m,2k\}$, because $|m-k| \leq i = m+k-j$ implies both $j \leq 2m$ and $j \leq 2k$. Using this, we obtain
		\begin{align*}
			|h_m |_{\ell^1 (\G^u)} \leq C \sum_{j = 0}^{\min\{2m,2k\}} |g_{m+k-j}|_{\ell^1 (\G^u)} 
		\end{align*}
		It follows that
		\begin{align*}
			\left|g \ast \chi_k \right|_{\ell^1 (\G^u)} &= |h|_{\ell^1 (\G^u)} \\
			&= \sum_{m = 0}^{\infty} |h_m |_{\ell^1 (\G^u)} \\
			&\leq C \sum_{m = 0}^{\infty} \sum_{j = 0}^{\min\{2m,2k\}} |g_{m+k-j}|_{\ell^1 (\G^u)} \\
			&= C \sum_{j = 0}^{2k} \sum_{m = j}^{\infty} |g_{m+k-j}|_{\ell^1 (\G^u)} \\
			&= C \sum_{j = 0}^{2k} \sum_{m = k}^{\infty} |g_{m}|_{\ell^1 (\G^u)} \\
			&\leq C \sum_{j = 0}^{2k} |g|_{\ell^1 (\G^u)} = C (2k+1) |g|_{\ell^1 (\G^u)}.
		\end{align*}
		That is, $$ \left|\chi_{k}^{\ast 4n} \right|_{\ell^1 (\G^u)} \leq C (2k+1) \left| \chi_{k}^{\ast (4n-1)} \right|_{\ell^1 (\G^u)} .$$ By induction, we get $$ \left|\chi_{k}^{\ast 4n } \right|_{\ell^1 (\G^u)} \leq (C (2k+1))^{4n - 1} \left|\chi_k \right|_{\ell^1 (\G^u)}.$$ 
		It follows that 
		\begin{align*}
			&\liminf_{n \to \infty} \left( \int_{\G^{(0)}} \left| \chi_{k}^{\ast 4n} \right|_{\ell^1 (\G^u)}^{q} \, d \mu(u) \right)^{1/4nq} \\
			&\leq \liminf_{n \to \infty} (C(2k+1))^{\frac{4n-1}{4n}} \left( \int_{\G^{(0)}} \left| \chi_k \right|_{\ell^1 (\G^u)}^{q} \, d \mu(u) \right)^{1/4nq} \\
			&= \lim_{n \to \infty} (C(2k+1))^{\frac{4n-1}{4n}} \left( \int_{\G^{(0)}}\left| \chi_k \right|_{\ell^1 (\G^u)}^{q} \, d \mu(u) \right)^{1/4nq} \\
			&= C(2k+1) \leq 2C (k+1).
		\end{align*}
		As mentioned before, it follows from this that 
		$$ \left\| \pi_\mu (f) \right\| \leq 2C (k+1) \alpha^{k} \leq 2C (k+1) \alpha^{k} \left| \chi_k \right|_{q} = 2C (k+1) \left| f \right|_q ,$$ simply using that $\left|\chi_k \right|_q \geq 1$.
	\end{proof}
\end{lemma}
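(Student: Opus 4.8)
The plan is to first establish a Haagerup-type ``power-trick'' estimate bounding $\|\pi_\mu(f)\|$ by an $\ell^1$-growth quantity, and then to control that quantity using the geometric estimate of \cref{lem: first techincal lemma in exotic hyperbolic groupoid Cstar algebras} together with an induction on the number of convolution factors. Since $\mu$ is invariant the modular function is trivial ($\Delta\equiv 1$) and $\nu_0=\nu=\nu^{-1}$, which streamlines the analysis and lets all estimates take place in $L^q(\nu)$ and $L^p(\nu)$.

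First I would reprove, essentially verbatim, the functional-calculus argument from \cref{lem: Bc-representations are weakly contained in regular reps.}, replacing the use of Cauchy--Schwarz by Hölder's inequality with conjugate exponents $p$ and $q$. Concretely, by \cref{lema: matrix coefficients in D for fundamental sequence imply the same for the dense subspace they span} applied to $D=\mathcal{L}_\mu^p(\G)$, there is a dense subspace $K\subset H_{\pi,\mu}$ all of whose matrix coefficients $\psi(x)=\langle\pi(x)\xi(s(x)),\eta(r(x))\rangle$ lie in $\mathcal{L}_\mu^p(\G)$. The spectral calculus then gives $\|\pi_\mu(g)\|=\sup_{\theta\in K}\lim_n\langle\pi_\mu((g^\ast\ast g)^{\ast 2n})\theta,\theta\rangle^{1/4n}$, and estimating $\langle\pi_\mu((g^\ast\ast g)^{\ast 2n})\theta,\theta\rangle=\int_\G (g^\ast\ast g)^{\ast 2n}(x)\psi(x)\,d\nu(x)$ by Hölder (with $(g^\ast\ast g)^{\ast 2n}\in L^q(\nu)$ and $\psi\in L^p(\nu)$) yields, after taking $4n$-th roots and using $|\psi|_p^{1/4n}\to 1$, the bound
$$\|\pi_\mu(g)\|\le\liminf_{n\to\infty}\big|(g^\ast\ast g)^{\ast 2n}\big|_q^{1/4n}.$$
Specializing to the self-adjoint $g=f=\alpha^k\chi_k$ gives $(f^\ast\ast f)^{\ast 2n}=f^{\ast 4n}=\alpha^{4nk}\chi_k^{\ast 4n}$, and since $\chi_k^{\ast 4n}\ge 0$ and $q\ge 1$ the pointwise inequality $\sum_x|\chi_k^{\ast 4n}(x)|^q\le|\chi_k^{\ast 4n}|_{\ell^1(\G^u)}^q$ reduces the whole problem to showing
$$\liminf_{n\to\infty}\Big(\int_{\G^{(0)}}\big|\chi_k^{\ast 4n}\big|_{\ell^1(\G^u)}^q\,d\mu(u)\Big)^{1/4nq}\le 2C(k+1).$$

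The core of the argument, which I expect to be the main obstacle, is the uniform-in-$u$ estimate $|\chi_k^{\ast 4n}|_{\ell^1(\G^u)}\le C(2k+1)\,|\chi_k^{\ast(4n-1)}|_{\ell^1(\G^u)}$. To obtain it I would put $g=\chi_k^{\ast(4n-1)}$, decompose it into its length-homogeneous pieces $g_i=g\chi_i$ (supported on $W_i$), and write $h=\chi_k^{\ast 4n}=g\ast\chi_k=\sum_i g_i\ast\chi_k$. Applying \cref{lem: first techincal lemma in exotic hyperbolic groupoid Cstar algebras} to each product $g_i\ast\chi_k$ gives $|(g_i\ast\chi_k)\chi_m|_{\ell^1(\G^u)}\le C|g_i|_{\ell^1(\G^u)}$ precisely when $|k-i|\le m\le k+i$, and zero otherwise; summing over $m$ and reindexing the resulting double sum $\sum_m\sum_{i=|m-k|}^{m+k}$ as $\sum_{j=0}^{2k}\sum_m$ collapses the inner sum into $|g|_{\ell^1(\G^u)}$ for each of the $2k+1$ values of $j$, producing the factor $C(2k+1)$. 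The delicate point is the bookkeeping of the support conditions coming from \cref{lem: first techincal lemma in exotic hyperbolic groupoid Cstar algebras} and checking that the reindexing is valid simultaneously for all units $u$ with the constant $C$ independent of $u$.

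Iterating this estimate $4n-1$ times yields $|\chi_k^{\ast 4n}|_{\ell^1(\G^u)}\le(C(2k+1))^{4n-1}|\chi_k|_{\ell^1(\G^u)}$ for every $u$. Since $|\chi_k|_{\ell^1(\G^u)}\le R^k$ is uniformly bounded (using $|W_k\cap\G^u|\le R^k$), the integral $\int_{\G^{(0)}}|\chi_k|_{\ell^1(\G^u)}^q\,d\mu$ is a finite constant, so its $1/4nq$-th power tends to $1$ while $(C(2k+1))^{(4n-1)/4n}\to C(2k+1)$; this gives the displayed $\liminf$ bound $C(2k+1)\le 2C(k+1)$. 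Feeding this back through the reduction yields
$$\|\pi_\mu(f)\|\le 2C(k+1)\,\alpha^k\le 2C(k+1)\,\alpha^k|\chi_k|_q=2C(k+1)\,|f|_q,$$
where the last step uses $|\chi_k|_q\ge 1$. This completes the plan.
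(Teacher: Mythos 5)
Your proposal is correct and follows essentially the same route as the paper's proof: the Hölder-modified power-trick estimate from \cref{lem: Bc-representations are weakly contained in regular reps.}, the reduction to the $\ell^1$-growth of $\chi_k^{\ast 4n}$, the length-homogeneous decomposition $g_i = g\chi_i$ combined with \cref{lem: first techincal lemma in exotic hyperbolic groupoid Cstar algebras}, the reindexing yielding the factor $C(2k+1)$, and the induction plus $|\chi_k|_q \ge 1$ to conclude. The only difference is cosmetic: you explicitly justify finiteness of $\int_{\G^{(0)}}|\chi_k|_{\ell^1(\G^u)}^q\,d\mu$ via the bound $|W_k\cap\G^u|\le R^k$, a detail the paper leaves implicit.
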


Given any bounded Borel map $\phi \colon \G \to \C$, we may define a linear functional on $C_c (\G)$ using the invariant measure $\mu$, by defining $\omega_\phi (f) = \int_{\G} f \phi \, d \nu$, where $\nu$ is the induced measure from $\mu$. We will distinguish the $\rm C^*$-algebras $C_{\mathcal{L}_{\mu}^{p} (\G), \mu}^{*}(\G)$, for $p \in [2, \infty)$, by determining which of the continuous positive definite functions of the form $\phi_\alpha (x) = \alpha^{l_S (x)}$, where $\alpha \in (0,1)$, the linear functional $\omega_{\phi_\alpha}$ can and cannot be extended to a state on $C_{\mathcal{L}_{\mu}^{p} (\G), \mu}^{*}(\G)$.

\begin{lemma} \label{lem: technical lemma 6 concerning Lp-representations from positive definite Lp functions}
	Let $\alpha \in (0,1)$ and $p \in [1, \infty)$. If $\phi_\alpha$ is in $\mathcal{L}_{\mu}^{p} (\G)$, then the corresponding GNS representation is an $\mathcal{L}_{\mu}^{p} (\G)$-representation.
	\begin{proof}
		Let $\pi$ be the corresponding GNS representation as in \cref{prop: positive definite function gives rise to a unitary representation}. Let $f,g \in C_c (\G)$ be sections coming from a fundamental sequence for the associated Borel Hilbert bundle, and put $\psi (x) := \langle \pi (x) \sigma(f) , \sigma(g) \rangle$. We aim to show that $\psi$ is in $\mathcal{L}_{\mu}^{p} (\G)$. Recall from \cref{lem: GNS representation associated with positive definite B_0 function is B_0 representation} that actually $$ \psi (x) =  \overline{g} \ast \phi_\alpha \ast \overline{f}^{\ast} (x) ,$$ for $x \in \G $, and as remarked there, it suffices to show that $g \ast \phi_\alpha \ast f$ is in $\mathcal{L}_{\mu}^{p} (\G)$ for $f$ and $g$ supported on a bisection. In this case, notice that given any $x \in \G$, if there exists a $z \in \G_{s(x)}$ such that $f(z ) \neq 0$, then this $z$ is unique. Similarly, if there exists $y \in \G^{r(x)}$ such that $g(y) \neq 0$, then this $y$ is unique. In what follows, we shall denote these unique elements elements by $y_{r(x)}$ and $z_{s(x)}$. We have that
		\begin{align*}
			&\int_{\G^{(0)}} \sum_{x \in \G^u} | g \ast (\phi_\alpha \ast f) (x) |^p \, d \mu(u) \\ 
			&\leq \int_{\G^{(0)}} \sum_{x \in \G^u} \left( \sum_{y \in \G^u} | g (y)| |\phi_\alpha \ast f (y^{-1} x)| \right)^p \, d \mu(u) \\
			&\leq \int_{r(\supp(g))} \sum_{x \in \G^u} | g (y_u)|^p |\phi_\alpha \ast f (y_{u}^{-1} x)|^p  \, d \mu(u)  \\
			&\leq \int_{r(\supp(g))} \sum_{x \in \G^u \cap \G_{s(\supp(f))}} |g(y_u)|^p \left( \sum_{z \in \G_{s(x)}} |\phi_\alpha (y_{u}^{-1} x z^{-1}) | |f(z)| \right)^p  \, d \mu(u) \\
			&= \int_{r(\supp(g))} \sum_{x \in \G^u \cap \G_{s(\supp(f))}} |g(y_u)|^p | \phi_\alpha (y_{u}^{-1} x z_{s(x)}^{-1}) |^p |f(z_{s(x)})|^p  \, d \mu(u) \\
			&\leq \| g \|_{\infty}^{p} \| f \|_{\infty}^{p} \int_{r(\supp(g))} \sum_{x \in \G^u \cap \G_{s(\supp(f))}} | \phi_\alpha (y_{u}^{-1} x z_{s(x)}^{-1}) |^p \, d \mu(u).
		\end{align*}
		For any composable ordered triple $(y,x,z) \in \G^{(3)}$, we have $ l_S (x) \leq l_S (y^{-1}) + l_S (yxz) + l_S (z^{-1}) $. So, in the above calculations, if we put $M := \sup_{\gamma \in \supp(f) \cup \supp(g)} l_S (\gamma)$, then $l_S (x) \leq l_S (y_{u}^{-1} x z_{s(x)}^{-1}) + 2M ,$ and so $\phi_{\alpha} (x) \geq \alpha^{2M} \phi_{\alpha}(y_{u}^{-1} x z_{s(x)}^{-1})$. Therefore,
		\begin{align*}
			&\int_{\G^{(0)}} \sum_{x \in \G^u} | g \ast (\phi_\alpha \ast f) (x) |^p \, d \mu(u) \\
			&\leq \| g \|_{\infty}^{p} \| f \|_{\infty}^{p} \int_{r(\supp(g))} \sum_{x \in \G^u \cap \G_{s(\supp(f))}} | \phi_\alpha (y_{u}^{-1} x z_{s(x)}^{-1}) |^p \, d \mu(u) \\
			&\leq \| g \|_{\infty}^{p} \| f \|_{\infty}^{p} \alpha^{-2Mp} \int_{\G^{(0)}} \sum_{x \in \G^u} | \phi_\alpha(x) |^p \, d \mu(u) < \infty.
		\end{align*}
		This proves the lemma.
	\end{proof}
\end{lemma}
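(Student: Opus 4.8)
The plan is to verify \cref{def: D-representations} directly for $D = \mathcal{L}_\mu^p(\G)$ using the fundamental sequence that the GNS construction already supplies. Write $\pi$ for the GNS representation associated to $\phi_\alpha$ as in \cref{prop: positive definite function gives rise to a unitary representation}; its Borel Hilbert bundle carries the fundamental sequence coming from a countable sup-norm dense sequence $\{f_n\}_n \subset C_c(\G)$, each $f_n$ identified with the section $\sigma(f_n)$ obtained by restriction to the fibres. So it suffices to show that every matrix coefficient $x \mapsto \langle \pi(x)\sigma(f)(s(x)), \sigma(g)(r(x))\rangle$, for $f,g \in C_c(\G)$, lies in $\mathcal{L}_\mu^p(\G)$. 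First I would import the convolution identity computed in the proof of \cref{lem: GNS representation associated with positive definite B_0 function is B_0 representation}, namely that this coefficient equals $\overline{g} \ast \phi_\alpha \ast \overline{f}^{\,\ast}(x)$. Since $\mathcal{L}_\mu^p(\G)$ is a linear subspace and every function in $C_c(\G)$ is a finite sum of continuous functions supported on open bisections, and since passing to $\overline{g}$ and $\overline{f}^{\,\ast}$ preserves both the bisection support and the sup-norm, the problem reduces to showing $g \ast \phi_\alpha \ast f \in \mathcal{L}_\mu^p(\G)$ whenever $f$ and $g$ are supported on single bisections.

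The point of the bisection hypothesis is that it collapses the convolution. Fixing a unit $u$ and an $x \in \G^u$, there is at most one $y_u \in \supp(g)$ with $r(y_u) = u$ and at most one $z_{s(x)} \in \supp(f)$ with $s(z_{s(x)}) = s(x)$, so that $(g \ast \phi_\alpha \ast f)(x)$ reduces to the single term $g(y_u)\,\phi_\alpha\!\left(y_u^{-1} x z_{s(x)}^{-1}\right) f(z_{s(x)})$, whence $|(g \ast \phi_\alpha \ast f)(x)| \le \|g\|_\infty \|f\|_\infty\, \phi_\alpha\!\left(y_u^{-1} x z_{s(x)}^{-1}\right)$. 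The crux is then to control $\phi_\alpha\!\left(y_u^{-1} x z_{s(x)}^{-1}\right)$ by $\phi_\alpha(x)$. Writing $b := y_u^{-1} x z_{s(x)}^{-1}$ and using $x = y_u\, b\, z_{s(x)}$ together with subadditivity of $l_S$, one gets $l_S(b) \ge l_S(x) - l_S(y_u) - l_S(z_{s(x)}) \ge l_S(x) - 2\rho$, where $\rho := \sup\{ l_S(\gamma) : \gamma \in \supp(g) \cup \supp(f)\}$ is finite because $l_S$ is continuous and the supports are compact. As $\alpha \in (0,1)$, this yields the pointwise bound $\phi_\alpha(b) = \alpha^{l_S(b)} \le \alpha^{-2\rho}\,\phi_\alpha(x)$.

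Putting the two estimates together and integrating over the unit space gives
\begin{gather*}
\int_{\G^{(0)}} \sum_{x \in \G^u} |g \ast \phi_\alpha \ast f(x)|^p \, d\mu(u) \\
\le \|g\|_\infty^p \|f\|_\infty^p\, \alpha^{-2\rho p} \int_{\G^{(0)}} \sum_{x \in \G^u} |\phi_\alpha(x)|^p \, d\mu(u) = \|g\|_\infty^p \|f\|_\infty^p\, \alpha^{-2\rho p}\, |\phi_\alpha|_p^p,
\end{gather*}
which is finite exactly because of the hypothesis $\phi_\alpha \in \mathcal{L}_\mu^p(\G)$. I expect the main obstacle to be the length estimate of the previous paragraph: the tempting inequality $\phi_\alpha\!\left(y_u^{-1} x z_{s(x)}^{-1}\right) \le \phi_\alpha(x)$ is not valid pointwise, since multiplying by the fixed short elements $y_u^{-1}$ and $z_{s(x)}^{-1}$ can shorten $x$ through cancellation and thereby \emph{increase} $\phi_\alpha$; the correct move is to bound $l_S(b)$ from below by $l_S(x) - 2\rho$, absorbing the bounded defect into the harmless constant $\alpha^{-2\rho}$. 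Everything else is the same bisection bookkeeping already used in \cref{lem: GNS representation associated with positive definite B_0 function is B_0 representation}.
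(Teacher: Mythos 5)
Your proposal is correct, and structurally it is the same argument as the paper's: pass to the fundamental sequence $\{\sigma(f_n)\}_n$, use the convolution identity $\psi = \overline{g} \ast \phi_\alpha \ast \overline{f}^{\ast}$ from \cref{lem: GNS representation associated with positive definite B_0 function is B_0 representation}, reduce to $f,g$ supported on single bisections, collapse the convolution to the single term $g(y_u)\,\phi_\alpha\big(y_u^{-1}xz_{s(x)}^{-1}\big)f(z_{s(x)})$, and then compare $\phi_\alpha\big(y_u^{-1}xz_{s(x)}^{-1}\big)$ with $\phi_\alpha(x)$. The one point where you deviate is exactly the point where the paper's own proof is flawed, and your version is the correct one. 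The paper asserts that $l_S(yxz) \le l_S(y)+l_S(x)+l_S(z)$ together with $\alpha \in (0,1)$ yields $\phi_\alpha(yxz) \le \phi_\alpha(y)\phi_\alpha(x)\phi_\alpha(z) \le \phi_\alpha(x)$; but since $t \mapsto \alpha^{t}$ is \emph{decreasing}, subadditivity of $l_S$ gives the reverse inequality $\phi_\alpha(yxz) \ge \phi_\alpha(y)\phi_\alpha(x)\phi_\alpha(z)$, and the pointwise bound $\phi_\alpha\big(y_u^{-1}xz_{s(x)}^{-1}\big) \le \phi_\alpha(x)$ implicitly used in the paper's final display is false in general, precisely for the cancellation reason you identify (e.g.\ in a group, with $y_u = x$ and $z_{s(x)}$ a unit, the left side is $1$ while the right side is $\alpha^{l_S(x)}$). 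Your repair --- the lower bound $l_S(b) \ge l_S(x) - 2\rho$ with $\rho$ the maximal length on the compact sets $\supp(f) \cup \supp(g)$, absorbing the defect into the constant $\alpha^{-2\rho p}$ --- is sound, and the constant is harmless since only membership in $\mathcal{L}_{\mu}^{p}(\G)$ is at stake. For completeness: an alternative repair closer to the paper's final display is to note that for fixed $u$ the map $x \mapsto y_u^{-1}xz_{s(x)}^{-1}$ is injective on $\G^u \cap \G_{s(\supp(f))}$ (because $r$ and $s$ are injective on $\supp(g)$ and $\supp(f)$) and then use invariance of $\mu$ to transport the sum from the fibre over $s(y_u)$ back to $u$; your route has the small additional merit of not needing invariance of $\mu$ at all.
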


\begin{lemma} \label{lem: characterization of when the positive definite functions in question can be extended to the hyperbolic groupoid ideal completions}
	Let $2 \leq p < \infty$ and fix $\alpha \in (0,1)$. The following are equivalent:
	\begin{itemize}
		\item[(1)] $\omega_{\phi_\alpha}$ can be extended to a state on $C_{\mathcal{L}_{\mu}^{p} (\G), \mu}^{*}(\G)$;
		\item[(2)] $\sup_{k \in \N} | \phi_\alpha \chi_k |_p (k+1)^{-1} < \infty$;
		\item[(3)] The map $x \mapsto \phi_\alpha(x) (1 + l_S (x))^{-1-2/p}$ belongs to $\mathcal{L}_{\mu}^{p} (\G)$;
		\item[(4)] For any $\beta \in (0,1)$, the map $x \mapsto \phi_\alpha(x) \beta^{l_S (x)} = \phi_{\alpha \beta} (x)$ belongs to $\mathcal{L}_{\mu}^{p} (\G)$.
	\end{itemize}
	\begin{proof}
		The proof is similar to that of \cite[Theorem 3.1]{Okayasu:FreeGroupC*-algebrasAssociatedWithLP}. 
		
		$(1) \implies (2)$: Assume $\omega_{\phi_\alpha}$ can be extended to a state on $C_{\mathcal{L}_{\mu}^{p} (\G), \mu}^{*}(\G)$. Thus, for all $f \in C_c (\G)$, we have $| \omega_{\phi_\alpha} (f) | \leq \|f\|_{\mathcal{L}_{\mu}^{p} (\G),\mu}$. Put $f := {\phi_\alpha}^{p-1} \chi_k$, for a fixed $k \in \N$. Then $f \in C_c (\G)$ and $| \omega_{\phi_\alpha} (f) | = | \phi_\alpha \chi_k |_{p}^{p}$. If $\sigma$ is any $\mathcal{L}_{\mu}^{p} (\G)$-representation, then by \cref{lem: second technical lemma for hyperbolic groupoid C*-algebras}, we have that $$ \left\| \sigma_\mu (f) \right\| \leq 2C (k+1) | f |_q . $$ Therefore, $\|f\|_{\mathcal{L}_{\mu}^{p} (\G),\mu} \leq 2C (k+1) |f|_{q}$, where $q$ is the Hölder conjugate of $p$, and so $$ |\phi_\alpha \chi_k |_{p}^{p} = | \omega_{\phi_\alpha} (f) | \leq \|f\|_{\mathcal{L}_{\mu}^{p} (\G),\mu} \leq 2 C (k+1) |f|_{q} = 2 C (k+1) |\phi_\alpha \chi_k |_{p}^{p-1}.$$ That is, $| \phi_\alpha \chi_k |_p \leq 2 C (k+1)$, so that $\sup_{k} | \phi_\alpha \chi_k |_p (k+1)^{-1} < \infty$.
		
		$(2) \implies (3)$: Assume  $\sup_{k} | \phi_\alpha \chi_k |_p (k+1)^{-1} < \infty$. We have that
		\begin{align*}
			\int_{\G} |\phi_\alpha(x) (1+ l_S (x))^{-1 - 2/p}|^p \, d \nu (x) &= \int_{\G^{(0)}} \sum_{x \in \G^u} | \phi_\alpha(x)|^p (1 + l_S (x))^{-p-2} \, d \mu (u) \\
			&\leq \sum_{k = 0}^{\infty} \int_{\G^{(0)}} \sum_{x \in \G^u } |\phi_\alpha (x)|^p (1 + k)^{-p-2} \chi_k (x) \, d \mu(u) \\
			&= \sum_{k = 0}^{\infty} (1 + k)^{-2} \int_{\G^{(0)}} \sum_{x \in \G^u } \big|\phi_\alpha(x) (1 + k)^{-1} \chi_k (x) \big|^p \, d \mu(u) \\
			&= \sum_{k = 0}^{\infty} (1+k)^{-2} \big( |\phi_\alpha \chi_k|_p (1 + k)^{-1} \big)^{p} \\
			&\leq \big( \sup_k | \phi_\alpha \chi_k |_p (1+k)^{-1} \big)^{p} \sum_{k = 0}^{\infty} \frac{1}{(k+1)^2} < \infty.
		\end{align*}
		
		$(3) \implies (4)$: Since $\beta \in (0,1)$, we have that $\beta^{k} < (1+ k)^{-1 -2/p}$ for all sufficiently large $k \in \N$. The implication follows from this.
		
		$(4) \implies (1)$: Let $\sigma_\beta$ be the GNS representation corresponding to $\phi_{\alpha \beta}$ with bounded Borel section $\xi_{\beta}$ satisfying $$ \phi_{\alpha \beta} (x) = \langle \sigma_\beta (x) \xi_\beta (s(x)) , \xi_\beta (r(x)) \rangle_{H_{\sigma_\beta} (r(x))},$$ for $x \in \G$. Since $$1 = | \phi_{\alpha \beta} (u) |^2 = \|\xi_\beta (u)\|_{H_{\sigma_\beta} (u)}^{2},$$ for any $u \in \G^{(0)}$, it follows that $\xi_\beta$ is a unit vector in $H_{\sigma_\beta, \mu}$. Moreover, by \cref{lem: technical lemma 6 concerning Lp-representations from positive definite Lp functions}, $\sigma_\beta$ is an $\mathcal{L}_{\mu}^{p} (\G)$-representation. It follows that for every $\beta \in (0,1)$, the map given for any $f \in C_c (\G)$ by $$\omega_{\phi_{\alpha \beta}} (f) = \int_{\G} f \phi_{\alpha \beta} \, d\nu = \int_{\G} f \phi_{\alpha \beta} \, d\nu_0 = \langle (\sigma_{\beta})_\mu (f) \xi_{\beta}, \xi_{\beta} \rangle ,$$ can be extended to a state on $C_{\mathcal{L}_{\mu}^{p} (\G),\mu}^{*}(\G)$. Above we have used that the measure $\mu$ is invariant, so $d \nu = d \nu^{-1} = d \nu_0$. Since $\lim_{\beta \to 1} \phi_{\alpha \beta} = \phi_{\alpha}$ uniformly on compact sets, the Lebesgue dominated convergence theorem gives that for any $f \in C_c (\G)$, we have $$ \omega_{\phi_{\alpha}} (f) = \int_{\G} f \phi_{\alpha} \, d \nu = \lim_{\beta \to 1} \int_{\G} f \phi_{\alpha \beta} \, d \nu = \lim_{\beta \to 1} \omega_{\phi_{\alpha \beta}} (f) .$$ In particular, $$ | \omega_{\phi_{\alpha}} (f) | = \lim_{\beta \to 1} | \omega_{\phi_{\alpha \beta}} (f) | \leq \|f\|_{\mathcal{L}_{\mu}^{p} (\G), \mu} ,$$ for every $f \in C_c (\G)$, and therefore, $\omega_{\phi_{\alpha}}$ can be extended to a state on $C_{\mathcal{L}_{\mu}^{p} (\G),\mu}^{*}(\G)$.
	\end{proof}
\end{lemma}

Recall that since $S$ is a generating set and $\G$ is an étale groupoid, we may cover $S$ by a finite number of bisections, say $R$, and then it follows that for any $k \in \N_0$, we have $\sup_{u \in \G^{(0)}} |W_k \cap \G^u| \leq R^k$. Recall also that we are assuming a lower bound as well, namely that $\inf_{u \in \G^{(0)}} |B_k \cap \G^{u}| \geq D {R^{\prime}}^k$. 

\begin{proof}[Proof of \cref{thm: there exists extoic ideal completions of a hyperbolic groupoid with appropriate growth conditions}]
	Let $R^{\prime}$ and $R$ be as above. First of all, since the invariant measure $\mu$ has full support and $B_c (\G) \subset \mathcal{L}_{\mu}^{p} (\G)$, it follows by \cref{prop: reduced C*-algebra is the one associated with the ideal B_c (G)} and \cref{lem: ideal containment induces quotient map of c*-algebras} that $$ \|\cdot\|_{\mathcal{L}_{\mu}^{p} (\G),\mu} \geq \|\cdot\|_{B_c (\G) , \mu} = \|\cdot\|_{r},$$ for any $p \in [2, \infty)$. According to \cref{def: exotic groupoid C*-algebra}, $C_{\mathcal{L}_{\mu}^{p} (G), \mu}^{\ast}(\G)$ is a groupoid $\rm C^*$-algebra. 
	
	Now let $\alpha \in (0,1)$ be given. Then
	\begin{align*}
		\int_{\G} \phi_\alpha \, d \nu &= \int_{\G^{(0)}} \sum_{x \in \G^u} \phi_\alpha (x) \, d \mu(u) \\
		&= \int_{\G^{(0)}} \sum_{k = 0}^{\infty} |W_k \cap \G^u| \alpha^k \, d \mu(u) \\
		&\leq \sum_{k = 0}^{\infty} (R \alpha)^k < \infty ,
	\end{align*}
	if $\alpha < 1/R$. That is, $\phi_\alpha \in \mathcal{L}_{\mu}^{1} (\G)$ whenever $\alpha < 1/R$. At the same time, for any $N \in \N$, we have 
	\begin{align*}
		&\int_{\G^{(0)}} \sum_{k = 0}^{N} |W_k \cap \G^u | \alpha^{k} \, d \mu(u) \\
		&= 1 + \int_{\G^{(0)}} \sum_{k = 1}^{N} (|B_k \cap \G^u | - |B_{k-1} \cap \G^u |) \alpha^{k} \, d \mu(u) \\
		&\geq 1 - \alpha + \int_{\G^{(0)}} \sum_{k = 1}^{N} |B_k \cap \G^u | (\alpha^{k} - \alpha^{(k+1)}) \, d \mu(u) \\
		&= 1 - \alpha + (1 - \alpha) \int_{\G^{(0)}} \sum_{k = 1}^{N} |B_k \cap \G^u | \alpha^{k} \, d \mu(u) \\
		&\geq 1 - \alpha + (1 - \alpha) D \sum_{k = 1}^{N} (R^{\prime} \alpha)^{k}.
	\end{align*}
	So, if $\alpha \geq 1/R^{\prime}$, then $\phi_{\alpha} \notin \mathcal{L}_{\mu}^{1} (\G)$, because for such $\alpha$, we have
	\begin{align*}
		D(1 - \alpha )N &\leq D(1 - \alpha) \sum_{k = 1}^{N} (R^{\prime} \alpha)^{k}  \\
		&\leq \int_{\G^{(0)}} \sum_{k = 0}^{N} |W_k \cap \G^u | \alpha^{k} \, d \mu(u) \\
		&\leq \int_{\G^{(0)}} \sum_{k = 0}^{\infty} |W_k \cap \G^u | \alpha^{k} \, d \mu(u) = \int_{\G} \phi_{\alpha} (x) \, d\nu(x),
	\end{align*}
	for all $N \in \N$. Due to the above analysis, we see that $$ \alpha_0 := \sup\{ \alpha \in (0,1) \mid \phi_\alpha \in \mathcal{L}_{\mu}^{1} (\G) \} ,$$ lies in $(0,1)$. Now let $p \geq 2$ be given. We claim that $\phi_{(\alpha_0 \beta)^{1/p}} \in \mathcal{L}_{\mu}^{q} (\G)$ for all $\beta \in (0,1)$ if and only if $q \geq p$; if this is true, then it follows by \cref{lem: characterization of when the positive definite functions in question can be extended to the hyperbolic groupoid ideal completions} that for $q \geq 2$, $\omega_{\phi_{{\alpha_0}^{1/p}}}$ extends to a state on $C_{\mathcal{L}_{\mu}^{q} (\G), \mu}^{\ast}(\G)$ if and only if $q \geq p$. Consequently, for every $2 \leq q < p < \infty$, the canonical $*$-surjection $$ C_{\mathcal{L}_{\mu}^{p} (\G), \mu}^{\ast}(\G) \twoheadrightarrow C_{\mathcal{L}_{\mu}^{q} (\G), \mu}^{\ast}(\G), $$ is not injective, proving the theorem. 
	
	To establish the claim, notice first that for every $\beta \in (0,1)$, we have that $$ \phi_{(\alpha_0 \beta)^{1/p}} \in \mathcal{L}_{\mu}^{q} (\G) \iff \phi_{(\alpha_0 \beta)^{q/p}} \in \mathcal{L}_{\mu}^{1} (\G).$$ Now if $q \geq p$, then $(\alpha_0 \beta)^{q/p} \leq \alpha_0 \beta < \alpha_0$, so that by definition of $\alpha_0$, we have that $\phi_{(\alpha_0 \beta)^{q/p}} \in \mathcal{L}_{\mu}^{1} (\G)$, or equivalently, $\phi_{(\alpha_0 \beta)^{1/p}} \in \mathcal{L}_{\mu}^{q} (\G)$. If $q < p$, then ${\alpha_0}^{q/p} > \alpha_0$, and there exists $\beta \in (0,1)$ such that $(\alpha_0 \beta)^{q/p} > \alpha_0$. Again, by definition of $\alpha_0$, we have that $\phi_{(\alpha_0 \beta)^{q/p}} \notin \mathcal{L}_{\mu}^{1} (\G)$, or equivalently, $\phi_{(\alpha_0 \beta)^{1/p}} \notin \mathcal{L}_{\mu}^{q} (\G)$.
\end{proof}

\begin{example} \label{ex: Non-elememtary hyperbolic transformation groups have exotic ideal completions}
	Let $\Gamma$ be a finitely generated non-elementary hyperbolic group such that the canonical length function is conditionally negative definite. Suppose that $\Gamma$ acts minimally and non-amenably on a compact Hausdorff space $X$, and suppose moreover that there is an invariant measure $\mu$ on $X$ with respect to this action. In \cite{HjortAndMolberg:FreeContinuousActionsOnZeroDimensionalSpaces}, Hjort and Molberg showed that any countably infinite discrete group admits a free action on the Cantor space which has an invariant probability measure, and in \cite{Elek:FreeMinimalActionsOfCountableGroupsWithInvariantProbabilityMeasures}, Elek improved upon this by showing that one can take the action to be minimal. Therefore, such triples of groups, spaces and actions do exist. By minimality of the action and invariance of the measure $\mu$ under the action, the measure $\mu$ has in fact full support. The transformation groupoid $\Gamma \rtimes X$ has compact open generating set $F \times X$, where $F \subset \Gamma$ is a finite symmetric set of elements generating the group $\Gamma$. The metric spaces given by the Cayley graphs of $\Gamma \rtimes X$ are isomorphic (as metric spaces) to the Cayley graph of $\Gamma$, hence are all hyperbolic with respect to the same hyperbolicity constant. Thus $\Gamma \rtimes X$ is a metrically hyperbolic groupoid. Moreover, it is easy to see with our assumptions that the length function associated with $F \times X$ is conditionally negative definite. By \cite{Coornaert:BoundOnBallsInHyperbolicGroup} (see also \cite{Valinuas:ANoteOnGrowthOfHyperbolicGroups}) there exist constants $R,D > 1$ such that for all $k \in \N$, we have $D^{-1} R^k \leq |B_k \cap \Gamma \times \{x\}| \leq D R^k$, for every $x \in X$. It follows by \cref{thm: there exists extoic ideal completions of a hyperbolic groupoid with appropriate growth conditions} that for every $2 \leq q < p < \infty$, the canonical surjection 
	$$ C_{\mathcal{L}_{\mu}^{p} (\Gamma \rtimes X), \mu}^{*}(\Gamma \rtimes X) \twoheadrightarrow C_{\mathcal{L}_{\mu}^{q} (\Gamma \rtimes X), \mu}^{*}(\Gamma \rtimes X) ,$$ is not injective, and consequently the collection $\{ C_{\mathcal{L}_{\mu}^{p} (\Gamma \rtimes X), \mu}^{*}(\Gamma \rtimes X) \}_{p \in (2, \infty)}$ form an uncountable collection of exotic groupoid $\rm C^*$-algebras associated to $\Gamma \rtimes X$.
\end{example}

\begin{example} \label{ex: exotic crossed product from actions of the free group on the sphere}
	Consider the special orthogonal group, $\mathrm{SO} (3, \R)$, viewed as a discrete group. It acts on the unit ball $B_1 \subseteq \R^3$ by matrix multiplication and it is well-known that the Lebesgue measure is invariant under this action, and has full support. Let $R_z$ be the rotation by $\arccos(1/3)$ about the $z$-axis and $R_x$ the rotation by $\arccos(1/3)$ about the $x$-axis. Then it is a standard fact that $\mathbb{F}_2 \cong \langle R_x , R_z \rangle \leq \mathrm{SO}(3,\R)$, and so we obtain an action of $\mathbb{F}_2$ on $B_1$ which admits an invariant full support measure. 
	Note that this action of $\mathbb{F}_2$ on $B_1$ is not free since for example $R_z$ fixes any point on the $z$-axis.  As in \cref{ex: Non-elememtary hyperbolic transformation groups have exotic ideal completions}, we see that the transformation groupoid $\mathbb{F}_2 \rtimes B_1$ is metrically hyperbolic and the canonically associated length function is conditionally negative definite. Therefore, it follows by \cref{thm: there exists extoic ideal completions of a hyperbolic groupoid with appropriate growth conditions} that $\{ C_{\mathcal{L}_{\mu}^{p} (\mathbb{F}_2 \rtimes B_1), \mu}^{*}(\mathbb{F}_2 \rtimes B_1) \}_{p \in (2, \infty)}$ form an uncountable collection of exotic groupoid $\rm C^*$-algebras associated with the transformation groupoid $\mathbb{F}_2 \rtimes B_1$.
\end{example}

\begin{remark}
	Taking $\Gamma$ to be a finitely generated non-elementary hyperbolic group such that the canonically associated length function is conditionally negative definite, and $X$ to be a single point, then the transformation groupoid is just the group $\Gamma$. Then \cref{ex: Non-elememtary hyperbolic transformation groups have exotic ideal completions} shows that the family $\{ C_{\ell^p (\Gamma)}^{*}(\Gamma) \}_{p \in (2, \infty)}$ forms an uncountable collection of different exotic group $\rm C^*$-algebras, generalizing Okayasu's main result in \cite{Okayasu:FreeGroupC*-algebrasAssociatedWithLP} to said groups, though this also follows from results by Samei and Wiersma in \cite{SameiAndWiersma:ExoticC*AlgebrasOfGeometricGroups}.
\end{remark}

\begin{remark}
	Taking $\Gamma$ to be the free group on two generators acting freely and minimally on a second-countable compact Hausdorff space such that there exists an invariant probability measure, then \cref{ex: Non-elememtary hyperbolic transformation groups have exotic ideal completions} together with \cref{thm: there exists extoic ideal completions of a hyperbolic groupoid with appropriate growth conditions} provides an alternative way to prove \cite[Theorem 3.6]{ExelPittsZarikian:ExoticIdealsInFreeTransformationGroupC*Algebras} when the action is minimal. 
\end{remark}
\printbibliography

\end{document}